\newcounter{dummy} \numberwithin{dummy}{section}
\newtheorem{proposition}[dummy]{Proposition}
\newtheorem{lemma}[dummy]{Lemma}
\newtheorem{theorem}[dummy]{Theorem}
\newtheorem{corollary}[dummy]{Corollary}
\theoremstyle{remark}
\newtheorem{remark}[dummy]{Remark}
\newcommand{\what}{\widehat}
\newcommand{\dd}{\mathrm{d}}
\newcommand{\SL}{\mathrm{SL}}
\newcommand{\bH}{\mathbb{H}}
\newcommand{\bR}{\mathbb{R}}
\newcommand{\bZ}{\mathbb{Z}}
\newcommand{\cM}{\mathcal{M}}
\newcommand{\QM}{\mathcal{QM}}
\newcommand{\RQM}{\mathcal{RQM}}
\newcommand{\cL}{\mathcal{L}}
\newenvironment{red}{\relax\color{red}}{\hspace*{.5ex}\relax}
\newenvironment{blue}{\relax\color{blue}}{\hspace*{.5ex}\relax}
\newcommand{\ber}{\begin{red}}
\newcommand{\er}{\end{red}}
\newcommand{\beb}{\begin{blue}}
\newcommand{\eb}{\end{blue}}
\title{Algebraic proof of modular form inequalities for optimal sphere packings}
\author{Seewoo Lee}
\date{}
\begin{document}

\begin{abstract}
We give algebraic proofs of Viazovska and Cohn-Kumar-Miller-Radchenko-Viazovska's modular form inequalities for 8 and 24-dimensional optimal sphere packings.
\end{abstract}

\maketitle

\section{Introduction}
\label{sec:intro}

In their celebrated papers \cite{viazovska2017sphere,cohn2017sphere}, Viazovska and Cohn--Kumar--Miller--Radchenko--Viazovska proved the optimality of $E_8$ and Leech lattice sphere packings in dimensions $8$ and $24$, respectively.
Their proof is based on the Cohn and Elkies' linear programming bound \cite{cohn2003new}, which states that the existence of a function on $\mathbb{R}^{d}$ satisfying certain inequalities implies an upper bound on the sphere packing densities.
The conjectural \emph{magic functions} in dimensions $8$ and $24$ are constructed by Viazovska and Cohn et al. as integral transforms of certain (quasi)modular forms.
When the dimension is $8$, the required inequalities for the function and its Fourier transform reduce to the following inequalities on modular forms.
\begin{theorem}[Viazovska {\cite[Theorem 4]{viazovska2017sphere}}]
    \label{thm:d8ineq}
    Define\footnote{Note that the original inequality is written in terms of $\psi_I(z) = z^2\psi_S(-1/z)$, but we found that the above form is more convenient for us to work with.}
    \begin{align}
        \phi_0 &= 1728 \frac{(E_2 E_4 - E_6)^2}{E_4^3 - E_6^2}, \label{eqn:d8forig} \\
        \psi_S &= -128 \left( \frac{\Theta_3^4 + \Theta_2^4}{\Theta_4^8} + \frac{\Theta_2^4 - \Theta_4^4}{\Theta_3^8}\right), \label{eqn:d8gorig}
    \end{align}
    Then
    \begin{align}
        \phi_0(it) - \frac{36}{\pi^2}\psi_S(it) &> 0, \label{eqn:d8ineq1} \\
        \phi_0(it) + \frac{36}{\pi^2}\psi_S(it) &> 0, \label{eqn:d8ineq2}
    \end{align}
    for all $t > 0$.
\end{theorem}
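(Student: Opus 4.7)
The plan is to recast the two inequalities as polynomial-positivity statements that can be verified by an explicit Positivstellensatz certificate. The key obstacle is the quasi-modularity of $E_2$, which is exactly what produces the factor $\pi^2$ in the statement; I would handle it by splitting $E_2 = \widetilde{E}_2 + \tfrac{3}{\pi\,\mathrm{Im}(z)}$, where $\widetilde{E}_2$ is the non-holomorphic modular completion of weight $2$. Expanding the square in $\phi_0 = (E_2 E_4 - E_6)^2/\Delta$ and writing $w := 1/(\pi t)$ yields
\[
\phi_0(it) \;=\; A_0(it) \,+\, 6w\, A_1(it) \,+\, 9 w^2 A_2(it),
\]
with $A_0 = (\widetilde{E}_2 E_4 - E_6)^2/\Delta$, $A_1 = E_4(\widetilde{E}_2 E_4 - E_6)/\Delta$, and $A_2 = E_4^2/\Delta$. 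Each $A_i$ is a genuine weight-$0$ modular function for $\SL_2(\bZ)$, and both $A_0$ and $A_2$ are non-negative on the imaginary axis.

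Since the $A_i$ (being $\SL_2(\bZ)$-modular) and $\psi_S$ (being modular of weight $-2$ for $\Gamma(2)$) all lie in the function field of the level-$2$ modular curve, I would next express everything in terms of the Hauptmodul $\lambda := \Theta_2^4/\Theta_3^4$, which on the imaginary axis defines a strictly decreasing bijection $t \mapsto \lambda(it)$ of $(0,\infty)$ onto $(0,1)$. Using $\Theta_2^4 + \Theta_4^4 = \Theta_3^4$ together with the classical expressions of $E_4, E_6, \Delta$ in thetas, multiplying through by the positive weight-$2$ form $\Theta_3^4(it)$ turns each $\Theta_3^4 A_i$ into a rational function of $\lambda$, and a direct calculation gives $\Theta_3^4 \psi_S = -128\,\lambda(2\lambda^2 - 5\lambda + 5)/(1-\lambda)^2$. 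Both inequalities then take the form $P_\pm(\lambda, w) > 0$ for two explicit polynomials (after clearing denominators) along the curve $\Gamma = \{(\lambda(it), 1/(\pi t)) : t > 0\}$ inside the rectangle $(0,1) \times (0,\infty)$.

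To finish algebraically, I would try to prove the stronger statement that $P_\pm(\lambda, w) > 0$ on the entire rectangle; since $P_\pm$ is quadratic in $w$ with coefficients polynomial in $\lambda$, this reduces (either via discriminant analysis or directly) to constructing an explicit Positivstellensatz certificate of the form $\sum R_i^2 + \lambda \sum S_j^2 + (1-\lambda) \sum T_k^2 + w \sum U_\ell^2$ with rational polynomial $R_i, S_j, T_k, U_\ell$. The principal obstacle is that the stronger two-variable inequality may fail off the physical curve $\Gamma$; in that case, I would exploit additional algebraic relations tying $w$ and $\lambda$ (e.g.\ coming from the quasi-modular transformation law of $\widetilde{E}_2$ along the imaginary axis), or restrict to a subregion of the rectangle containing $\Gamma$ on which positivity holds. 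Locating the explicit decomposition --- most likely via a semidefinite-programming search followed by exact symbolic verification --- should constitute the bulk of the work.
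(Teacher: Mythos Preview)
Your plan has a genuine gap at the step where you assert that the $A_i$ ``lie in the function field of the level-$2$ modular curve.'' After the split $E_2 = \widetilde{E}_2 + 3w$, the coefficients $A_0 = (\widetilde{E}_2 E_4 - E_6)^2/\Delta$ and $A_1 = E_4(\widetilde{E}_2 E_4 - E_6)/\Delta$ still contain the \emph{non-holomorphic} completion $\widetilde{E}_2$. This object transforms with weight $2$, but it is not meromorphic, so $A_0$ and $A_1$ are almost-holomorphic invariants, not rational functions of $\lambda$. Concretely, $E_2, E_4, E_6$ are algebraically independent, so $E_2$ (hence $\widetilde{E}_2$ restricted to the imaginary axis) is transcendental over $\mathbb{C}(E_4, E_6) \supset \mathbb{C}(\lambda)$; your variable $w$ only peels off the explicit $3/(\pi t)$ piece and does not make the remaining $\widetilde{E}_2$ algebraic in $\lambda$. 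To continue you would need a third coordinate for $E_2$ itself, and then the physical locus $\Gamma$ becomes a curve in a three-dimensional region with no polynomial constraint tying $E_2$ to $(\lambda, w)$, so the hoped-for two-variable Positivstellensatz reduction collapses. (Minor: the $A_i$ have weights $0, -2, -4$, not all $0$.)

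The paper avoids this obstruction by never trying to realize the $\pi^2$ algebraically. It sets $F = \Delta\phi_0$, $G = -2\Delta\psi_S$, shows that $t \mapsto F(it)/G(it)$ is strictly decreasing, and computes $\lim_{t\to 0^+} F(it)/G(it) = 18/\pi^2$, the $\pi$ entering only through the transformation law of $E_2$ under $z\mapsto -1/z$. The monotonicity is the \emph{homogeneous} statement $F'G - FG' > 0$, proved by one Serre derivative: the second-order equations $\partial_{10}^2 F = \tfrac{5}{6}E_4 F + a\Delta X_{4,2}$ and $\partial_{10}^2 G = \tfrac{5}{6}E_4 G - b\Delta H_2$ (with $a,b>0$) give $\partial_{22}(F'G - FG') = \Delta(aX_{4,2}G + bH_2 F) > 0$, and then an ``anti-Serre-derivative preserves positivity'' lemma (Corollary~\ref{cor:serrepos}) finishes.
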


(See Section \ref{sec:prelim} for the definitions of the terms appearing above.)
Similarly, the case of dimension $24$ reduces to the following three inequalities.
Here we abuse notations so that two $\psi_S$ in \eqref{eqn:d8gorig} and \eqref{eqn:d24gorig} are different.

\begin{theorem}[Cohn et al. {\cite[Appendix A]{cohn2017sphere}}]
    \label{thm:d24ineq}
    Define
    \begin{align}
        \varphi &= -\frac{49 E_2^2 E_4^3 - 25 E_2^2 E_6^2 - 48 E_2 E_4^2 E_6 - 25 E_4^4 + 49 E_4 E_6^2}{\Delta^2}, \label{eqn:d24forig} \\
        \psi_S &= - \frac{\Theta_{2}^{20}(2\Theta_{2}^{8} + 7\Theta_{2}^{4} \Theta_{4}^{4} + 7\Theta_{4}^{8})}{\Delta^2}. \label{eqn:d24gorig}
    \end{align}
    Then for all $t > 0$,
    \begin{align}
        \varphi(it) + \frac{432}{\pi^2} \psi_S(it) &< 0, \label{eqn:d24ineq1}\\
        \varphi(it) - \frac{432}{\pi^2} \psi_S(it) &> 0. \label{eqn:d24ineq2}
    \end{align}
    and for all $t \geq 1$,
    \begin{equation}
        t^{10} \left(\varphi\left(\frac{i}{t}\right) - \frac{432}{\pi^2} \psi_S\left(\frac{i}{t}\right)\right) \geq \frac{725760}{\pi}e^{2\pi t}\left(t - \frac{10}{3 \pi}\right). \label{eqn:d24ineq3}
    \end{equation}
\end{theorem}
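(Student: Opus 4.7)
The plan is to reduce the transcendental inequalities \eqref{eqn:d24ineq1}--\eqref{eqn:d24ineq3} to sign statements for polynomial expressions in positive real quantities. At $\tau = it$ with $t > 0$, the theta null values $\Theta_2(it), \Theta_3(it), \Theta_4(it)$ and the discriminant $\Delta(it) = \frac{1}{256}(\Theta_2\Theta_3\Theta_4)^8(it)$ are positive reals satisfying the Jacobi identity $\Theta_3^4 = \Theta_2^4 + \Theta_4^4$. Setting $u = \Theta_2^4(it)$ and $v = \Theta_4^4(it)$, the Eisenstein series $E_4, E_6$ become explicit polynomials in $u, v$, so $\psi_S$ is a rational function of $u, v$ with positive denominator $\Delta^2$. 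The obstruction is the quasi-modular $E_2$ appearing in $\varphi$, which has no purely theta-based representation.

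I would handle $E_2$ by passing to the non-holomorphic completion $E_2^*(\tau) := E_2(\tau) - \frac{3}{\pi\,\mathrm{Im}(\tau)}$, a genuine weight-$2$ modular form. Substituting $E_2 = E_2^* + \frac{3}{\pi t}$ in $\varphi(it)$ produces terms proportional to $(E_2^*)^2$, $E_2^*/(\pi t)$, and $1/(\pi t)^2$. I expect the coefficient $432/\pi^2$ to be precisely the value needed so that $\frac{432}{\pi^2}\psi_S(it)$ absorbs the $\pi$-anomaly of $E_2^2$, making the combinations $\varphi \pm \frac{432}{\pi^2}\psi_S$ genuinely modular on the upper half-plane. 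Once everything is re-expressed in $u, v$, and $e_2 := E_2^*(it)$, inequalities \eqref{eqn:d24ineq1} and \eqref{eqn:d24ineq2} become polynomial sign statements. I would attempt a manifestly-signed factorization of each numerator in $u, v$ (possibly depending on $e_2$); failing a clean factorization, the Fricke involution reduces $t > 0$ to $t \geq 1$, where $q = e^{-2\pi t}$ is small and a rigorously truncated $q$-expansion with explicit tail bound suffices.

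For the asymptotic inequality \eqref{eqn:d24ineq3} I would first use the modular transformation to rewrite $t^{10}\bigl(\varphi(i/t) - \frac{432}{\pi^2}\psi_S(i/t)\bigr)$ in terms of values at $\tau = it$. The weight part transforms cleanly, while the $E_2^*$ completion produces an explicit $1/t$ correction; this $1/t$ term should reproduce the $-\frac{10}{3\pi}$ shift on the right-hand side. The dominant $e^{2\pi t}$ growth comes from the $q^{-1}$ pole of $1/\Delta^2$ (after the $q^{-2}$ poles cancel between $\varphi$ and $\psi_S$), and the resulting $q^{-1}$ coefficient must match $\frac{725760}{\pi}$ exactly. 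What then remains is to bound the sub-exponential tail, a holomorphic modular combination on $t \geq 1$, from below by zero, which I would attack with the same algebraic decomposition. The principal obstacle is to perform all three tasks simultaneously---cancel the $q^{-2}$ pole, match the $q^{-1}$ and $t\cdot q^{-1}$ coefficients, and maintain positivity of the residual series---within a single identity; this is where genuinely new algebraic ingenuity must enter.
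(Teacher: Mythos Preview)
Your central expectation is false, and this is the genuine gap. You write that $\frac{432}{\pi^2}$ should be ``precisely the value needed so that $\frac{432}{\pi^2}\psi_S(it)$ absorbs the $\pi$-anomaly of $E_2^2$, making the combinations $\varphi \pm \frac{432}{\pi^2}\psi_S$ genuinely modular.'' But $\varphi$ has weight $-8$ (numerator weight $16$, denominator $\Delta^2$ weight $24$) while $\psi_S$ has weight $-10$; no scalar multiple of one can combine with the other to give a modular object. Replacing $E_2$ by $E_2^* + 3/(\pi t)$ in $\varphi$ produces a term $9/(\pi^2 t^2)$ times the $E_2^2$-coefficient, and this carries an explicit $1/t^2$ that $\psi_S$ simply does not have. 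The constant $\frac{432}{\pi^2}$ is not a modular cancellation constant; it is the \emph{limit} $\lim_{t\to 0^+} F(it)/G(it)$ where $F=-\Delta^2\varphi$ and $G=-\Delta^2\psi_S$, and the $\pi^2$ arises only asymptotically from the $E_2$ transformation law. So no polynomial identity in $u,v,e_2$ of the sort you envisage exists, and your fallback (Fricke plus rigorous $q$-tail bounds) is precisely the numerical-analysis route the paper sets out to avoid.

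The paper's method is entirely different and worth knowing. For \eqref{eqn:d24ineq1} one shows $F(it)>0$ and $G(it)>0$ separately; the nontrivial part is the identity $\partial_{14}F = c\,X_{6,1}X_{12,1}$ (a product of extremal quasimodular forms with nonnegative Fourier coefficients), after which an anti-Serre-derivative positivity principle gives $F>0$. For \eqref{eqn:d24ineq2} one proves that $t\mapsto F(it)/G(it)$ is strictly decreasing with limit $432/\pi^2$ at $t\to 0^+$; monotonicity reduces to positivity of the Wronskian-type form $\cL_{1,0}=(\partial_{14}F)G-F(\partial_{14}G)$, and the key algebraic input is the pair of second-order differential equations $\partial_{14}^2 F=\tfrac{14}{9}E_4 F + c\Delta X_{8,2}$ and $\partial_{14}^2 G=\tfrac{14}{9}E_4 G$, which give $\partial_{30}\cL_{1,0}=c\Delta X_{8,2}G>0$. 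For \eqref{eqn:d24ineq3} the exponential is replaced by $1/\Delta(it)$ via $\Delta(it)<e^{-2\pi t}$, and a similar (but more intricate) monotonicity-plus-Serre-derivative argument reduces everything to elementary inequalities among divisor sums. None of this passes through $E_2^*$ or theta-polynomial factorization; the engine is Serre derivatives and the specific differential equations satisfied by $F$ and $G$.
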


In dimension $8$, the inequality \eqref{eqn:d8ineq1} is easier to prove than \eqref{eqn:d8ineq2} since we have $\phi_0(it) > 0$ and $- \psi_S(it) > 0$ separately.
However, \eqref{eqn:d8ineq2} is trickier and also looks unnatural in the sense that we need to compare two modular forms with different weights, and it also includes a bizarre constant $\frac{36}{\pi^2}$ (note that both $\phi_0$ and $\psi_S$ have rational Fourier coefficients).
The third inequality \eqref{eqn:d24ineq3} is much more complicated due to the non-modular terms (polynomial and exponential).
The original proofs in \cite{viazovska2017sphere,cohn2017sphere} use approximations of Fourier coefficients and numerical analysis, and it is natural to ask whether there's a more conceptual and simpler proof of the inequalities.
Recently, Romik proposed an alternative proof of \eqref{eqn:d8ineq1} and \eqref{eqn:d8ineq2} \cite{romik2023viazovska}, based on various identities among modular forms and special values of them at $z = i$ (which can be expressed in terms of $\pi$ and $\Gamma(1/4)$).

In this article, we provide simple and \emph{algebraic} proofs of theorems \ref{thm:d8ineq} and \ref{thm:d24ineq} which do not require any approximations or numerical analysis.
For \eqref{eqn:d8ineq2} and \eqref{eqn:d24ineq2}, the idea is simple: the \emph{ratio} of two forms with the input $z = it$ is a \emph{monotone function} in $t$, and the constants $\frac{36}{\pi^2}$ and $\frac{432}{\pi^2}$ naturally arise as \emph{limits} of the quotients as $t \to 0^+$ (see Proposition \ref{prop:d8decrease}, \ref{prop:d8limit}, \ref{prop:d24decrease}, and \ref{prop:d24limit}). 
Figure \ref{fig:d8graph} shows this phenomenon for $d = 8$, where $F$ and $G$ are certain normalizations of $\phi_0$ and $\psi_S$ respectively (see \eqref{eqn:d8f} and \eqref{eqn:d8g}).
We can compute the limit easily using the modular transformation laws of $F$ and $G$, and they coincide with the above constants.
Also, the monotonicity of the quotient is equivalent to
\[
    F'(it) G(it) - F(it) G'(it) > 0,
\]
(where the derivatives are defined as in \eqref{eqn:modformdiff}), which is now a \emph{homogeneous} inequality.
For \eqref{eqn:d24ineq3}, we first reduce it to a simpler inequality by replacing the exponential term with $\Delta$ (Lemma \ref{lem:discineq}).
After rewriting the inequality as \eqref{eqn:d24ineq3new2}, we observe that the resulting function is also monotone (Figure \ref{fig:d24harder_mono} and Proposition \ref{prop:d24harder}).

To prove these inequalities, we develop a general theory of quasimodular forms that are nonnegative on the imaginary axis or have nonnegative Fourier coefficients (Section \ref{sec:posqmf}).
In particular, we study how the positivity behaves under (anti-)(Serre-)derivatives, which are simple but surprisingly useful.
Combined with the differential equations that the modular forms satisfy (\eqref{eqn:d8ssf}, \eqref{eqn:d8ssg}, \eqref{eqn:d24ssf}, and \eqref{eqn:d24ssg}), our theory gives short proofs of the inequalities, by considering their Serre derivatives of appropriate weight (Corollary \ref{cor:serrepos}, Proposition \ref{prop:d8decrease}, Lemma \ref{lem:serrederF16}, Proposition \ref{prop:d24decrease}, and Proposition \ref{prop:d24harder}).
Along the way, we found that the modular forms are closely connected to the \emph{extremal quasimodular forms} defined by Kaneko and Koike \cite{kaneko2006extremal}.
The extremal forms are conjectured to have nonnegative Fourier coefficients, and can be defined recursively when the depth is less than $5$ \cite{kaneko2006extremal,grabner2020quasimodular}. As a byproduct, we prove their conjecture in the case of depth $1$ (Corollary \ref{cor:kkd1}).

\begin{figure}[t]
    \centering
    \includesvg[width=0.6\textwidth]{figs/d8}
    \caption{Graph of the quotient $F(it)/G(it)$ as a function in $t > 0$. Here $F = \Delta \phi_0$ and $G = -2\Delta \psi_S$.}
    \label{fig:d8graph}
\end{figure}

Our proofs are based on several nontrivial identities among modular forms.
These can be checked directly with SageMath \cite{sagemath}, since the rings of quasimodular forms are polynomial rings over certain generators, and we only need to check identities for the corresponding polynomials.
Also, we can easily factor a given modular form or write it as a (nonnegative) linear combination of other modular forms using SageMath,
for example \eqref{eqn:kkd1eq1}, \eqref{eqn:kkd1eq2}, \eqref{eqn:kkd1eq3}, \eqref{eqn:d8ssf}, \eqref{eqn:d8ssg}, \eqref{eqn:d24ssf}, and \eqref{eqn:d24ssg}.
Code is available in the GitHub repository \url{https://github.com/seewoo5/posqmf}, checking all the identities appearing in the paper (see Appendix \ref{subsec:appendix_sage} for more details).
The identities can also be checked by hand, where ``pen-and-paper proofs'' of the identities are given in Appendix \ref{subsec:penpaperproof}.

\subsection*{Acknowledgement}

We thank YoungJu Choie, Henry Cohn, Dan Romik, and Sug Woo Shin for the valuable comments and discussions.
Especially, Romik's talk on his work \cite{romik2023viazovska} at UC Berkeley RTG seminar motivated this project.
We also thank the reviewers for carefully reading this paper
and for providing detailed revision suggestions, which have been incorporated herein.

\section{Preliminaries}
\label{sec:prelim}

\subsection{Quasimodular forms}

For any function $f: \mathbb{H} \to \mathbb{C}$ defined on the complex upper half plane $\mathbb{H} = \{z \in \mathbb{C}: \Im z > 0\}$ and integer $k$, we define the action of $\gamma \in \mathrm{SL}_2(\mathbb{Z})$ as
\[
    (f|_{k} \gamma)(z) := (cz + d)^{-k}f \left(\frac{az + b}{cz + d}\right), \quad \gamma = \begin{bmatrix} a & b \\ c & d \end{bmatrix} \in \mathrm{SL}_{2}(\mathbb{Z}).
\]
We denote $T = \left[\begin{smallmatrix} 1 & 1 \\ 0 & 1\end{smallmatrix}\right]$ and $S = \left[\begin{smallmatrix} 0 & -1 \\ 1 & 0\end{smallmatrix}\right]$ that generate $\mathrm{SL}_{2}(\mathbb{Z})$.
Let $q = e^{2\pi i z}$ for $z \in \mathbb{H}$.
Define the Eisenstein series of weight $2, 4, 6$ as
\begin{align*}
    E_2 &= 1 - 24 \sum_{n\geq 1}\sigma_1(n)q^n, \\
    E_4 &= 1 + 240 \sum_{n \geq 1} \sigma_3(n)q^n, \\
    E_6 &= 1 - 504 \sum_{n \geq 1} \sigma_5(n)q^n,
\end{align*}
where $\sigma_{a}(n) := \sum_{d | n} d^a$.
The last two Eisenstein series $E_4, E_6$ are genuine modular forms.
However, $E_2$ is not a modular form - it is a \emph{quasi}modular form of weight 2 and level 1.
They obey the following transformation laws:
\begin{align}
    (E_{2}|_{2}S)(z) = z^{-2}E_2\left(-\frac{1}{z} \right) &= E_2(z) - \frac{6 i}{\pi z}, \label{eqn:e2trans}\\
    (E_{4}|_{4}S)(z) = z^{-4}E_4\left(-\frac{1}{z} \right) &= E_4(z), \label{eqn:e4trans} \\
    (E_{6}|_{6}S)(z) = z^{-6}E_6\left(-\frac{1}{z} \right) &= E_6(z). \label{eqn:e6trans}
\end{align}
The (graded) ring of quasimodular forms is isomorphic to a polynomial ring in 3 variables with generators $E_2$, $E_4$, and $E_6$ \cite[\S 5.1]{bruinier2008elliptic}, and we define the \emph{depth} of a quasimodular form as the highest degree of $E_2$ in its expression as a polynomial in $E_2$, $E_4$, and $E_6$.
It is closed under the differentiation
\begin{equation}
    \label{eqn:modformdiff}
    F' = DF := \frac{1}{2\pi i } \frac{\dd F}{\dd z} = q \frac{\dd F}{\dd q}, \quad \sum_{n} a_n q^n \mapsto \sum_{n} na_n q^n
\end{equation}
which increases depth by $1$ and weight by $2$.
For the Eisenstein series, we have Ramanujan's identities \cite[Proposition 15, p. 49]{bruinier2008elliptic}
\begin{align}
    E_2' &= \frac{E_2^2 - E_4}{12}, \label{eqn:e2der} \\
    E_4' &= \frac{E_2E_4 - E_6}{3}, \label{eqn:e4der} \\
    E_6' &= \frac{E_2 E_6 - E_4^2}{2}. \label{eqn:e6der}
\end{align}
We write $\QM_{w}^{s} = \QM_{w}^{s}(\SL_{2}(\mathbb{Z}))$  for the space of quasimodular forms of weight $w$ and depth $\leq s$, and $\cM_{w} := \QM_{w}^{0}$ for the space of genuine modular forms of weight $w$.
We denote $\Delta := (E_4^3 - E_6^2) / 1728$ for the discriminant form, which is the unique normalized cusp form of weight 12 and level $\SL_2(\bZ)$.
It can be expressed as an infinite product
\[
\Delta(z) = q \prod_{n \geq 1} (1 - q^n)^{24} = \eta(z)^{24}.
\]

\subsection{Jacobi's thetanull functions}

Jacobi's \emph{thetanull} functions are defined as
\begin{align*}
    \Theta_2(z) &= \theta_{10}(z) = \sum_{n \in \mathbb{Z}} q^{\frac{1}{2}(n + \frac{1}{2})^2} \\
    \Theta_3(z) &= \theta_{00}(z) = \sum_{n \in \mathbb{Z}} q^{\frac{n^2}{2}} \\
    \Theta_4(z) &= \theta_{01}(z) = \sum_{n \in \mathbb{Z}} (-1)^{n}q^{ \frac{n^2}{2}}.
\end{align*}
Here we choose $q^{1/2} = e^{\pi i z}$.
These are weight $1/2$ modular forms of level $\Gamma(2) = \{\gamma \in \SL_{2}(\mathbb{Z}): \gamma \equiv \left[\begin{smallmatrix}1&0\\0&1\end{smallmatrix}\right]\,(\mathrm{mod}\,2)\}$.
Although the half-integral weight modular forms are quite subtle objects, we can ignore such subtleties since we will only care about 4-th powers of these forms that are modular forms of weight $2$ and level $\Gamma(2)$.
We will denote them as $H_2$, $H_3$, and $H_4$, which admit Fourier expansions
\begin{align}
    H_2(z) &:= \Theta_2^4(z) = 2 \sum_{n \geq 0} r_4(2n + 1) q^{n + \frac{1}{2}}, \label{eqn:h2} \\
    H_3(z) &:= \Theta_3^4(z) = 1 + \sum_{n \geq 1}  r_4(n)q^{\frac{n}{2}}, \label{eqn:h3} \\
    H_4(z) &:= \Theta_4^4(z) = 1 + \sum_{n \geq 1} (-1)^{n} r_4(n)q^{\frac{n}{2}}, \label{eqn:h4}
\end{align}
where $r_4(k) := \# \{ \mathbf{x} \in \mathbb{Z}^{4}: \|\mathbf{x}\|^2 = k\}$.
They transform under $\mathrm{SL}_2(\mathbb{Z})$ as:
\begin{align}
    (H_{2}|_{2}T)(z) = -H_{2}(z), \quad (H_{3}|_{2}T)(z) &= H_{4}(z), \quad (H_{4}|_{2}T)(z) = H_{3}(z), \label{eqn:thetatransT} \\
    (H_{2}|_{2}S)(z) = -H_{4}(z), \quad (H_{3}|_{2}S)(z) &= -H_{3}(z), \quad (H_{4}|_{2}S)(z) = -H_{2}(z). \label{eqn:thetatransS}
\end{align}
Also, we have the Jacobi identity $H_{3} = H_{2} + H_{4}$.
These functions are related to the Eisenstein series and the discriminant form as (\cite[p. 29]{bruinier2008elliptic})
\begin{align}
    E_4 &= \frac{1}{2}(H_{2}^{2} + H_{3}^{2} + H_{4}^{2}) = H_{2}^{2} + H_{2}H_{4} + H_{4}^{2} \label{eqn:e4theta} \\
    E_6 &= \frac{1}{2} (H_{2} + H_{3})(H_{3} + H_{4}) (H_{4} - H_{2}) = \frac{1}{2}(H_2 + 2H_4)(2H_2 + H_4)(H_4 - H_2) \label{eqn:e6theta} \\
    \Delta &= \frac{1}{256} (H_{2}H_{3}H_{4})^2 = \frac{1}{256} H_2^2 (H_2 + H_4)^2 H_4^2. \label{eqn:disctheta}
\end{align}

\subsection{Serre derivative}

For a (positive) integer $k$ and a quasimodular form $F$, the weight $k$ Serre derivative $\partial_{k} F$ of $F$ is given by
\[
    \partial_{k}F := F' - \frac{k}{12} E_2 F.
\]
For $F \in \QM_{w}^{s}$, $\partial_{k}F$ is \emph{a priori} quasimodular form of weight $w + 2$ and depth $s+1$, but when $k = w - s$, Kaneko and Koike proved that it preserves depth of quasimodular forms, i.e. $\partial_{k} F \in \QM_{w+2}^{s}$ \cite[Proposition 3.3]{kaneko2006extremal}.
Serre derivative is equivariant under the $\mathrm{SL}_2(\mathbb{Z})$-action in the sense that
\[
\partial_k (F|_k \gamma) = (\partial_k F)|_{k+2}\gamma, \quad \forall \gamma \in \mathrm{SL}_2(\mathbb{Z}).
\]

Here is a list of special Serre derivatives of Eisenstein series and Jacobi thetanull functions:
\begin{equation}
    \label{eqn:serre_eis}
    \partial_{1}E_{2} = - \frac{1}{12} E_4, \quad
    \partial_{4}E_{4} = - \frac{1}{3} E_6, \quad
    \partial_{6}E_{6} = - \frac{1}{2} E_{4}^{2}
\end{equation}
and
\begin{align}
    \label{eqn:serre_jacobi}
    \partial_{2} (H_{2}) &= \frac{1}{6} (H_{2}^{2} + 2 H_{2} H_{4}), \,\,\,
    \partial_{2} (H_{3}) = \frac{1}{6} (H_{2}^{2} - H_{4}^{2}), \,\,\,
    \partial_{2} (H_{4}) = -\frac{1}{6} (2H_{2} H_{4} + H_{4}^{2}).
\end{align}
The equations \eqref{eqn:serre_eis} are the direct consequence of Ramanujan's formula \eqref{eqn:e2der}-\eqref{eqn:e4der}, and \eqref{eqn:serre_jacobi} follows from \cite[(A$\cdot$24)]{feigenbaum2021eigenfunctions}.
Serre derivative satisfies the product rule
\begin{equation}
    \label{eqn:serre_prod}
    \partial_{w_1 + w_2} (FG) = (\partial_{w_1}F)G + F(\partial_{w_2}G).
\end{equation}
Finally, we will denote the $r$-fold Serre derivatives as
\[
\partial_k^r F := \partial_{k + 2(r-1)} \partial_{k + 2(r-2)} \cdots \partial_{k} F, \quad \partial_k^0 F := F.
\]

\subsection{Linear programming bounds for optimal sphere packings and modular form inequalities}

Cohn--Elkies' linear programming bound for sphere packings reads as follows.
\begin{theorem}[{Cohn--Elkies \cite[Theorem 3.2]{cohn2003new}}]
\label{thm:sphereLP}
Let $f: \mathbb{R}^{d} \to \mathbb{R}$ be an admissible function satisfying the following three conditions for some $r > 0$:
\begin{enumerate}
    \item $f(\mathbf{0}) = \what{f}(\mathbf{0}) > 0$;
    \item $f(\mathbf{x}) \leq 0$ for $\|\mathbf{x}\| \geq r$;
    \item $\what{f}(\mathbf{x}) \geq 0$ for all $\mathbf{x} \in \mathbb{R}^{d}$.
\end{enumerate}
Then the optimal density $\Delta_d$ of sphere packings in $\mathbb{R}^{d}$ is bounded above by
\[
\Delta_d \leq \left(\frac{r}{2}\right)^{d} \frac{\pi^{d/2}}{(d/2)!}.
\]
\end{theorem}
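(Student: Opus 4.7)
The plan is to reduce the problem to \emph{periodic} sphere packings and then exploit Poisson summation in a way dictated by conditions (1)--(3). First, a standard density/compactness argument shows that it suffices to bound the density of periodic packings, namely those of the form $P = \bigcup_{j=1}^{N} (\Lambda + v_j)$ for a lattice $\Lambda \subset \mathbb{R}^d$ and distinct coset representatives $v_1, \ldots, v_N \in \mathbb{R}^d / \Lambda$. The packing condition for balls of radius $r/2$ then reads $\|x + v_j - v_k\| \geq r$ whenever $(j,k,x) \in \{1,\ldots,N\}^2 \times \Lambda$ is not of the form $(j,j,\mathbf{0})$, and the density of $P$ equals $N \cdot \mathrm{vol}(B_{r/2}) / \mathrm{vol}(\mathbb{R}^d / \Lambda)$.

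The heart of the argument is to estimate the double sum
\[
S := \sum_{j,k=1}^{N} \sum_{x \in \Lambda} f(x + v_j - v_k)
\]
in two different ways. On the spatial side, condition (2) makes every term nonpositive except those with $j = k$ and $x = \mathbf{0}$, giving $S \leq N f(\mathbf{0})$. On the spectral side, Poisson summation applied to each inner sum yields
\[
S \;=\; \frac{1}{\mathrm{vol}(\mathbb{R}^d / \Lambda)} \sum_{y \in \Lambda^*} \what{f}(y) \left| \sum_{j=1}^{N} e^{2\pi i \langle y, v_j \rangle} \right|^{2},
\]
where $\Lambda^*$ is the dual lattice. By condition (3) every summand on the right is nonnegative, so discarding all but $y = \mathbf{0}$ gives $S \geq N^{2} \what{f}(\mathbf{0}) / \mathrm{vol}(\mathbb{R}^d / \Lambda)$. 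Combining the two estimates with $f(\mathbf{0}) = \what{f}(\mathbf{0}) > 0$ yields $\mathrm{vol}(\mathbb{R}^d / \Lambda) \geq N$, hence the density of $P$ is at most $\mathrm{vol}(B_{r/2}) = (r/2)^{d}\, \pi^{d/2} / (d/2)!$, as desired.

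The main obstacles are analytic rather than combinatorial. The reduction to periodic packings requires a genuine approximation argument, since a priori an extremal packing need not be periodic; and the double application of Poisson summation must be justified termwise. Both are exactly what the term \emph{admissible} in the hypothesis is designed to encode: enough decay on $f$ and $\what{f}$ (say $|f(x)|, |\what{f}(x)| = O((1+|x|)^{-d-\varepsilon})$) to guarantee absolute convergence of the double sum and the validity of the spectral rearrangement. Once these technicalities are in place, the two-line sandwich above closes the argument cleanly.
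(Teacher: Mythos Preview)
Your argument is the standard Cohn--Elkies proof via Poisson summation on periodic packings, and it is correct. Note, however, that the paper does not supply its own proof of this theorem: it is quoted as background from \cite{cohn2003new}, so there is nothing in the paper to compare against beyond observing that your sketch matches the original source.
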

Hence one can prove optimality of certain sphere packings by finding \emph{magic functions} with the corresponding radius $r$.
Based on their numerical experiments, Cohn and Elkies conjectured the existence of magic functions in dimension $2$, $8$, and $24$ giving the optimal bounds corresponding to the best known packings.
After a few years, Viazovska \cite{viazovska2017sphere} found an elegant construction of a magic function in dimension $8$ that proves optimality of the $E_8$ lattice sphere packing, and it only took one more week for her and other colleagues to find a similar magic function in dimension $24$ (for Leech lattice) \cite{cohn2017sphere}.
Existence of a magic function in dimension $2$ is still wide open.

Viazovska and Cohn et al.'s constructions are based on ingenious use of (quasi)modular forms.
They first decompose it as a sum of $(\pm 1)$-Fourier eigenfunctions $f = f_+ + f_-$ (hence $\what{f}_{+} = f_{+}$ and $\what{f}_{-} = - f_{-}$), and assume that they have a following form:
\[
    f_{\pm}(\mathbf{x}) = \sin^{2}\left(\frac{\pi\|\mathbf{x}\|^2}{2}\right) \int_{0}^{\infty} \varphi_{\pm}(it) e^{-\pi \|\mathbf{x}\|^{2} t} \dd t
\]
for $x \in \bR^{d}$, where $\varphi_{\pm}$ is a function defined on $\bH$.
Here we can assume that $f$ is radial (by averaging over spheres centered at the origin), and the $\sin^{2}$ factor is added to enforce roots at the desired radii, i.e. the lengths of the vectors in $E_8$ or Leech lattices
(if $f$ is a magic function, then both $f$ and $\what{f}$ should have zeros at the nonzero lattice points, which follows from the proof of Theorem \ref{thm:sphereLP}).
Surprisingly, under this assumption, they proved that $f_{\pm}$ is a ($\pm 1$)-Fourier eigenfunction if and only if $\varphi_{\pm}$ behaves as ``modular forms''.
For example, when $d = 8$, we have $\varphi_{\pm}(t) = t^{2}\psi_{\pm}(i/t)$  with\footnote{Here we normalized in a slightly different way. We have $f(0) = \what{f}(0) = \frac{5}{4 \pi}$. This normalization will be also used in Section \ref{sec:8}.}
\begin{align*}
    \psi_{+} &= -\phi_0 = - \frac{(E_2 E_4 - E_6)^2}{\Delta}, \\
    \psi_{-} &= \frac{36}{\pi^2} \psi_S = - \frac{18}{\pi^2}\frac{\Theta_{2}^{12} (2 \Theta_{2}^{8} + 5 \Theta_{2}^{4} \Theta_{4}^{4} + 5 \Theta_{4}^{8})}{\Delta}.
\end{align*}
The corresponding integrals only converge for $\|\mathbf{x}\| > \sqrt{2}$, and one needs to analytically continue to $0 \leq \|\mathbf{x}\| \leq \sqrt{2}$.
Under this construction, the corresponding inequalities for $f$ and $\what{f}$ reduce to the inequalities between $\psi_{+}$ and $\psi_{-}$, which are essentially the inequalities \eqref{eqn:d8ineq1} and \eqref{eqn:d8ineq2}.
Viazovska proved the inequalities by approximating the functions and reducing it to the finite calculations, which can be checked by interval arithmetic with computer calculations.
$d = 24$ case is similar but more involved, and requires more complicated and careful computer analysis (e.g. Sturm's theorem).

In \cite{romik2023viazovska}, Romik proposed an alternative proof of \eqref{eqn:d8ineq1} and \eqref{eqn:d8ineq2}.
His proof of \eqref{eqn:d8ineq2} uses explicit values of modular forms at $z = i$, such as
\begin{equation}
    \label{eqn:eisval}
    E_2(i) = \frac{3}{\pi}, \quad E_4(i) = \frac{3\Gamma(1/4)^8}{64 \pi^6}, \quad E_6(i) = 0.
\end{equation}
(Note that the first and third identity directly follows from \eqref{eqn:e2trans} and \eqref{eqn:e6trans}, respectively.)
He treated $0 < t < 1$ and $t \geq 1$ cases separately.
Proofs for both cases utilize monotonicity of certain modular forms on the imaginary axis and reducing the inequalities to comparisons of the values of the functions at $t = 1$.
This gives a simpler proof of the inequality \eqref{eqn:d8ineq2} that requires no interval arithmetic.

\section{Positive quasimodular forms}
\label{sec:posqmf}

In this section, we study properties of quasimodular forms that are positive on the imaginary axis or have nonnegative Fourier coefficients.
We say that a quasimodular form $F \in \QM_{w}^{s}$ is \emph{positive} if it takes positive real values on the (positive) imaginary axis ($F(it) > 0$ for all $t > 0$).
We denote as $\QM_{w}^{s,+} \subset \QM_{w}^{s}$ for the subset of positive quasimodular forms.
We define \emph{completely positive} quasimodular forms as those with nonnegative (real) Fourier coefficients, i.e. $F(z) = \sum_{n \geq n_0} a_n q^n$ with $a_n \geq 0$ for all $n \geq n_0$.
We denote the set of such forms of weight $w$ and depth $s$ as $\QM_{w}^{s, ++}$.
Both $\QM_{w}^{s,+}$ and $\QM_{w}^{s,++}$ are \emph{convex cones} in $\QM_{w}^{s}$: they are closed under nonnegative linear combinations.
Clearly we have $\QM_{w}^{s, ++} \subset \QM_{w}^{s, +}$, and the inclusion is strict in general (e.g. $\Delta(z) = q \prod_{n\geq 1} (1 - q^n)^{24}\in \QM_{12}^{0, +}$ but $\Delta(z) = q - 24 q^2 + 252q^3 - 1472 q^4 + \cdots \not \in \QM_{12}^{0, ++}$).

\subsection{Derivative and positivity}

Differentiation preserves complete positivity of quasimodular cusp forms, since it changes the $n$-th Fourier coefficient from $a_n$ to $n a_n$.
\begin{proposition}
\label{prop:dercompletepos}
Let $F \in \QM_{w}^{ s}$ and $F' \in \QM_{w+2}^{ s + 1}$.
Assume $F$ is a cusp form.
Then $F \in \QM_{w}^{ s, ++}$ if and only if $F' \in \QM_{w+2}^{ s +1, ++}$.
\end{proposition}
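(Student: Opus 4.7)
The proposition follows almost immediately from the definition of $D$ on $q$-expansions, so the plan is really to unwind the definitions and verify that the cusp form hypothesis is exactly what removes the obstruction at $n=0$.

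First I would write $F(z) = \sum_{n \geq 1} a_n q^n$, using that $F$ is a cusp form to start the sum at $n=1$. By the formula \eqref{eqn:modformdiff} for the differentiation operator,
\[
    F'(z) = D F(z) = \sum_{n \geq 1} n a_n q^n,
\]
so the $n$-th Fourier coefficient of $F'$ is $n a_n$. Since $n \geq 1$ in every nonzero term, multiplication by $n$ is a strictly positive scalar, and therefore $a_n \geq 0$ for every $n \geq 1$ holds if and only if $n a_n \geq 0$ for every $n \geq 1$. This is precisely the equivalence $F \in \QM_w^{s,++} \iff F' \in \QM_{w+2}^{s+1,++}$ under the cusp form assumption.

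There is essentially no obstacle; the only thing worth flagging is why the cusp form hypothesis is needed. If $F$ had a nonzero constant term $a_0$, then the map $a_n \mapsto n a_n$ would send it to $0$, so $F'$ could satisfy $F' \in \QM_{w+2}^{s+1,++}$ while $a_0 < 0$, breaking the reverse implication. The forward implication ($F \in \QM_w^{s,++} \Rightarrow F' \in \QM_{w+2}^{s+1,++}$) does not actually require the cusp condition, but the converse does, and one short remark suffices to record this. No modular transformation law, no use of the depth, and no analytic estimate enters the argument: complete positivity is by definition a coefficient-wise condition, and differentiation scales coefficients by the positive integer $n$.
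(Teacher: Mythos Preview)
Your proposal is correct and matches the paper's approach exactly: the paper does not even include a formal proof environment for this proposition, offering only the one-line remark that differentiation ``changes the $n$-th Fourier coefficient from $a_n$ to $n a_n$,'' which is precisely what you have unpacked. Your additional observation about why the cusp condition is needed for the reverse implication is a nice clarification that the paper leaves implicit.
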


Positivity (not necessarily complete) is not preserved under derivatives in general.
For example, the discriminant form $\Delta$ is positive due to its product expansion, but its derivative $\Delta' = E_2 \Delta$ has a unique simple zero on the imaginary axis ($\lim_{t \to \infty}E_{2}(it) = 1$ and $\lim_{t \to 0^+} E_{2}(it) = -\infty$).
However, the positivity is preserved under \emph{antiderivatives.}
\begin{proposition}
\label{prop:dpos}
Let $F \in \QM_{w}^{ s}$ be a cusp form.
If $F' \in \QM_{w +2}^{s +1, +}$, then $F \in \QM_{w}^{s, +}$.
\end{proposition}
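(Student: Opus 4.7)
The plan is to exploit the fact that, when restricted to the imaginary axis, the derivative $D = \frac{1}{2\pi i} \frac{\dd}{\dd z}$ essentially becomes differentiation in the real parameter $t$, up to a sign. Specifically, writing $q = e^{2\pi i z}$, the substitution $z = it$ gives $q = e^{-2\pi t}$, and a short chain-rule computation yields
\[
    \frac{\dd}{\dd t}\bigl[X(it)\bigr] = -2\pi\, X'(it).
\]
Since $X$ is assumed to have real Fourier coefficients (so that positivity on the imaginary axis even makes sense), the function $t \mapsto X(it)$ is a real-valued smooth function on $(0, \infty)$.

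Given the hypothesis $X' \in \QM_{w+2}^{s+1,+}$, we have $X'(it) \geq 0$ for all $t > 0$, hence by the displayed identity the function $t \mapsto X(it)$ is monotonically non-increasing on $(0, \infty)$. On the other hand, because $X$ is a cusp form its Fourier expansion starts at $q$ (or higher), so
\[
    \lim_{t \to \infty} X(it) = 0.
\]
A non-increasing function with limit $0$ at $+\infty$ must be everywhere $\geq 0$ on its domain. Therefore $X(it) \geq 0$ for all $t > 0$, which is exactly the statement $X \in \QM_w^{s,+}$.

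There is essentially no obstacle: the argument is a one-line calculus observation once one notes how $D$ transforms under $z = it$, combined with the cuspidal boundary condition at $\infty$. The only subtlety worth flagging in the write-up is that the cusp form hypothesis is \emph{essential}: without it one only gets monotonicity, not a sign, as illustrated by the example $\Delta' = E_2 \Delta$ discussed just before the proposition (here $\Delta'$ is positive as a derivative of a positive cusp form, consistent with the proposition, but conversely $E_2$ itself changes sign, showing that dropping ``cusp form'' would break the argument).
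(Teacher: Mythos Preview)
Your proof is correct and essentially identical to the paper's: both observe that $\frac{\dd}{\dd t}X(it) = -2\pi X'(it) \leq 0$ gives monotonicity, and the cusp condition forces $\lim_{t\to\infty}X(it)=0$, hence $X(it)\geq 0$. One small correction to your closing remark: $\Delta' = E_2\Delta$ is \emph{not} positive on the imaginary axis (it changes sign, since $E_2$ does while $\Delta>0$); the paper cites this precisely as an example showing that positivity of $X$ does \emph{not} imply positivity of $X'$, which is the converse of the proposition, not an illustration of it.
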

\begin{proof}
Let $f(t):= F(it)$ for $t > 0$.
If $F' \in \QM_{w+2}^{s +1, +}$, then $\frac{\dd f}{\dd t} = -2 \pi F'(it) < 0$ and $f(t)$ is monotone decreasing for all $t$.
Hence $f(t) > \lim_{u\to\infty} f(u) = 0$.
\end{proof}

Complete positivity can be characterized as positivity of (higher) derivatives.
To prove this, we need the following version of Bernstein's theorem.\footnote{This theorem was the motivation for the naming of \emph{complete} positivity.}

\begin{theorem}[Bernstein]
\label{thm:bernstein}
Let $g: (0, \infty) \to \mathbb{R}$ be a smooth function.
Then the following are equivalent:
\begin{enumerate}
    \item $g(t)$ is a \emph{completely monotone function}, i.e. $(-1)^{n} g^{(n)}(t) \geq 0$ for all $n \geq 0$ and $t > 0$.
    \item There exists a unique nonnegative measure $\mu$ on $(0, \infty)$ such that
    \[
        g(t) = \int_{0}^{\infty} e^{-tu} \dd \mu(u)
    \]
    where the integral converges absolutely.
\end{enumerate}
\end{theorem}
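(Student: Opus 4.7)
The direction (2) $\Rightarrow$ (1) is the easy one. Given the Laplace representation $g(t) = \int_0^\infty e^{-tu}\,\dd\mu(u)$, the factor $e^{-tu}$ decays faster than any polynomial in $u$, so for each fixed $t > 0$ the function $u \mapsto u^n e^{-tu}$ is $\mu$-integrable, and dominated convergence justifies differentiating under the integral to obtain $g^{(n)}(t) = \int_0^\infty (-u)^n e^{-tu}\,\dd\mu(u)$. The sign $(-1)^n$ then makes the integrand nonnegative, proving complete monotonicity.

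The harder direction (1) $\Rightarrow$ (2) I would prove via the classical Hausdorff--Bernstein--Widder approach. First, a routine induction shows that complete monotonicity is equivalent to nonnegativity of all finite differences $(-1)^n \Delta_h^n g(t) \geq 0$; this is where the hypothesis enters the construction. Next, for each parameter $a > 0$ define a discrete nonnegative measure $\mu_a$ on $(0, \infty)$ placing mass
\[
    \mu_a\!\left(\left\{\tfrac{k}{a}\right\}\right) = \frac{(-a)^k}{k!}\, g^{(k)}(a)
\]
at the point $k/a$, for $k \geq 0$. (This is essentially Post's inversion formula, or equivalently the Bernstein-polynomial approximation after the substitution $x = e^{-t}$.) The nonnegativity of the masses is exactly condition (1), and a direct computation using the Taylor expansion of $e^{-tu}$ shows that $\int_0^\infty e^{-tu}\,\dd\mu_a(u)$ converges to $g(t)$ as $a \to \infty$ for each $t > 0$.

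The main step is to pass to the limit measure-theoretically. The values $g(t_0)$ at any fixed $t_0 > 0$ give a uniform bound on $\int e^{-t_0 u}\,\dd\mu_a(u)$, which controls the tails of $\mu_a$ and prevents escape of mass to infinity; combined with local boundedness of $g$ on compact subsets of $(0,\infty)$, Helly's selection theorem furnishes a vaguely convergent subsequence $\mu_{a_j} \to \mu$. Dominated convergence then identifies $\int e^{-tu}\,\dd\mu(u) = g(t)$. Finally, uniqueness of $\mu$ follows from injectivity of the Laplace transform on nonnegative measures: the span of $\{u \mapsto e^{-tu} : t > 0\}$ is dense in $C_0([0,\infty))$ by Stone--Weierstrass, so any two representing measures must agree on all compactly supported continuous test functions.

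The main obstacle will be the tail-control step needed for Helly selection — one has to argue that no mass of $\mu_a$ escapes to $0$ or to $\infty$ in the limit. Escape to infinity is ruled out by the bound from $g(t_0)$ for any fixed $t_0 > 0$; escape to $0$ requires showing that $g$ remains bounded as $t \to \infty$ (which follows from monotonicity: $g$ is nonincreasing, so $g(\infty) := \lim_{t \to \infty} g(t)$ exists and is finite and nonnegative, contributing a point mass at $u = 0$ to $\mu$ only if $g(\infty) > 0$).
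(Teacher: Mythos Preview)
The paper does not prove this theorem at all: immediately after stating it, the paper simply writes ``A proof can be found in \cite[Theorem 4.8, page 40]{schilling2009bernstein}'' and moves on to use it as a black box in Proposition~\ref{prop:composder}. So there is no ``paper's own proof'' to compare against; your proposal supplies what the paper deliberately outsources.

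As for the outline itself, it is the classical Hausdorff--Bernstein--Widder route and is sound in shape. Two small points of care. First, in the Post--Widder step you assert that $\int e^{-tu}\,\dd\mu_a(u) \to g(t)$ by ``a direct computation using the Taylor expansion of $e^{-tu}$''; in practice one unfolds this to the identity $\sum_{k\ge 0}\frac{(-a)^k}{k!}g^{(k)}(a)\,e^{-tk/a}$ and then needs a genuine argument (Taylor remainder control or the real-analytic Post inversion theorem) to pass to the limit --- this is the place where the complete monotonicity hypothesis is actually doing analytic work, not just guaranteeing signs, so flag it as nontrivial. Second, your discussion of a possible point mass at $u=0$ (when $g(\infty)>0$) is correct and in fact necessary for the equivalence, but note that the paper's statement places $\mu$ on $(0,\infty)$; read literally that would exclude constants like $g\equiv 1$, so the intended domain is $[0,\infty)$ as in the cited reference. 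Your handling is the right one.
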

A proof can be found in \cite[Theorem 4.8, p. 40]{schilling2009bernstein}.
Note that the original version of the Bernstein's theorem consider functions on $[0, \infty)$ and finite measures, and this is a slightly generalized version of it.

\begin{proposition}
\label{prop:composder}
A cusp form $F \in \QM_{w}^{s}$ is completely positive if and only if its derivatives are all positive.
\end{proposition}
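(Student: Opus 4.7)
The forward direction is immediate: if $F = \sum_{n \geq 0} a_n q^n$ has nonnegative Fourier coefficients, then for each $k \geq 0$ the derivative $D^k F = \sum_n n^k a_n q^n$ is again completely positive, so in particular $(D^k F)(it) = \sum_n n^k a_n e^{-2\pi n t} \geq 0$, giving positivity of every $D^k F$ on the imaginary axis.

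For the reverse direction I plan to apply Bernstein's theorem (Theorem \ref{thm:bernstein}) to $g(t) := F(it)$, $t > 0$. A short calculation using $F(z) = \sum a_n e^{2\pi i n z}$ gives
\[
g^{(k)}(t) = (-2\pi)^k (D^k F)(it),
\]
so the hypothesis that every $D^k F$ is positive on the imaginary axis translates exactly to $(-1)^k g^{(k)}(t) \geq 0$ for all $k \geq 0$ and $t > 0$; that is, $g$ is a completely monotone function on $(0,\infty)$. Bernstein's theorem then produces a nonnegative Borel measure $\mu$ on $[0,\infty)$ with $g(t) = \int e^{-tu}\, d\mu(u)$.

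The remaining task is to identify $\mu$ with the known Fourier expansion $g(t) = \sum_{n \geq 0} a_n e^{-2\pi n t}$, and for this I exploit the periodicity $F(z+1) = F(z)$ built into any quasimodular form. The Laplace representation extends $g$ holomorphically to the right half plane $\Re \tau > 0$, and the periodicity gives $g(\tau + i) = g(\tau)$, i.e.
\[
\int_{[0,\infty)} e^{-\tau u} \bigl(1 - e^{-iu}\bigr)\, d\mu(u) = 0 \quad (\Re \tau > 0).
\]
For any fixed $s > 0$ the weighted measure $e^{-su}(1 - e^{-iu})\, d\mu(u)$ is a finite complex Borel measure, and the displayed equation says that its Laplace transform vanishes identically. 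Injectivity of the Laplace transform on finite complex measures then forces this measure to be zero, so $\mu$ is supported on $\{u \geq 0 : e^{-iu} = 1\} = 2\pi \mathbb{Z}_{\geq 0}$. Writing $\mu = \sum_{n \geq 0} b_n \delta_{2\pi n}$ with $b_n \geq 0$ and comparing $\sum b_n e^{-2\pi n t}$ with $\sum a_n e^{-2\pi n t}$, linear independence of the exponentials forces $a_n = b_n \geq 0$ for all $n$, so $F \in \QM_w^{s,++}$.

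The main obstacle is the uniqueness step that pins down the support of $\mu$: one needs to carefully confirm finiteness of the weighted complex measure $e^{-su}(1 - e^{-iu})\, d\mu$ and cite injectivity of the Laplace transform on such measures. Once this standard analytic fact is in place, the whole argument is a clean marriage of Bernstein's theorem with the translation invariance $F(z+1) = F(z)$, which explains conceptually why the only completely monotone functions coming from (holomorphic) quasimodular forms are precisely those with nonnegative $q$-expansions.
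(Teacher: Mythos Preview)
Your argument is correct, but it takes a longer route than the paper's. The paper's reverse direction is a one-liner: since $F(it)=\sum_{n\ge n_0} a_n e^{-2\pi n t}$ is already the Laplace transform of the (a priori signed) discrete measure $\mu=\sum_{n\ge n_0} a_n\,\delta_{2\pi n}$ (absolutely convergent by the polynomial growth of the $a_n$), the \emph{uniqueness} clause in the cited version of Bernstein's theorem forces this measure to coincide with the nonnegative one produced by the theorem, hence $a_n\ge 0$ for all $n$. No periodicity argument is needed.

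Your detour through $F(z+1)=F(z)$ to pin down the support of Bernstein's measure on $2\pi\mathbb{Z}_{\ge 0}$ is valid and is a nice conceptual explanation of \emph{why} the representing measure must be discrete, but it re-derives information that is already encoded in the known $q$-expansion. In effect you invoke Laplace-transform injectivity twice (once for the complex measure $e^{-su}(1-e^{-iu})\,d\mu$, and once as ``linear independence of exponentials''), whereas the paper invokes it once via the uniqueness built into Theorem~\ref{thm:bernstein}. Both approaches rest on the same analytic fact; the paper's is simply more direct.
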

\begin{proof}
This is a direct corollary of Theorem \ref{thm:bernstein}.
Observe that $\frac{\dd^{k}}{\dd t^{k}} F(it) = (-2 \pi)^{k}F^{(k)}(it)$, and the series $f(t) = \sum_{n\geq n_0} a_n e^{-2 \pi n t}$ is the Laplace transform of the measure $\mu = \sum_{n \geq n_0} a_n \delta_{2 \pi n}$.
Although the measure $\mu$ is not finite, the integral
\[
f(t) = \int_{0}^{\infty} e^{-tu} \dd \mu(u)
\]
converges absolutely (by the polynomial growth of the coefficients \cite[Theorem 5, p. 94]{serre2012course}) and we can apply Theorem \ref{thm:bernstein}. Note that $\mu$ is supported on $(0, \infty)$ since $F$ is a cusp form.
\end{proof}

\subsection{Serre derivative and positivity}

We can also prove that \emph{anti-}Serre derivative preserves positivity, which is a simple but surprisingly powerful theorem.

\begin{proposition}
\label{prop:serrepos}
Let $F \in \QM_{w}^{s}$ be a quasimodular form.
Assume that there exists $k$ and $t_0 > 0$ such that $(\partial_{k} F)(it) > 0$ for  all $0 < t < t_0$ and $F(it_0) > 0$.
Then $F(it) > 0$ for all $0 < t \leq t_0$.
\end{proposition}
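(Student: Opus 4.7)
The plan is to compare $F$ with an appropriate power of $\Delta$ on the imaginary axis. Because the product expansion gives $\Delta(it) = \eta(it)^{24} > 0$ for every $t > 0$, the positive real $(k/12)$-th power $\Delta(it)^{k/12}$ is a well-defined smooth positive function of $t$, and I may form the auxiliary function
\[
g(t) := \frac{F(it)}{\Delta(it)^{k/12}}, \qquad t \in (0,t_0].
\]

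The heart of the argument is the pointwise identity
\[
g'(t) = -2\pi \cdot \frac{(\partial_k F)(it)}{\Delta(it)^{k/12}}.
\]
To verify this, I would apply the product rule to $F(it)\cdot \Delta(it)^{-k/12}$, using $\frac{d}{dt}F(it) = -2\pi F'(it)$ together with $\Delta'/\Delta = E_2$ (which follows from Ramanujan's identities \eqref{eqn:e2der}--\eqref{eqn:e6der} applied to $\Delta = (E_4^3 - E_6^2)/1728$). The resulting numerator is precisely $F'(it) - \frac{k}{12}E_2(it)F(it) = (\partial_k F)(it)$. Note that this identity holds pointwise without any sign hypothesis on $F(it)$, since $\Delta(it)^{k/12}$ never vanishes.

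Given the identity, the proposition is immediate. The hypothesis $(\partial_k F)(it) \geq 0$ on $(0,t_0)$ and the positivity of $\Delta(it)^{k/12}$ force $g'(t) \leq 0$ there, so $g$ is non-increasing on $(0,t_0)$, and hence on $(0,t_0]$ by continuity. Therefore, for every $t \in (0,t_0]$,
\[
g(t) \;\geq\; g(t_0) \;=\; \frac{F(it_0)}{\Delta(it_0)^{k/12}} \;>\; 0,
\]
and multiplying by the positive factor $\Delta(it)^{k/12}$ recovers $F(it) > 0$ on $(0,t_0]$.

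There is no genuine analytic obstacle; the only step requiring insight is the choice of $\Delta^{k/12}$ as the comparison form. This is essentially dictated by the definition of the Serre derivative as a twist of $D$ by $\frac{k}{12}E_2 = \frac{k}{12}\,\Delta'/\Delta$, which is exactly what makes $\partial_k F / F$ equal (up to a constant) to the logarithmic derivative of $F/\Delta^{k/12}$. After that observation the proof collapses to a single differentiation.
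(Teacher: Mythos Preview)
Your proof is correct and is essentially identical to the paper's own argument: both form the quotient $g(t)=F(it)/\Delta(it)^{k/12}$, use $\Delta'=E_2\Delta$ to obtain $g'(t)=-2\pi(\partial_kF)(it)/\Delta(it)^{k/12}\le 0$, and conclude monotonicity. Your write-up is slightly more careful than the paper's in handling the non-strict inequality $(\partial_kF)(it)\ge 0$ and in spelling out the final step $g(t)\ge g(t_0)>0$.
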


\begin{proof}
By $\Delta' = E_{2} \Delta$, we have
\begin{align*}
\frac{\dd}{\dd t} \left( \frac{F(it)}{\Delta(it)^{\frac{k}{12}}}\right) 
= (-2 \pi) \frac{F'(it) \Delta(it)^{\frac{k}{12}} - F(it) \frac{k}{12} E_{2}(it) \Delta(it)^{\frac{k}{12}}}{\Delta(it)^{\frac{k}{6}}} 
= (-2 \pi) \frac{(\partial_{k} F)(it)}{\Delta(it)^{\frac{k}{12}}}  < 0,
\end{align*}
hence $t \mapsto F(it) / \Delta(it)^{\frac{k}{12}}$ is monotone decreasing and the result follows.
\end{proof}

\begin{corollary}
\label{cor:serrepos}
Let $F \in \QM_{w}^{s}$ be a quasimodular form.
If $\partial_{k} F \in \QM_{w+2}^{s+1, +}$ and $F(it) > 0$ for \emph{sufficiently large $t > 0$}, then $F \in \QM_{w}^{s, +}$.
In particular, the assumption holds if all the Fourier coefficients of $F$ are real and the first nonzero Fourier coefficient of $F$ is positive.
\end{corollary}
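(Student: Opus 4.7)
The plan is to deduce this corollary directly from Proposition \ref{prop:serrepos} by letting the parameter $t_0$ tend to infinity, and then to verify the "especially" clause by a short Fourier-series asymptotic. Here is the outline.

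First I would observe that the hypothesis $\partial_{k} F \in \QM_{w+2}^{s+1,+}$ is exactly the statement that $(\partial_{k} F)(it) \geq 0$ for all $t > 0$, so in particular for every choice of $t_0 > 0$ the inequality $(\partial_{k}F)(it) \geq 0$ holds on all of $(0, t_0)$. Combined with the other hypothesis, for every sufficiently large $t_0$ we have both $(\partial_{k}F)(it) \geq 0$ on $(0,t_0)$ and $F(it_0) > 0$. Proposition \ref{prop:serrepos} then immediately gives $F(it) > 0$ for all $0 < t \leq t_0$. Since $t_0$ can be chosen arbitrarily large, this yields $F(it) > 0$ for every $t > 0$, so $F \in \QM_{w}^{s,+}$.

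For the second assertion, suppose the Fourier expansion of $F$ is $F(z) = \sum_{n \geq n_0} a_n q^n$ with all $a_n \in \mathbb{R}$ and $a_{n_0} > 0$ the first nonzero coefficient (note $n_0 \geq 0$ since quasimodular forms are holomorphic at the cusp). Setting $z = it$ and factoring out the leading term gives
\[
F(it) = a_{n_0} e^{-2\pi n_0 t}\left(1 + \sum_{n > n_0} \frac{a_n}{a_{n_0}} e^{-2\pi(n - n_0)t}\right).
\]
The tail sum tends to $0$ as $t \to \infty$ (using the polynomial growth bound on $a_n$, e.g.\ \cite[Theorem 5, page 94]{serre2012course}, to guarantee absolute convergence and decay of the tail), so the bracketed factor is bounded below by $1/2$ for $t$ large. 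Thus $F(it) > 0$ for sufficiently large $t$, verifying the hypothesis of the first part.

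There is essentially no obstacle: the content of the corollary is the repackaging of Proposition \ref{prop:serrepos} into a statement about global positivity, and the only thing to check carefully is that "sufficiently large $t$" in the hypothesis can be upgraded to an arbitrarily large witness $t_0$, which is trivially true. The asymptotic calculation in the last step is routine given that $F$ is holomorphic at $i\infty$ with real Fourier coefficients.
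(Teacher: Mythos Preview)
Your proposal is correct and follows essentially the same approach as the paper: the first assertion is deduced from Proposition~\ref{prop:serrepos} by taking $t_0$ arbitrarily large, and the second assertion is proved by factoring out the leading Fourier term and observing that $e^{2\pi n_0 t}F(it) \to a_{n_0} > 0$ as $t \to \infty$. Your write-up is slightly more explicit than the paper's (which simply says the first part ``directly follows''), but the content is the same.
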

\begin{proof}
First assertion directly follows from Proposition \ref{prop:serrepos}.
For the last assertion, when $F$ has a Fourier expansion $F(z) = \sum_{n \geq n_0} a_n q^{n}$ with $a_{n_0} > 0$, then
\[
e^{2 \pi n_0 t} F(it) = a_{n_{0}} + e^{-2 \pi t}\sum_{n \geq n_{0} + 1} a_n e^{-2 \pi (n - n_0 - 1)t}
\]
and $\lim_{t \to \infty} e^{2 \pi n_0 t} F(it) = a_{n_0} > 0$, so $F(it) > 0$ for sufficiently large $t$.
\end{proof}

\begin{remark}
\label{rem:serrepos}
It is also possible to \emph{solve} the differential equation $\partial_{k} F = G$ and express $f(t) = F(it)$ in  $g(t) = G(it)$ as
\begin{equation}
\label{eqn:antiserresolinit}
    f(t) = \left(\frac{\eta(it)}{\eta(it_0)}\right)^{2k} f(t_0) + 2\pi \int_{t}^{t_0} \left(\frac{\eta(it)}{\eta(iu)}\right)^{2k} g(u) \dd u.
\end{equation}
Also, Proposition \ref{prop:serrepos} holds for more general class of functions, e.g. differentiable on $(0, \infty)$ with
\[
    \partial_k := D - \frac{k}{12}E_2(it) = -\frac{1}{2 \pi} \frac{\dd}{\dd t} - \frac{k}{12}E_2(it),
\]
and this version will be used in the proof of the inequality \eqref{eqn:d24ineq3} later.
\end{remark}

In general, the Serre derivative does not preserve complete positivity, e.g. $E_4$ is completely positive but $\partial_4 E_4 = -\frac{E_{6}}{3} = -\frac{1}{3} + 168q + \cdots$ is not.
However, when the vanishing order at the cusp is sufficiently large, then it actually does.
\begin{proposition}
\label{prop:serredercompos}
Let $F = \sum_{n \geq n_0} a_{n} q^n \in \QM_{w}^{s, ++}$.
For $k \geq 0$ and $n \geq k / 12$, the $n$-th coefficient of $\partial_{k}F$ is nonnegative.
Especially, if $n_0 \geq k / 12 \geq 0$, $\partial_k F$  is also completely positive.
\end{proposition}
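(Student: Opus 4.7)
The plan is to prove the claim by direct Fourier expansion of $\partial_k F = F' - \frac{k}{12} E_2 F$, reading off the $n$-th coefficient explicitly and checking sign. This is elementary bookkeeping once we write $E_2 = 1 - 24 \sum_{m \geq 1} \sigma_1(m) q^m$.

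First I would write
\[
    \partial_k F = F' - \frac{k}{12} F + 2k \sum_{m \geq 1} \sigma_1(m) q^m \cdot F.
\]
Since $F = \sum_{n \geq n_0} a_n q^n$, the coefficient of $q^n$ in $\partial_k F$ is
\[
    \left(n - \frac{k}{12}\right) a_n + 2k \sum_{m=1}^{n - n_0} \sigma_1(m) a_{n-m},
\]
valid for all $n \geq n_0$ (and zero for $n < n_0$ because both $F'$ and $E_2 F$ start at $q^{n_0}$).

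Next I would observe that under the hypothesis $n \geq k/12$ and $k \geq 0$, with $F \in \QM_w^{s,++}$ (so every $a_n \geq 0$ and $\sigma_1(m) > 0$), the first term is nonnegative because $n - k/12 \geq 0$ and $a_n \geq 0$, while the second term is a nonnegative combination of nonnegative quantities. This proves the $n$-th coefficient of $\partial_k F$ is nonnegative, which is the main claim.

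For the final sentence, if $n_0 \geq k/12 \geq 0$, then every $n \geq n_0$ satisfies $n \geq k/12$, so by the above every Fourier coefficient of $\partial_k F$ is nonnegative, i.e.\ $\partial_k F \in \QM_{w+2}^{s+1,++}$. There is no real obstacle here — the only subtlety worth noting is that the formula above implicitly uses that $F$ has no terms below $q^{n_0}$, so that the convolution sum terminates at $m = n - n_0$; this is automatic from the definition of $\QM_w^{s,++}$.
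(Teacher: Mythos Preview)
Your proof is correct and takes essentially the same approach as the paper: both expand $\partial_k F = F' - \frac{k}{12}E_2 F$ directly, write the $n$-th Fourier coefficient as $(n - \frac{k}{12})a_n + 2k\sum_{m=1}^{n-n_0}\sigma_1(m)a_{n-m}$, and read off nonnegativity termwise. The only cosmetic difference is that you split off the constant term of $E_2$ first, whereas the paper writes out the product with $E_2$ in one step.
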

\begin{proof}
One can directly check that the Fourier expansion of Serre derivative is
\begin{align}
    \partial_k F &= \partial_{k} \left(a_{n_0} q^{n_{0}} + a_{n_0 + 1}q^{n_0 + 1} + a_{n_0 + 2}q^{n_0 + 2} + \cdots\right) \nonumber \\
    &= \left( n_0 a_{n_0}q^{n_0} + (n_0 + 1) a_{n_0 + 1}q^{n_0 + 1} + \cdots \right) \nonumber \\
    & \qquad - \frac{k}{12} (1 - 24q - 72q^2 - 96q^3 - \cdots) \left(a_{n_0} q^{n_{0}} + a_{n_0 + 1}q^{n_0 + 1} + a_{n_0 + 2}q^{n_0 + 2} + \cdots\right) \nonumber \\
    &= \left( n_0 - \frac{k}{12} \right) a_{n_0}q^{n_0} + \left(\left(n_0 + 1 - \frac{k}{12}\right) a_{n_0 + 1} + 2ka_{n_0}\right) q^{n_0 + 1} + \cdots \nonumber \\
    &\qquad + \left(\left(n_0 + m - \frac{k}{12}\right)a_{n_0 + m} 
 + 2k \sum_{j=1}^{m} \sigma_1(m + 1 - j)a_{n_0 + j - 1}\right)q^{n_0 + m} + \cdots .\label{eqn:serrederqexp}
\end{align}
Hence if $n_0 \geq k / 12$ and $a_j \geq 0$ for all $j \geq n_0$, the Fourier coefficients of $\partial_k F$ are also all nonnegative.
\end{proof}

\subsection{Level and positivity}

We also consider (completely) positive quasimodular forms of higher level.
For completely positive forms, we will only consider the $q$-expansions at the cusp $i\infty$, although there are several cusps for a congruence subgroup $\Gamma \subset \SL_2(\bZ)$ in general.
One easy way to construct (completely) positive quasimodular forms of level $\Gamma_0(N)$ is using \emph{old forms}.

\begin{proposition}
\label{prop:poslevelincrease}
Let $F(z) \in \QM_{w}^{s}(\SL_2(\bZ))$ be a positive (resp. completely positive) quasimodular form of weight $w$, depth $s$, and level $\SL_2(\bZ)$. Then for any $N \in \bZ_{\geq 1}$, the form $G(z) := F(Nz) \in \QM_{w}^{s}(\Gamma_0(N))$ is also a positive (resp. completely positive) quasimodular form.
\end{proposition}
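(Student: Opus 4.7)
The plan is straightforward, since this statement is essentially a tautology once the level-raising is set up correctly. I would split the argument into two independent verifications: first, the quasimodularity statement that $G(z) := F(Nz) \in \QM_w^s(\Gamma_0(N))$; second, the transfer of positivity from $F$ to $G$.

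For the quasimodularity, the key identity is the following. For $\gamma = \begin{bmatrix} a & b \\ c & d \end{bmatrix} \in \Gamma_0(N)$, write $c = Nc'$ with $c' \in \bZ$, and compute
\[
N \cdot \frac{az+b}{cz+d} = \frac{a(Nz) + Nb}{c'(Nz) + d} = \gamma'(Nz),
\]
where $\gamma' := \begin{bmatrix} a & Nb \\ c' & d \end{bmatrix}$ has determinant $ad - Nbc' = ad - bc = 1$ and hence lies in $\SL_2(\bZ)$. The quasimodular transformation law for $F$ under $\gamma'$ then pulls back to the quasimodular transformation law for $G$ under $\gamma$, with the factor of automorphy $(c'(Nz)+d)^w = (cz+d)^w$ matching and the anomalous $E_2$-like term from \eqref{eqn:e2trans} only contributing at depth $\leq s$. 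Writing $F$ explicitly as a polynomial of degree $\leq s$ in $E_2$ with coefficients that are modular forms in $E_4, E_6$ and substituting $z \mapsto Nz$ shows $G$ has the same shape in $E_2(Nz), E_4(Nz), E_6(Nz)$, so weight $w$ and depth $\leq s$ are preserved.

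The positivity claims are then immediate consequences. If $F(it) \geq 0$ for all $t > 0$, then $G(it) = F(iNt) \geq 0$ since $Nt > 0$ as well. If $F(z) = \sum_{n \geq n_0} a_n q^n$ with $a_n \geq 0$, then the substitution $q \mapsto q^N$ (equivalently, replacing $z$ by $Nz$) yields $G(z) = \sum_{n \geq n_0} a_n q^{Nn}$, whose Fourier coefficients at $i\infty$ are nonnegative — they are just the $a_n$ redistributed onto the sublattice $N\bZ_{\geq n_0}$ of exponents, with zeros at indices not divisible by $N$. There is no real obstacle: the entire content is the bookkeeping observation that the scaling $z \mapsto Nz$ converts level-one quasimodular data into level-$N$ quasimodular data without disturbing either the sign pattern of imaginary-axis values or the Fourier coefficients at the cusp $i\infty$.
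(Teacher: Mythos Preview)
Your proposal is correct and takes essentially the same approach as the paper: both reduce the positivity claims to the observations $G(it)=F(iNt)$ and $G(z)=\sum_{n\ge n_0}a_nq^{Nn}$. You simply add an explicit verification of the level-raising quasimodularity via the conjugation $\gamma\mapsto\gamma'$, which the paper takes as a known fact and omits.
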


\begin{proof}
If $f$ has a $q$-expansion $f(z) = \sum_{n \geq n_0} a_n q^{n}$, then $g(it) = f(iNt)$ and $g(z) = \sum_{n\geq n_0}a_n q^{Nn}$, and the proposition immediately follows.
\end{proof}

\section{Extremal quasimodular forms}
\label{sec:extremal}

\subsection{Definitions and examples}
\label{subsec:extremal_defex}

In \cite{kaneko2006extremal}, Kaneko and Koike defined and studied \emph{extremal quasimodular forms}, which are the quasimodular forms of given depth with maximum possible order of zeros at infinity.
In other words, for given weight $w$ and depth $s$, a quasimodular form $f \in \QM_{w}^{ s} \backslash \QM_{w}^{s - 1}$ is \emph{extremal} if, for $m = \dim_\mathbb{C} \QM_{w}^{s}$, the first $m$ Fourier coefficients of $f = \sum_{n \geq 0} a_n q^n$ are 
\[
    a_0 = a_1 = \cdots = a_{m-2} = 0, a_{m-1} \neq 0.
\]
The authors conjectured existence and uniqueness (up to a constant) of extremal forms for each (even) weight $w$ and depth $s$ (satisfying $0\leq s\leq w/2, s \neq \frac{w}{2} - 1$), and give examples in case of depth $1$ and $2$ that are defined recursively and satisfying certain differential equations.
Pellarin \cite[Theorem 1.3, p. 403]{pellarin2020extremal} established the conjecture for $s \leq 4$, and
Grabner \cite{grabner2020quasimodular} extended Kaneko--Koike's result and constructed differential equations satisfied by depth $\leq 4$ extremal quasimodular forms.
For these depths, we will denote the normalized (i.e. the first nonzero Fourier coefficient is $1$) extremal form of weight $w$ and depth $s$ as $X_{w, s}$.
Kaneko and Koike also conjectured that the Fourier coefficients of extremal forms of depth $\leq 4$ are all positive \cite[Conjecture 2, p. 469]{kaneko2006extremal}, and Grabner \cite[Theorem 1, p. 1030]{grabner2022asymptotic} proved the conjecture \emph{for all but finitely many coefficients}.
The proof uses Jenkins and Rouse's explicit version of Deligne's bound \cite{jenkins2011bounds}.

We can check complete positivity of certain low-weight extremal forms from Ramanujan's identities:
\begin{lemma}
\label{lem:lowpos}
The following quasimodular forms are extremal and completely positive:
\begin{align*}
    X_{4, 2} &= \frac{1}{288}(E_4 - E_2^2) \in \QM_{4}^{2, ++}, \\
    X_{6, 1} &= \frac{1}{720}(E_2E_4 - E_6) \in \QM_{6}^{1, ++}, \\
    X_{8, 1} &= \frac{1}{1008}(E_4^2 - E_2E_6) \in \QM_{8}^{1, ++}.
\end{align*}
\end{lemma}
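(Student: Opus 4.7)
The plan is to invoke Ramanujan's identities \eqref{eqn:e2der}--\eqref{eqn:e6der} to rewrite each of the three forms as a rational multiple of a single derivative $E_k'$. Since the operator $D$ takes $\sum a_n q^n$ to $\sum n a_n q^n$, this makes complete positivity transparent from the known Fourier expansions of $E_2$, $E_4$, $E_6$.

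Concretely, $E_2' = (E_2^2 - E_4)/12$ gives $E_4 - E_2^2 = -12 E_2'$, hence $X_{4,2} = -E_2'/24$. Using $E_2 = 1 - 24\sum_{n\geq 1} \sigma_1(n) q^n$, this becomes
\[
    X_{4,2} = \sum_{n\geq 1} n\, \sigma_1(n)\, q^n,
\]
which is manifestly in $\QM_4^{2,++}$. In the same way, $E_4' = (E_2 E_4 - E_6)/3$ yields
\[
    X_{6,1} = \frac{E_4'}{240} = \sum_{n\geq 1} n\, \sigma_3(n)\, q^n,
\]
and $E_6' = (E_2 E_6 - E_4^2)/2$ yields
\[
    X_{8,1} = -\frac{E_6'}{504} = \sum_{n\geq 1} n\, \sigma_5(n)\, q^n,
\]
both lying in the corresponding complete-positivity cones.

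For the extremality implicit in the notation $X_{w,s}$, I would note that each of $\QM_4^2$, $\QM_6^1$, $\QM_8^1$ is two-dimensional, with respective bases $\{E_2^2, E_4\}$, $\{E_2 E_4, E_6\}$, $\{E_2 E_6, E_4^2\}$; extremality therefore reduces to the vanishing of the constant term, which is immediate from each displayed expression.

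There is no real obstacle: the entire content of the lemma is a direct unpacking of the three Ramanujan identities into explicit $q$-series with visibly nonnegative coefficients. The only reason to single these cases out is that the identification $X_{w,s} = \pm c \cdot E_k'$ is a small-weight coincidence; for higher-weight depth-$1$ or depth-$2$ extremal forms one no longer has such a clean closed form, and more substantive positivity arguments (as developed in Section \ref{sec:posqmf}) will be needed.
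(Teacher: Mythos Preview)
Your proposal is correct and matches the paper's own proof essentially line for line: the paper also invokes Ramanujan's identities to write $X_{4,2} = -E_2'/24$, $X_{6,1} = E_4'/240$, $X_{8,1} = -E_6'/504$ and reads off the same explicit $q$-expansions $\sum n\sigma_k(n)q^n$. The only cosmetic difference is that the paper handles extremality by citing \cite{kaneko2006extremal}, whereas you sketch the two-dimensionality argument directly.
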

\begin{proof}
Extremalities are mentioned in \cite[Example 1.4, p. 459]{kaneko2006extremal}.
Complete positivity directly follows from Ramanujan's identities,
\begin{align*}
    X_{4, 2} = \frac{1}{288}(E_4 - E_2^2) &= -\frac{1}{24} E_2' = \sum_{n\geq 1} n\sigma_1(n) q^n, \\
    X_{6, 1} = \frac{1}{720}(E_2 E_4 - E_6) &= \frac{1}{240}E_4' =  \sum_{n\geq 1} n\sigma_3(n) q^n, \\
    X_{8, 1} = \frac{1}{1008}(E_4^2 - E_2 E_6) &= -\frac{1}{504}E_6' = \sum_{n\geq 1} n\sigma_5(n) q^n.
\end{align*}
\end{proof}
More examples can be found in Appendix \ref{subsec:exttab}, Table \ref{tab:extform}.

\subsection{Kaneko--Koike's conjecture for depth $1$}
In this subsection, we prove the conjecture of Kaneko and Koike \cite[Conjecture 2, p. 469]{kaneko2006extremal} in the case of depth $1$.

For even $w \geq 6$, let $X_{w} = X_{w, 1}$ be the unique normalized extremal quasimodular form of weight $w$ and depth $1$.
We have $X_{6} = (E_2 E_4 - E_6) / 720$ and $X_w$'s satisfy the following recursive formula for $w \geq 6$ with $6|w$ \cite[Proposition 6.1, p. 2258]{grabner2020quasimodular}: 
\begin{align}
    X_{w + 2} &= \frac{12}{w + 1} \partial_{w - 1}X_{w} \label{eqn:d1rec1}\\
    X_{w + 4} &= E_4 X_{w} \label{eqn:d1rec2}\\
    X_{w + 6} &= \frac{w + 6}{72(w + 1)(w + 5)} \left(E_4 \partial_{w - 1} X_{w} - \frac{w + 1}{12} E_6 X_{w}\right)  \nonumber \\
    &= \frac{w + 6}{864(w + 5)} \left(E_4 X_{w + 2} - E_6 X_{w}\right). \label{eqn:d1rec3}
\end{align}
(The last equality follows from \cite[eq. (6.6), p. 2258]{grabner2020quasimodular} and \eqref{eqn:d1rec1}.)
The vanishing order of $X_{w}$ at the cusp is $\lfloor \frac{w}{6} \rfloor$.
Also, $X_{w}$ is a solution of the following differential equation (for $6 \mid w$  \cite[Theorem 2.1 (1), p. 461]{kaneko2006extremal})
\begin{equation}
\label{eqn:oded1}
X_{w}'' - \frac{w}{6}E_2 X_{w}' + \frac{w(w - 1)}{144} (E_2^2 - E_4) X_{w} = 0
\end{equation}
or equivalently (see \cite[eq. (6.1), p. 2257]{grabner2020quasimodular}),
\begin{equation}
    \partial_{w-1}^2 X_{w} - \frac{w^2 - 1}{144} E_4 X_w = 0 \label{eqn:oded1serre}
\end{equation}

Our main goal is to prove new recurrence relation \eqref{eqn:kkd1eq1}, which immediately implies the complete positivity of $X_{w}$'s.
In the proofs, we adopt the notation used in \cite{grabner2020quasimodular}, where an equation number with subscript \(w+a\) means that the parameter \(w\) in the original equation is replaced by \(w + a\).

\begin{lemma}
    \label{lem:kkd1}
    For $6 \mid w$, we have
    \begin{equation}
        X_{w+4} = \frac{12}{w-1} \partial_{w+1} X_{w+2} = \frac{12}{w-1}\left(X_{w+2}' - \frac{w+1}{12} E_2 X_{w+2}\right). \label{eqn:d1rec2serre}
    \end{equation}
\end{lemma}
\begin{proof}
    The first equality follows from \eqref{eqn:d1rec1} and \eqref{eqn:oded1serre}:  
    \begin{equation}
        \label{eqn:d1rec4}
        X_{w+4} = E_4 X_w = \frac{144}{(w-1)(w+1)} \partial_{w-1}^2 X_w = \frac{12}{w-1} \partial_{w+1} X_{w+2}.
    \end{equation}
    Second equality follows from the definition of Serre derivative.
\end{proof}
\begin{theorem}
\label{thm:kkd1}
Let $X_{w} = X_{w, 1}$ be the unique normalized extremal quasimodular form of weight $w$ and depth $1$.
For $6|w$ and  $w \geq 12$, we have
\begin{equation}
    X_{w}' = \frac{5w}{72} X_{6} X_{w-4} + \frac{7w}{72} X_{8} X_{w - 6}. \label{eqn:kkd1eq1}
\end{equation}
\end{theorem}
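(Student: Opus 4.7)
The plan is to reduce the identity to a first-order formula for $X_{w-4}'$ in terms of $X_{w-4}$ and $X_{w-6}$, which in turn will follow from the depth-$1$ ODE \eqref{eqn:oded1} satisfied by $X_{w-6}$. The argument uses only the three recurrences \eqref{eqn:d1rec1}--\eqref{eqn:d1rec3} (applied at carefully chosen indices), Ramanujan's identities, and the ODE; no induction on $w$ is required.

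First I would rewrite \eqref{eqn:d1rec3} with $w$ in place of $w+6$, which is legal since $6\mid(w-6)$ and $w-6\geq 6$:
\[
    X_w=\frac{w}{864(w-1)}\bigl(E_4 X_{w-4}-E_6 X_{w-6}\bigr).
\]
Differentiating and applying $E_4'=240 X_6$ and $E_6'=-504 X_8$ (both read off from Lemma \ref{lem:lowpos}), a direct comparison with the desired identity $X_w'=\frac{5w}{72}X_6 X_{w-4}+\frac{7w}{72}X_8 X_{w-6}$ reduces the claim, after multiplying through by $\frac{864(w-1)}{w}$, to
\begin{equation*}
    E_4 X_{w-4}'-E_6 X_{w-6}'=60(w-5)X_6 X_{w-4}+84(w-7)X_8 X_{w-6}. \tag{$\star$}
\end{equation*}

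Next, I would eliminate $X_{w-6}'$ via \eqref{eqn:d1rec1} at index $w-6$ (valid since $6\mid(w-6)$), which yields $X_{w-6}'=\frac{w-5}{12}X_{w-4}+\frac{w-7}{12}E_2 X_{w-6}$. Substituting this into $(\star)$ and invoking the two elementary rewrites
\[
    60 X_6+\frac{E_6}{12}=\frac{E_2 E_4}{12},\qquad 84 X_8+\frac{E_2 E_6}{12}=\frac{E_4^2}{12},
\]
which are immediate from the formulas in Lemma \ref{lem:lowpos}, the factor $E_4$ divides out of $(\star)$ and it collapses to
\begin{equation*}
    X_{w-4}'=\frac{w-5}{12}E_2 X_{w-4}+\frac{w-7}{12}E_4 X_{w-6}. \tag{$\star\star$}
\end{equation*}

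To close the argument I would prove $(\star\star)$ by differentiating the formula for $X_{w-6}'$ once more (which produces $X_{w-4}'$ inside $X_{w-6}''$) and equating with the expression for $X_{w-6}''$ supplied by the ODE \eqref{eqn:oded1}. Using $E_2'=(E_2^2-E_4)/12$ to align the $E_2^2-E_4$ terms, and a second substitution of the formula for $X_{w-6}'$ to remove the remaining first derivative, the coefficients on $(E_2^2-E_4)X_{w-6}$ combine via $(w-7)\bigl(1+(w-6)\bigr)=(w-5)(w-7)$, and the common factor $(w-5)/12$ divides out (nonzero for $w\geq 12$), yielding $(\star\star)$. The main technical obstacle is the careful bookkeeping of the $(w-5)$ and $(w-7)$ factors through the two substitutions; the cancellation works only because the coefficients in the extremal depth-$1$ ODE are tuned precisely for this telescoping.
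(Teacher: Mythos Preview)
Your proof is correct and follows essentially the same route as the paper: differentiate the expression $X_w=\frac{w}{864(w-1)}(E_4 X_{w-4}-E_6 X_{w-6})$, substitute Ramanujan's identities and the formula for $X_{w-6}'$ coming from \eqref{eqn:d1rec1} at index $w-6$, and reduce everything to the single identity $X_{w-4}'=\frac{w-5}{12}E_2 X_{w-4}+\frac{w-7}{12}E_4 X_{w-6}$, which is exactly the paper's \eqref{eqn:d1eq}. The only substantive difference is how that identity is justified: the paper records it as an input (labelling it ``\eqref{eqn:d1rec1}${}_{w-4}$''), whereas you derive it cleanly from the depth-$1$ ODE \eqref{eqn:oded1} at $w-6$ together with \eqref{eqn:d1rec1} at $w-6$; your derivation is in fact more self-contained on this point.
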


\begin{proof}
From \eqref{eqn:d1rec2serre}${}_{w-6}$ and \eqref{eqn:d1rec2}${}_{w-6}$, we get
\begin{equation}
X_{w-4}' = \frac{w-5}{12} E_2 X_{w-4} + \frac{w-7}{12} E_4 X_{w-6} \label{eqn:d1eq}
\end{equation}
and differentiating \eqref{eqn:d1rec3}$_{w-6}$ gives
\begin{alignat*}{2}
X_{w}' &= \frac{w}{864(w - 1)} (E_4' X_{w - 4} + E_4 X_{w - 4}' - E_6' X_{w - 6} - E_6 X_{w - 6}') \qquad &&\text{$\cdots$\eqref{eqn:d1rec3}${}_{w-6}$}\\
&= \frac{w}{864(w -1)} \left(\frac{E_2 E_4 - E_6}{3}X_{w - 4} + \frac{w - 5}{12}E_2 E_4 X_{w - 4} + \frac{w - 7}{12}E_4^2 X_{w - 6} \right. \qquad && \text{$\cdots$\eqref{eqn:d1eq}${}_{w}$}\\
&\quad \left.- \frac{E_2 E_6 - E_4^2}{2}X_{w - 6} - E_6 \left(\frac{w - 5}{12}X_{w - 4} + \frac{w - 7}{12}E_2 X_{w 
- 6}\right)\right) \qquad && \text{$\cdots$\eqref{eqn:d1rec1}${}_{w-6}$}\\
&= \frac{w}{864(w - 1)} \left(\frac{w - 1}{12}(E_2 E_4 - E_6) X_{w - 4} + \frac{w - 1}{12}(E_4^2 - E_2 E_6)X_{w - 6}\right) \\
&= \frac{5w}{72} X_6 X_{w - 4} + \frac{7w}{72}X_8 X_{w - 6}.
\end{alignat*}
\end{proof}

\begin{corollary}
\label{cor:kkd1}
The Kaneko--Koike's conjecture is true for depth $1$ extremal forms.
\end{corollary}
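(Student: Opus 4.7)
The plan is to proceed by strong induction on the even weight $w \geq 6$, using Theorem \ref{thm:kkd1} together with the recurrences \eqref{eqn:d1rec1} and \eqref{eqn:d1rec2} to reduce each $X_w$ to extremal forms of strictly smaller weight. The base cases $w = 6$ and $w = 8$ are already handled by Lemma \ref{lem:lowpos}.

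For the inductive step, I would split into three cases according to $w \bmod 6$. When $w \equiv 0 \pmod{6}$, so $w \geq 12$, Theorem \ref{thm:kkd1} expresses $X_w'$ as a nonnegative linear combination of the products $X_6 X_{w-4}$ and $X_8 X_{w-6}$. Both $X_{w-4}$ and $X_{w-6}$ are completely positive by the induction hypothesis (or, for $w=12$, already by Lemma \ref{lem:lowpos}), so $X_w'$ is completely positive as a product/sum of such. Since $X_w$ vanishes to order $w/6 \geq 2$ at infinity, it is a cusp form, and Proposition \ref{prop:dercompletepos} transfers complete positivity from $X_w'$ back to $X_w$.

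When $w \equiv 4 \pmod{6}$, so $w \geq 10$, the recurrence $X_w = E_4 X_{w-4}$ from \eqref{eqn:d1rec2} (applied with $6 \mid w-4$) exhibits $X_w$ as the product of two completely positive forms. When $w \equiv 2 \pmod{6}$, so $w \geq 14$, I would use \eqref{eqn:d1rec1} to write $X_w = \frac{12}{w-1}\,\partial_{w-3} X_{w-2}$ with $6 \mid w-2$. The form $X_{w-2}$ is completely positive by induction and has vanishing order $n_0 = (w-2)/6$, while the relevant Serre derivative has weight parameter $k = w-3$. The hypothesis $n_0 \geq k/12$ required by Proposition \ref{prop:serredercompos} becomes $2(w-2) \geq w-3$, which is trivially true, so $\partial_{w-3} X_{w-2}$ is completely positive and hence so is $X_w$.

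There is essentially no obstacle beyond what Theorem \ref{thm:kkd1} already provides: the corollary is really a bookkeeping exercise that combines the new recurrence for the $6 \mid w$ case with the existing recurrences \eqref{eqn:d1rec1}, \eqref{eqn:d1rec2} for the two other residue classes. The only subtle point is verifying the vanishing-order inequality needed to invoke Proposition \ref{prop:serredercompos} in the $w \equiv 2 \pmod 6$ case, and that inequality is seen to hold with enormous slack because depth-one extremal forms vanish at the cusp to order roughly $w/6$, comfortably larger than $k/12 \approx w/12$.
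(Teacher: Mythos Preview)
Your proposal is correct and follows essentially the same approach as the paper's proof: both combine Theorem~\ref{thm:kkd1} (for the $6 \mid w$ case, together with Proposition~\ref{prop:dercompletepos} to pass from $X_w'$ back to $X_w$), the recurrence \eqref{eqn:d1rec1} with Proposition~\ref{prop:serredercompos} (for $w \equiv 2 \pmod 6$), and \eqref{eqn:d1rec2} (for $w \equiv 4 \pmod 6$). The only difference is organizational---you run a single strong induction split by residue class, whereas the paper inducts in blocks of three consecutive weights $w, w+2, w+4$ with $6 \mid w$---but the underlying ingredients and the key vanishing-order check $n_0 \geq k/12$ are identical.
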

\begin{proof}
The conjecture holds for $X_{6}$ and $X_{8}$ by Lemma \ref{lem:lowpos}, and $X_{10} = E_{4} X_{6}$ shows that $X_{10}$ is also completely positive.
Now, let $w \ge 12$ be an integer with $6 \mid w$ and assume that $X_{k}$ is completely positive for $k \leq w-2$.
Then \eqref{eqn:kkd1eq1}${}_{w}$ and Proposition \ref{prop:dercompletepos} imply that $X_{w}$ is also completely positive.
Combining it with Proposition \ref{prop:serredercompos} (recall that the vanishing order of $X_{w}$ at the cusp is $\frac{w}{6} > \frac{w-1}{12}$) and \eqref{eqn:d1rec1}${}_{w}$ shows $X_{w+2} \in \QM_{w+2}^{1,++}$, and we also get $X_{w+4} \in \QM_{w+4}^{1,++}$ from \eqref{eqn:d1rec2}, since $E_4$ has nonnegative Fourier coefficients.
\end{proof}

\begin{remark}
We also have the following relations (for $6|w$):
\begin{align}
    X_{w + 2}' &= \frac{5w}{72} X_{6} X_{w-2} + \frac{7w}{12} X_{8}X_{w-4}, \label{eqn:kkd1eq2}\\
    X_{w + 4}' &= 240 X_{6} X_{w} + \frac{7w}{72} X_{8}X_{w-2} + \frac{5w}{72} X_{10}X_{w-4} \label{eqn:kkd1eq3}
\end{align}
which can be proven similarly and also provide an alternative proof of Corollary \ref{cor:kkd1}.
\end{remark}

\subsection{Extremal forms of depth $2$}

For even $w \geq 4$ and $w\neq 6$, the depth $2$ (normalized) extremal forms $X_{w, 2}$ satisfy the following recurrence relations \cite[Proposition 6.2, p. 2259]{grabner2020quasimodular}\footnote{There's a minor error in \cite{grabner2020quasimodular}.
We need to replace $w^2$ with $(w+4)^{2}$ in the numerator to make sure that $X_{w+4, 2}$ is normalized, i.e. its leading coefficient is $1$.
We fix this in \eqref{eqn:d2eq1}.}: $X_{4, 2} = \frac{E_4 - E_2^2}{288}$ and for $4 \mid w$,
\begin{align}
    X_{w+4, 2} &= \frac{3(w+4)^2}{16(w+1)(w+2)^{2}(w+3)} \left(\frac{w(w+1)}{36} E_4 X_{w, 2} - \partial_{w-2}^{2}X_{w, 2}\right) \label{eqn:d2eq1} \\
    X_{w+2, 2} &= \frac{6}{w + 1} \partial_{w-2}X_{w, 2} \label{eqn:d2eq2} \\
    &= \frac{3w^2}{16(w^2 - 1)(w-6)^2} \left(\frac{(w-4)(w-5)}{36} E_4 X_{w-2,2} - \partial_{w-4}^{2}X_{w-2, 2}\right) \label{eqn:d2eq3}
\end{align} 
The vanishing order of $X_{w, 2}$ at the cusp is $\lfloor \frac{w}{4} \rfloor$.
Also, $X_{w,2}$ (for $4 \mid w$) is a solution of the differential equation
\begin{equation}
\label{eqn:d2ode2}
X_{w, 2}''' - \frac{w}{4} E_2 X_{w, 2}'' + \frac{w(w-1)}{4} E_{2}'X_{w, 2}' - \frac{w(w-1)(w-2)}{24} E_2'' X_{w, 2} = 0.
\end{equation}

We found the following (exceptional) identities that verify the conjecture for depth $2$ and weight $\leq 14$ that can be checked directly, although we could not find similar recurrence relations as \eqref{eqn:kkd1eq1} in the case of depth $2$ that may prove the conjecture completely.

\begin{proposition}
\label{prop:d2extposexceptional}
We have the following identities:
\begin{align}
    X_{8, 2}' &=  2X_{4, 2} X_{6, 1}, \label{eqn:dx82} \\
    X_{10, 2}' &=  \frac{8}{9} X_{4, 2} X_{8, 1} + \frac{10}{9} X_{6, 1}^2, \label{eqn:dx102} \\
    X_{12, 2}' &= 3 X_{6, 1} X_{8, 2}, \label{eqn:dx122} \\
    X_{14, 2}' &= 3 X_{4, 2} X_{12, 1}. \label{eqn:dx142}
\end{align}
Especially, $X_{w, 2}$ is completely positive for $w \leq 14$.
\end{proposition}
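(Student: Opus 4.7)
The plan is to verify each of \eqref{eqn:dx82}--\eqref{eqn:dx142} as a polynomial identity in $\mathbb{C}[E_2, E_4, E_6]$, and then extract complete positivity of $X_{w, 2}$ for $w \in \{8, 10, 12, 14\}$ via Proposition \ref{prop:dercompletepos}; the case $w = 4$ is already covered by Lemma \ref{lem:lowpos}.

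First, I would compute explicit polynomial expressions for $X_{8, 2}$, $X_{10, 2}$, $X_{12, 2}$, $X_{14, 2}$ in terms of $E_2, E_4, E_6$ by iterating the recurrences \eqref{eqn:d2eq1}--\eqref{eqn:d2eq3} from $X_{4, 2} = (E_4 - E_2^2)/288$. Differentiating via Ramanujan's identities \eqref{eqn:e2der}--\eqref{eqn:e6der} reduces each of \eqref{eqn:dx82}--\eqref{eqn:dx142} to a polynomial equality in $E_2, E_4, E_6$, which can be checked mechanically with SageMath, exactly as advertised for the analogous identities \eqref{eqn:kkd1eq1}--\eqref{eqn:kkd1eq3} in Theorem \ref{thm:kkd1}.

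For the ``especially'' clause, each $X_{w, 2}$ with $w \in \{8, 10, 12, 14\}$ is a cusp form: its vanishing order at $i\infty$ is $\lfloor w/4 \rfloor \geq 2$. By Lemma \ref{lem:lowpos} and Corollary \ref{cor:kkd1}, the factors $X_{4, 2}$, $X_{6, 1}$, $X_{8, 1}$, $X_{12, 1}$ that appear in \eqref{eqn:dx82}, \eqref{eqn:dx102}, \eqref{eqn:dx142} are all completely positive, and a product of completely positive forms is completely positive, since the Fourier coefficients of the product are convolutions of nonnegative sequences. Thus the right-hand sides of \eqref{eqn:dx82}, \eqref{eqn:dx102}, and \eqref{eqn:dx142} are nonnegative linear combinations of completely positive forms, and Proposition \ref{prop:dercompletepos} gives $X_{8, 2} \in \QM_{8}^{2, ++}$, $X_{10, 2} \in \QM_{10}^{2, ++}$, and $X_{14, 2} \in \QM_{14}^{2, ++}$. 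Feeding the now-established $X_{8, 2} \in \QM_{8}^{2, ++}$ back into \eqref{eqn:dx122} and applying Proposition \ref{prop:dercompletepos} once more yields $X_{12, 2} \in \QM_{12}^{2, ++}$.

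The main obstacle is not any individual step but a structural one. In the depth-$1$ setting, Theorem \ref{thm:kkd1} gives a \emph{uniform} decomposition of $X_w'$ as a fixed bilinear combination of lower-weight extremal forms for every admissible $w$, which feeds into a clean induction. Here, by contrast, the depth-$2$ decompositions \eqref{eqn:dx82}--\eqref{eqn:dx142} look genuinely exceptional: the shape of the product on the right-hand side changes from weight to weight, and no analogous weight-uniform recurrence is currently known. Consequently the proof itself is entirely computational once the identities are in hand, but its real interest lies in whether the pattern can be continued beyond $w = 14$, a question that the argument above leaves untouched.
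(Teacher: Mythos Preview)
Your proposal is correct and matches the paper's approach: the identities are checked directly as polynomial identities in $E_2, E_4, E_6$ (via SageMath), and complete positivity is deduced from Proposition~\ref{prop:dercompletepos} together with the already known complete positivity of the depth-$1$ and low-weight depth-$2$ factors. Your write-up simply unpacks the paper's one-line proof, including the bootstrap through \eqref{eqn:dx122} using $X_{8,2}\in\QM_8^{2,++}$.
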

\begin{proof}
These identities can be checked directly by expressing them as polynomials in $E_2, E_4, E_6$ and using Ramanujan's formula \eqref{eqn:e2der}-\eqref{eqn:e6der}; see also Appendix \ref{subsec:appendix_xd2}.
Complete positivity follows from the identities and Proposition \ref{prop:dercompletepos}.
\end{proof}

\section{8-dimensional inequalities}
\label{sec:8}

Now we are ready to give a new proof of Theorem \ref{thm:d8ineq}. 
Define
\begin{align}
    F(z) &= (E_2(z) E_4(z) - E_6(z))^2 \label{eqn:d8f} \\
    G(z) &= H_{2}(z)^{3} (2H_{2}(z)^{2} + 5H_{2}(z) H_{4}(z) + 5 H_{4}(z)^{2}). \label{eqn:d8g}
\end{align} 
Then we can check $F(z) = \Delta(z) \phi_0(z)$ and $G(z) = - 2 \Delta(z) \psi_S(z)$,
where the second identity on $G(z)$ follows from the Jacobi identity and \eqref{eqn:disctheta}.
Since $\Delta(it) > 0$, the inequalities \eqref{eqn:d8ineq1} and \eqref{eqn:d8ineq2} are equivalent to
\begin{align}
    F(it) + \frac{18}{\pi^2}G(it) &> 0, \label{eqn:d8ineq1new} \\
    F(it) - \frac{18}{\pi^2}G(it) &< 0. \label{eqn:d8ineq2new}
\end{align}
As already mentioned in \cite{romik2023viazovska}, $F(it) = 9 E_4'(it)^2 = 9 (240\sum_{n\geq 1}n \sigma_3(n) e^{-2 \pi n t})^2 > 0$, and $G(it) > 0$ follows from $\Theta_2(it) > 0$ and $\Theta_4(it) > 0$.
Note that $F$ can be also written as $F = 720^{2}X_{6, 1}^{2}$.

As we mentioned before, our proof of the ``hard'' inequality \eqref{eqn:d8ineq2new} is based on the following observations (Figure \ref{fig:d8graph}).

\begin{proposition}
\label{prop:d8limit}
\begin{equation}
    \label{eqn:d8lim}
    \lim_{t \to 0^+} \frac{F(it)}{G(it)} = \frac{18}{\pi^2}.
\end{equation}
\end{proposition}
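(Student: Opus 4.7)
The plan is to convert the limit at $t \to 0^+$ (the cusp $0$) into a limit at $t \to \infty$ (the cusp $i\infty$) via the modular $S$-transformation $z \mapsto -1/z$, where Fourier expansions give transparent asymptotics. Concretely, I will compute $F(-1/z)$ and $G(-1/z)$ in terms of $F(z), G(z)$ and auxiliary forms, specialize to $z = it$, and let $t \to \infty$.

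For $F = (E_2 E_4 - E_6)^2$, the quasimodular transformation \eqref{eqn:e2trans} introduces an $E_2$ correction: from \eqref{eqn:e2trans}--\eqref{eqn:e6trans} one gets
\[
E_2(-1/z)E_4(-1/z) - E_6(-1/z) = z^6(E_2 E_4 - E_6)(z) - \tfrac{6iz^5}{\pi}E_4(z),
\]
and squaring yields
\[
F(-1/z) = z^{12} F(z) - \tfrac{12 i z^{11}}{\pi}(E_2 E_4 - E_6)(z)E_4(z) - \tfrac{36 z^{10}}{\pi^2}E_4(z)^2.
\]
For $G$, since $H_2, H_4$ are honest weight-$2$ modular forms for $\Gamma(2)$ that swap (up to sign) under $S$ by \eqref{eqn:thetatransS}, no correction terms appear and a direct substitution gives
\[
G(-1/z) = -z^{10} H_4(z)^3\bigl(5 H_2(z)^2 + 5 H_2(z) H_4(z) + 2 H_4(z)^2\bigr).
\]

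Next I specialize to $z = it$, using $(it)^{10} = -t^{10}$, $(it)^{11} = -it^{11}$, $(it)^{12} = t^{12}$, to obtain
\begin{align*}
F(i/t) &= t^{12} F(it) - \tfrac{12 t^{11}}{\pi}(E_2 E_4 - E_6)(it) E_4(it) + \tfrac{36 t^{10}}{\pi^2}E_4(it)^2,\\
G(i/t) &= t^{10} H_4(it)^3\bigl(5 H_2(it)^2 + 5 H_2(it)H_4(it) + 2 H_4(it)^2\bigr).
\end{align*}
Dividing and cancelling $t^{10}$ converts the sought limit $\lim_{t \to 0^+} F(it)/G(it) = \lim_{t \to \infty} F(i/t)/G(i/t)$ into
\[
\lim_{t \to \infty} \frac{t^2 F(it) - \tfrac{12t}{\pi}(E_2 E_4 - E_6)(it)E_4(it) + \tfrac{36}{\pi^2}E_4(it)^2}{H_4(it)^3\bigl(5 H_2(it)^2 + 5 H_2(it) H_4(it) + 2 H_4(it)^2\bigr)}.
\]
The $q$-expansions with $q = e^{-2\pi t} \to 0$ now make everything transparent: $E_4(it), H_4(it) \to 1$, while $(E_2 E_4 - E_6)(it) = O(q)$ (so $F(it) = O(q^2)$) and $H_2(it) = O(q^{1/2})$ by \eqref{eqn:h2}. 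The first two terms in the numerator vanish and the third tends to $36/\pi^2$, while the denominator tends to $2$; the limit equals $\tfrac{18}{\pi^2}$.

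There is no real obstacle: the proof is essentially mechanical once the correct $S$-transformation identities are assembled. The only step requiring care is bookkeeping signs through the powers of $i$ — in particular, the $\left(\tfrac{6iz}{\pi}\right)^2 = -\tfrac{36 z^2}{\pi^2}$ from squaring, combined with $(it)^{10} = -t^{10}$, must conspire to produce the positive $+\tfrac{36}{\pi^2}$ that actually furnishes the limit.
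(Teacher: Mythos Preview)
Your proof is correct and follows essentially the same route as the paper's own argument: apply the $S$-transformation to convert $t\to 0^+$ into $t\to\infty$, expand $F(i/t)$ and $G(i/t)$ via the transformation laws for $E_2,E_4,E_6,H_2,H_4$, and read off the limit from the constant terms of the Fourier expansions. Your explicit sign bookkeeping through the powers of $i$ is slightly more detailed than the paper's presentation, but the computations and conclusions are identical.
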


\begin{proposition}
\label{prop:d8decrease}
The function $t \mapsto \frac{F(it)}{G(it)}$ is strictly decreasing on $t > 0$.
\end{proposition}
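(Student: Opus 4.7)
The plan is to convert monotonicity into a single Wronskian-type positivity and then exchange the non-modular $E_2$ for a common-weight Serre derivative. Since $\frac{d}{dt}\frac{F(it)}{G(it)} = -\frac{2\pi}{G(it)^2}\bigl(F'G - FG'\bigr)(it)$ and $G(it) > 0$, strict monotone decrease is equivalent to $F'G - FG' > 0$ on the imaginary axis. Now for \emph{any} integer $k$, writing $F' = \partial_k F + \tfrac{k}{12}E_2 F$ and $G' = \partial_k G + \tfrac{k}{12}E_2 G$ with the \emph{same} auxiliary weight cancels the $E_2$ contribution and gives
\[
F' G - F G' \;=\; (\partial_k F)\, G - F\,(\partial_k G),
\]
a homogeneous identity of weight $24$; the point is that we are free to choose $k$.

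I would take $k = 10$, which is at once the depth-preserving Serre weight for the genuine modular form $G$ of weight $10$ and a convenient choice for Serre-differentiating $F = 720^2\, X_{6,1}^2$ via the product rule \eqref{eqn:serre_prod} with the symmetric split $w_1 = w_2 = 5$. A short Ramanujan-identity computation gives $\partial_5 X_{6,1} = \tfrac{7}{12} X_{8,1}$, whence $\partial_{10} F = \tfrac{7}{6}\cdot 720^2\, X_{6,1} X_{8,1}$. For $G$, applying \eqref{eqn:thetaprodserreder} termwise to $G = 2H_2^5 + 5H_2^4 H_4 + 5H_2^3 H_4^2$ and then collapsing the result via the Jacobi identity $H_3 = H_2 + H_4$ yields $\partial_{10} G = \tfrac{5}{3} H_2^3(H_3^3 + H_4^3)$. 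Factoring out the strictly positive common factor $720^2\, H_2^3\, X_{6,1}$, the desired positivity reduces to the single weight-$12$ inequality
\[
7\, X_{8,1}\bigl(2H_2^2 + 5H_2 H_4 + 5H_4^2\bigr) \;-\; 10\, X_{6,1}\bigl(H_3^3 + H_4^3\bigr) \;>\; 0 \quad \text{on } i\bR_{>0}.
\]

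To dispatch this reduced inequality I would invoke the positivity theory of Section \ref{sec:posqmf}. Using \eqref{eqn:e4theta} and the Jacobi identity one first rewrites $2 H_2^2 + 5 H_2 H_4 + 5 H_4^2 = 2 E_4 + 3 H_3 H_4$ and $H_3^3 + H_4^3 = (H_2 + 2H_4)E_4$, and then tries to express the left-hand side as an explicit nonnegative linear combination of products of completely positive building blocks --- for instance, $E_4$, $H_2$, $H_3$, and depth-$1$ extremal forms such as $X_{6,1}, X_{8,1}, X_{10,1} = E_4 X_{6,1}, X_{12,1} = E_4 X_{8,1}$, all completely positive by Lemma \ref{lem:lowpos}, Corollary \ref{cor:kkd1}, and Proposition \ref{prop:poslevelincrease}. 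This is a finite polynomial identity in the ring of quasimodular forms on $\Gamma(2)$ that can be verified by the SageMath setup described in the introduction. The main obstacle is exhibiting the decomposition cleanly: the most obvious grouping $2 E_4\bigl[7 X_{8,1} - 5 X_{6,1}(H_2 + 2H_4)\bigr] + 21 X_{8,1} H_3 H_4$ is \emph{not} manifestly positive, since the bracketed factor starts with $-3q + O(q^2)$, so the cancellation between the two summands must be arranged at a finer level than this naive split.
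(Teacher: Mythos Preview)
Your reduction is correct through the point where you factor out $720^2\,H_2^3\,X_{6,1}$ and arrive at the weight-$12$ bracket
\[
B \;:=\; 7\,X_{8,1}\bigl(2H_2^2+5H_2H_4+5H_4^2\bigr)\;-\;10\,X_{6,1}\bigl(H_3^3+H_4^3\bigr).
\]
But the proof stops here: you propose to find a nonnegative decomposition of $B$ in terms of completely positive building blocks, observe that the obvious $2E_4[\,\cdots\,]+21X_{8,1}H_3H_4$ split fails, and leave the ``finer cancellation'' unperformed.  That is the genuine gap.

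The paper closes it in two independent ways, and either one supplies the missing idea.  In the first proof, the key observation you are missing is that $B$ itself \emph{factors further}: up to a positive constant one finds
\[
\cL_{1,0}\;=\;c\,H_2^3\,H_3^2\,H_4^2\,(E_2E_4-E_6)\Bigl(E_4-\tfrac12 E_2(H_2+2H_4)\Bigr),
\]
so after peeling off the positive factors one is left with a single \emph{weight-$4$} inequality $E_4-\tfrac12 E_2(H_2+2H_4)>0$ on $i\bR_{>0}$.  This is where the old-form identity \eqref{eqn:n21}, $H_2+2H_4=4E_2(2z)-2E_2(z)$, enters: the left-hand side becomes $(E_4(it)-E_4(2it))+(E_4(2it)-E_2(2it)^2)+(E_2(it)-E_2(2it))^2$, three manifestly nonnegative terms.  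Your substitution $H_3^3+H_4^3=(H_2+2H_4)E_4$ is already a step in this direction, but you did not push the algebra to discover the extra $H_3^2H_4^2$ factor hiding in $B$.

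The second proof in the paper avoids factorization entirely and is closer in spirit to the machinery you invoke from Section~\ref{sec:posqmf}: rather than decomposing $\cL_{1,0}$, one takes \emph{one more} Serre derivative.  Using the second-order equations $\partial_{10}^2 F=\tfrac56 E_4F+a\Delta X_{4,2}$ and $\partial_{10}^2 G=\tfrac56 E_4G-b\Delta H_2$ (equations \eqref{eqn:d8ssf}--\eqref{eqn:d8ssg}), the product rule gives $\partial_{22}\cL_{1,0}=(\partial_{10}^2F)G-F(\partial_{10}^2G)=\Delta(aX_{4,2}G+bH_2F)$, which is manifestly positive; then Corollary~\ref{cor:serrepos} finishes.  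This is exactly the kind of ``anti--Serre-derivative preserves positivity'' step you cite but do not deploy.
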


It is clear that the inequality \eqref{eqn:d8ineq2new} follows from Proposition \ref{prop:d8limit} and \ref{prop:d8decrease}.

\begin{proof}[Proof of Proposition \ref{prop:d8limit}]
We have
\[
\lim_{t \to 0^+} \frac{F(it)}{G(it)} = \lim_{t \to \infty} \frac{F(i/t)}{G(i/t)}.
\]
By using the transformation laws of Eisenstein series and the thetanull functions \eqref{eqn:e2trans}-\eqref{eqn:e6trans} and \eqref{eqn:thetatransS}, we get
\begin{align*}
    F\left(\frac{i}{t}\right) &= t^{12} F(it) - \frac{12t^{11}}{\pi} (E_2(it)E_4(it) - E_6(it))E_4(it) + \frac{36t^{10}}{\pi^2}E_4(it)^2, \\
    G\left(\frac{i}{t}\right) &= t^{10} H_{4}(it)^{3}(2H_{4}(it)^{2} + 5 H_{4}(it)H_{2}(it) + 5 H_{2}(it)^{2}).
\end{align*}
Since $F$, $E_2 E_4 - E_6$ and $H_2$ are cusp forms, we have $\lim_{t \to \infty}t^k A(it) = 0$ when $A(z)$ is one of these forms and $k \geq 0$.
From $\lim_{t \to \infty} E_4(it) = 1 = \lim_{t \to \infty}H_{4}(it)$, we get
\begin{align*}
    \lim_{t \to \infty} \frac{F(i/t)}{G(i/t)}
    &= \lim_{t \to \infty} \frac{t^{12} F(it) - \frac{12t^{11}}{\pi} (E_2(it)E_4(it) - E_6(it))E_4(it) + \frac{36t^{10}}{\pi^2}E_4(it)^2}{t^{10} H_{4}(it)^{3}(2H_{4}(it)^{2} + 5 H_{4}(it)H_{2}(it) + 5 H_{2}(it)^{2})} \\
    &= \lim_{t \to \infty} \frac{t^{2} F(it) - \frac{12t}{\pi} (E_2(it)E_4(it) - E_6(it))E_4(it) + \frac{36}{\pi^2}E_4(it)^2}{H_{4}(it)^{3}(2H_{4}(it)^{2} + 5 H_{4}(it)H_{2}(it) + 5 H_{2}(it)^{2})} \\
    &= \frac{18}{\pi^2}.
\end{align*}
\end{proof}

\begin{proof}[Proof of Proposition \ref{prop:d8decrease}]
It is enough to show that the derivative
\[
\frac{\dd}{\dd t} \left(\frac{F(it)}{G(it)}\right) = -2\pi \frac{F'(it)G(it) - F(it)G'(it)}{G(it)^2}
\]
is negative, which is equivalent to the positivity of (recall that $F(it) > 0$ and $G(it) > 0$)
\begin{equation}
    F' G - F G' = (\partial_{10}F) G - F (\partial_{10}G) =: \cL_{1, 0} \label{eqn:ineq3}
\end{equation}
that has weight $24$, level $\Gamma(2)$, and depth $2$.
$F$ and $G$ satisfy the following differential identities:
\begin{align}
    \partial_{10}^{2} F &= \frac{5}{6} E_4 F + a \Delta X_{4, 2}, \label{eqn:d8ssf} \\
    \partial_{10}^{2} G &= \frac{5}{6} E_4 G - b \Delta H_{2}, \label{eqn:d8ssg}
\end{align}
where $a = 12^3 \cdot 100$ and $b = 640$ (see also Appendix \ref{subsec:appendix_d8}).
Combining these with \eqref{eqn:serre_prod}, we get
\begin{equation}
    \partial_{22}\cL_{1, 0} = (\partial_{10}^{2}F) G - F (\partial_{10}^{2}G) = \Delta(a X_{4, 2} G + b H_{2} F) > 0.
\end{equation}
By Corollary \ref{cor:serrepos}, it is enough to show that $\cL_{1, 0}(it) > 0$ for sufficiently large $t > 0$.
This can be done by comparing the vanishing orders at the cusp.
From $E_2E_4 - E_6 = 3E_4' = 720 q + O(q^2)$, $H_2 = 16 q^{\frac{1}{2}} + O(q^{\frac{3}{2}})$, and $H_4 = 1 + O(q^{\frac{1}{2}})$, we have
\[
    F = 720^2 q^2 + O(q^3), \quad G = 16^3 \cdot 5 q^{\frac{3}{2}} + O(q^2)
\]
and
\[
\frac{F'}{F} = \frac{2 \cdot 720^2 q^2 + O(q^3)}{720^2 q^2 + O(q^3)} = 2 + O(q), \quad \frac{G'}{G} = \frac{\frac{3}{2} \cdot 16^3 \cdot 5 q^{\frac{3}{2}} + O(q^2)}{16^3 \cdot 5 q^{\frac{3}{2}} + O(q^2)} = \frac{3}{2} + O(q^{\frac{1}{2}}).
\]
Since \(2 > \frac{3}{2}\), we get $\frac{F'(it)}{F(it)} > \frac{G'(it)}{G(it)}$ for sufficiently large \(t > 0\), which is equivalent to \(\cL_{1, 0}(it) > 0\).
\end{proof}

\begin{remark}
For $k \in \mathbb{Z}$, let $L_{2, k} := \partial_{k}^{2} - \frac{k(k+2)}{144} E_{4}$ be the type $(k, k+4)$ modular linear differential operator defined in \cite{nagatomo2024modular} that is originated from \cite{kaneko1998supersingular}.
Then the identities \eqref{eqn:d8ssf} and \eqref{eqn:d8ssg} show $L_{2, 10}F > 0$ and $L_{2, 10}G < 0$.
\end{remark}

\begin{remark}
\label{rem:d8proof2}
An alternative proof the positivity of \(\cL_{1, 0}\) is given in Appendix \ref{subsec:appendix_d8}.
\end{remark}

\section{24-dimensional inequalities}
\label{sec:24}

We give a similar proof of Theorem \ref{thm:d24ineq}.
The \emph{easy} inequality \eqref{eqn:d24ineq1} is not as easy as \eqref{eqn:d8ineq1}, but it almost directly follows from Corollary \ref{cor:serrepos}.
The idea for the proof of the inequality \eqref{eqn:d24ineq2} is the same as that of \eqref{eqn:d8ineq2}, and \eqref{eqn:d24ineq3} is more involved.
We will abuse notation and write $F = -\Delta^2 \varphi$ and $G = -\Delta^2 \psi_S$ for the numerators of \eqref{eqn:d24forig} and \eqref{eqn:d24gorig}.
Then the inequalities \eqref{eqn:d24ineq1}, \eqref{eqn:d24ineq2} and \eqref{eqn:d24ineq3} are equivalent to
\begin{align}
    F(it) + \frac{432}{\pi^2} G(it) &> 0, \label{eqn:d24ineq1new} \\
    F(it) - \frac{432}{\pi^2} G(it) &< 0, \label{eqn:d24ineq2new} \\
    t^{10} \left(-\frac{F(i/t)}{\Delta(i/t)^2} + \frac{432}{\pi^2} \frac{G(i/t)}{\Delta(i/t)^2}\right) &\geq \frac{725760}{\pi} e^{2\pi t} \left(t - \frac{10}{3 \pi}\right). \label{eqn:d24ineq3new}
\end{align}

\subsection{``Easy'' inequality \eqref{eqn:d24ineq1new}}

As in the $d = 8$ case, we can prove \eqref{eqn:d24ineq1new} by proving the \emph{easy} inequalities $F(it) > 0$ and $G(it) > 0$ separately.
Especially, the second inequality directly follows from its expression (which is already mentioned in \cite[p. 1028]{cohn2017sphere}).
However, the other inequality $F(it) > 0$ is less trivial, although it follows from Corollary \ref{cor:serrepos} and the identity \eqref{eqn:serrederF16}.

\begin{lemma}
\label{lem:serrederF16}
We have
\begin{equation}
    \label{eqn:serrederF16}
    \partial_{14}F = 6706022400  \cdot X_{6, 1} X_{12, 1} \in \QM_{18}^{2,++}
\end{equation}
where \(X_{6, 1}\) (resp. \(X_{12, 1}\)) are normalized extremal quasimodular forms of weight 6 (resp. 12) and depth 1 (Section \ref{subsec:extremal_defex}).
\end{lemma}
\begin{proof}
See Appendix \ref{subsec:appendix_d24} for the direct proof of \eqref{eqn:serrederF16}.
(Complete) positivity follows from Corollary \ref{cor:kkd1}.
\end{proof}

\begin{corollary}
\label{cor:F16pos}
$F(it) > 0$ for all $t > 0$.
\end{corollary}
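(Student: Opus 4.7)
The plan is to apply Corollary \ref{cor:serrepos} to $F$ with Serre-derivative index $k = 14$. Lemma \ref{lem:serrederF16} already supplies the first hypothesis: combining its identity with the complete positivity of $X_{6,1}$ and $X_{12,1}$ established in Corollary \ref{cor:kkd1}, one has $\partial_{14} F \in \QM_{18}^{2,++} \subset \QM_{18}^{2,+}$.

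For the second hypothesis, I need $F(it) > 0$ for all sufficiently large $t$. By the last sentence of Corollary \ref{cor:serrepos} it suffices to check that $F$ has real Fourier coefficients with positive leading term. Realness is immediate from \eqref{eqn:d24forig}. For the sign, rather than expanding $49 E_2^2 E_4^3 - \dots$ by hand, I would read the leading coefficient off Lemma \ref{lem:serrederF16}. Since $X_{6,1} = q + O(q^2)$ and the normalized extremal form $X_{12,1}$ has vanishing order $\lfloor 12/6 \rfloor = 2$ with leading coefficient $1$, the lemma gives $\partial_{14} F = 6706022400\, q^3 + O(q^4)$. Writing $F = a_{n_0} q^{n_0} + O(q^{n_0+1})$ with $a_{n_0} \neq 0$, the definition $\partial_{14} F = F' - \tfrac{7}{6} E_2 F$ shows that the leading coefficient of $\partial_{14} F$ is $(n_0 - \tfrac{7}{6})\, a_{n_0}$, which is nonzero for every positive integer $n_0$; matching leading terms forces $n_0 = 3$ and $a_3 = 6706022400 \cdot \tfrac{6}{11} = 3657830400 > 0$, whence $F(it) \sim 3657830400\, e^{-6\pi t}$ as $t \to \infty$.

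Corollary \ref{cor:serrepos} then yields $F \in \QM_{16}^{2,+}$, i.e.\ $F(it) \geq 0$. To upgrade to the strict inequality claimed in the corollary, I would instead apply Proposition \ref{prop:serrepos} directly: for any $t_0$ large enough that $F(it_0) > 0$ (automatic from the asymptotic above), the proposition gives $F(it) > 0$ on the full interval $(0, t_0]$, and letting $t_0 \to \infty$ covers all $t > 0$.

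There is no substantive obstacle here once Lemma \ref{lem:serrederF16} is granted; the identity reduces the positivity of $F$ to the complete positivity of two depth-one extremal forms, which is the content of Corollary \ref{cor:kkd1}. The only routine bookkeeping is the match of leading Fourier coefficients, and even this is automatic from the Serre-derivative formula since the index $14$ is not an integer multiple of $12$.
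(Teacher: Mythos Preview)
Your proof is correct and follows essentially the same route as the paper: invoke Lemma \ref{lem:serrederF16} for the positivity of $\partial_{14}F$ and then apply Corollary \ref{cor:serrepos}/Proposition \ref{prop:serrepos}. The only cosmetic difference is that the paper simply records the Fourier expansion $F = 3657830400\,q^{3} + \cdots$ directly, whereas you recover the leading coefficient by matching against the Serre-derivative identity---a nice touch, and your explicit upgrade to the strict inequality via Proposition \ref{prop:serrepos} is a bit more careful than the paper's one-line appeal to Corollary \ref{cor:serrepos}.
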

\begin{proof}
The $q$-expansion of $F$ is
\[
    F = 3657830400 q^3 + 138997555200 q^4 + 2567796940800 q^5 + O(q^6)
\]
where its first nonzero Fourier coefficient is positive.
Now the positivity follows from Lemma \ref{lem:serrederF16} and Corollary \ref{cor:serrepos}.
\end{proof}

\begin{remark}
$F$ is a constant multiple of $f_{16}$ that appears in the family of Feigenbaum--Grabner--Hardin \cite[Proposition 5.1]{feigenbaum2021eigenfunctions}.
The authors already proved that the functions $f_{w}$ are \emph{completely} positive for $w \leq 94$ \cite[Remark 6.3]{feigenbaum2021eigenfunctions}, and they conjectured that all the forms in the family are completely positive \cite[Conjecture 1]{feigenbaum2021eigenfunctions}.
However, their proof uses approximations based on Jenkins and Rouse's explicit bound on Fourier coefficients \cite{jenkins2011bounds}.
\end{remark}

\subsection{``Hard'' inequality \eqref{eqn:d24ineq2new}}
\begin{figure}
    \centering
    \includesvg[width=0.6\textwidth]{figs/d24}
    \caption{Graph of the quotient $F(it)/G(it)$ as a function in $t > 0$.}
    \label{fig:d24graph}
\end{figure}

The previous approach we used in the proof of \eqref{eqn:d8ineq2new} also works for \eqref{eqn:d24ineq2new}, as we can expect from Figure \ref{fig:d24graph}.
It follows from the following two propositions.

\begin{proposition}
\label{prop:d24limit}
\begin{equation}
    \label{eqn:d24lim}
    \lim_{t\to 0^+} \frac{F(it)}{G(it)} = \frac{432}{\pi^2}.
\end{equation}
\end{proposition}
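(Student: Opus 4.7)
The plan is to mimic the proof of Proposition \ref{prop:d8limit}: change variable $t \mapsto 1/t$ so the limit becomes one as $t \to \infty$, apply the $S$-transformation laws \eqref{eqn:e2trans}--\eqref{eqn:e6trans} and \eqref{eqn:thetatransS} to compute $F(i/t)$ and $G(i/t)$ up to the dominant order in $t$, and identify $432/\pi^2$ as the ratio of the resulting leading coefficients. The only technical point is to organize the quasimodular form $F$ so that its $E_2$-dependent pieces are cleanly separated.

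First I would rewrite $F$ as a polynomial in $E_2$ with genuine modular coefficients:
\begin{align*}
F &= F_0 + E_2 F_1 + E_2^2 F_2, \\
F_0 &= -25 E_4^4 + 49 E_4 E_6^2, \quad F_1 = -48 E_4^2 E_6, \quad F_2 = 49 E_4^3 - 25 E_6^2,
\end{align*}
with $F_i \in \cM_{16 - 2i}$. Substituting $E_2(-1/z) = z^2 E_2(z) - 6iz/\pi$, squaring, and collecting, the $z^{16}$-pieces recombine into $z^{16} F(z)$, leaving
\[
F(-1/z) = z^{16} F(z) - \frac{6 i z^{15}}{\pi}\bigl(F_1 + 2 E_2 F_2\bigr)(z) - \frac{36 z^{14}}{\pi^2} F_2(z).
\]
Setting $z = it$ and using $i^{14} = -1$, $i^{15} = -i$, $i^{16} = 1$ gives
\[
F(i/t) = t^{16} F(it) - \frac{6 t^{15}}{\pi}\bigl(F_1 + 2 E_2 F_2\bigr)(it) + \frac{36 t^{14}}{\pi^2} F_2(it).
\]
A parallel application of \eqref{eqn:thetatransS} to the weight-$14$ form $G$ yields
\[
G(i/t) = t^{14} H_4(it)^5 \bigl(7 H_2^2 + 7 H_2 H_4 + 2 H_4^2\bigr)(it).
\]

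Next I let $t \to \infty$. By Corollary \ref{cor:F16pos}, $F$ is a cusp form (its Fourier expansion starts with $q^3$), so $t^{16} F(it) \to 0$. The form $F_1 + 2 E_2 F_2$ has constant term $-48 + 2(49 - 25) = 0$, hence is also a cusp form, and $t^{15}(F_1 + 2 E_2 F_2)(it) \to 0$. Since $F_2(it) \to 49 - 25 = 24$, $H_4(it) \to 1$, and $H_2(it) \to 0$, one concludes
\[
\frac{F(i/t)}{G(i/t)} \longrightarrow \frac{(36/\pi^2) \cdot 24}{1 \cdot 2} = \frac{432}{\pi^2}.
\]

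The argument is essentially bookkeeping; there is no significant obstacle. The two substantive checks are the cancellations of constant terms at $i\infty$ (for $F$ itself, and for the intermediate combination $F_1 + 2 E_2 F_2$), which are algebraic consequences of the specific coefficients $25, 48, 49$ appearing in \eqref{eqn:d24forig} and reflect the fact that $\varphi$ was engineered so that $-\Delta^2 \varphi$ vanishes to high order at the cusp.
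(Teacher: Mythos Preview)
Your proof is correct and follows exactly the approach the paper indicates (it says the computation ``follows the same lines as the proof of Proposition \ref{prop:d8limit}'' and omits the details you have written out). One small quibble: the fact that $F$ is a cusp form is not the content of Corollary \ref{cor:F16pos} but rather of the Fourier expansion displayed in its proof (or of the direct check $(-25+49)+(-48)+(49-25)=0$), so cite that instead.
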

\begin{proof}
    This can be proved similarly as Proposition \ref{prop:d8limit}.
    From the transformation laws \eqref{eqn:e2trans}-\eqref{eqn:e6trans} and \eqref{eqn:thetatransS}, we get
    \begin{align*}
        F\left(\frac{i}{t}\right) &= t^{16} F(it) - \frac{12 t^{15}}{\pi}(49 E_2 E_4^3 - 25 E_2 E_6^2 - 24 E_4^2 E_6)(it) + \frac{36 t^{14}}{\pi^2} (49 E_4^3 - 25 E_6^2)(it) \\
        G\left(\frac{i}{t}\right) &= t^{14} H_4(it)^5 (2 H_4(it)^2 + 7 H_4(it) H_2(it) + 7 H_2(it)^2)
    \end{align*}
    and
    \begin{align*}
        \lim_{t \to 0^+} \frac{F(it)}{G(it)} &= \lim_{t \to \infty} \frac{F(i/t)}{G(i/t)} \\
        &= \lim_{t \to \infty} \frac{t^2 F(it) - \frac{12 t}{\pi}(49 E_2 E_4^3 - 25 E_2 E_6^2 - 24 E_4^2 E_6)(it) + \frac{36}{\pi^2} (49 E_4^3 - 25 E_6^2)(it)}{H_4(it)^5 (2 H_4(it)^2 + 7 H_4(it) H_2(it) + 7 H_2(it)^2)}.
    \end{align*}
    Since $F$, $49 E_2 E_4^3 - 25 E_2 E_6^2 - 24 E_4^2 E_6$, and $H_2$ are cusp forms, the first two terms of the numerator and the terms including $H_2$ of the denominator do not contribute to the limit. Thus we get
    \[
        \frac{\frac{36}{\pi^2} \cdot (49 - 25)}{2} = \frac{432}{\pi^2}.
    \]
\end{proof}

\begin{proposition}
\label{prop:d24decrease}
The function $t \mapsto \frac{F(it)}{G(it)}$ is strictly decreasing on $t > 0$.
\end{proposition}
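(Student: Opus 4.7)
The plan is to adapt the strategy from the proof of Proposition \ref{prop:d8decrease} to the twenty-four-dimensional setting. Starting from
\[
\frac{\dd}{\dd t}\left(\frac{F(it)}{G(it)}\right) = -2\pi \cdot \frac{F'(it)G(it) - F(it)G'(it)}{G(it)^2},
\]
and using that $G(it) > 0$ for $t > 0$ (which follows immediately from the expression of $G$ as $\Theta_2^{20}(2\Theta_2^8 + 7\Theta_2^4 \Theta_4^4 + 7\Theta_4^8)$, a sum of positive monomials on the positive imaginary axis), the problem reduces to the homogeneous inequality
\[
\cL(it) := F'(it)G(it) - F(it)G'(it) > 0 \quad \text{for all } t > 0.
\]
The identity $\cL = (\partial_k F)G - F(\partial_k G)$, valid for any weight $k$ since the $E_2$-correction terms cancel, exhibits $\cL$ as a genuine quasimodular form (of weight $32$).

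I will apply the Serre-derivative positivity criterion of Corollary \ref{cor:serrepos} to $\cL$. Following the second proof of Proposition \ref{prop:d8decrease}, I take the Serre weight matching that of $G$, write $\cL = (\partial_{14} F)G - F(\partial_{14} G)$, and seek differential equations of the form
\begin{align*}
\partial_{14}^2 F &= \alpha E_4 F + R_F, \\
\partial_{14}^2 G &= \alpha E_4 G - R_G,
\end{align*}
sharing a common coefficient $\alpha$, with $R_F, R_G$ completely positive quasimodular forms (of weights $20$ and $18$ respectively). Applying the Serre product rule twice with the splits $16 + 14$ and $14 + 16$ gives
\[
\partial_{30} \cL = (\partial_{14}^2 F)G - F(\partial_{14}^2 G) = R_F \cdot G + R_G \cdot F,
\]
as the cross terms $(\partial_{14}F)(\partial_{14}G)$ cancel and the $\alpha E_4 F G$ contributions cancel as well. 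The right-hand side is manifestly positive on the imaginary axis. Combined with a direct verification that the leading (first nonzero) Fourier coefficient of $\cL$ is positive (which follows from the positive leading Fourier coefficient of $F$ noted in the proof of Corollary \ref{cor:F16pos} together with the positive leading Fourier coefficient of $G$), Corollary \ref{cor:serrepos} then delivers $\cL > 0$.

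The main obstacle is pinning down the precise identities \eqref{eqn:d24ssf} and \eqref{eqn:d24ssg} referenced in the introduction: finding a common coefficient $\alpha$ for which both $R_F$ and $R_G$ admit nonnegative decompositions. Natural building blocks are products of $\Delta$ with low-weight extremal forms from Lemma \ref{lem:lowpos} and Theorem \ref{thm:kkd1} (such as $\Delta X_{8,1}$, $\Delta E_4 X_{4,2}$, $\Delta X_{6,1}$), and thetanull monomials $H_2^a H_4^b$ whose complete positivity is manifest from their $q$-expansions. Because the ambient rings of quasimodular forms of level $\SL_2(\bZ)$ and $\Gamma(2)$ are polynomial rings in known generators, the existence and explicit form of such identities reduce to polynomial identities that can be verified mechanically via SageMath, exactly as in the eight-dimensional case. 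Should the clean common-$\alpha$ decomposition fail to materialize, a fallback is the factorization approach from the first proof of Proposition \ref{prop:d8decrease}: one can instead attempt to factor $\cL$ directly into manifestly positive factors using the linear identities \eqref{eqn:theta2e2}--\eqref{eqn:n21}, though the higher weights and more intricate shape of $F$ and $G$ make this route substantially more delicate.
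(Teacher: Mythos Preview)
Your plan is correct and matches the paper's proof essentially line for line: the paper takes $\alpha = \tfrac{14}{9}$, finds $R_G = 0$ (so $G$ solves $L_{2,14}G=0$ exactly) and $R_F = c\,\Delta X_{8,2}$ with $c = 5486745600$, whence $\partial_{30}\cL = c\,\Delta X_{8,2}\,G > 0$, and then checks the leading Fourier coefficient of $\cL$ to invoke Corollary~\ref{cor:serrepos}. Note that $R_F$ need only be positive on the imaginary axis (not completely positive), and the relevant depth-$2$ extremal form is $X_{8,2}$ rather than the depth-$1$ candidates you listed.
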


\begin{proof}
The idea is similar to that of Proposition \ref{prop:d8decrease}. It is enough to show that
\[
    F' G - F G' = (\partial_{14}F) G - F (\partial_{14}G) =: \cL_{1, 0}
\]
is positive, which has weight $32$, level $\Gamma(2)$, and depth $2$.
$F$ and $G$ satisfy the following differential equations similar to \eqref{eqn:d8ssf} and \eqref{eqn:d8ssg}:
\begin{align}
    \partial_{14}^{2} F &= \frac{14}{9} E_4 F + c \Delta X_{8, 2} \label{eqn:d24ssf} \\
    \partial_{14}^{2} G &= \frac{14}{9} E_4 G \label{eqn:d24ssg}
\end{align}
where $X_{8, 2}$ is the normalized extremal quasimodular form of weight 8 and depth 2, and $c = 2^{11} \cdot 3^7 \cdot 5^2 \cdot 7^2$ (see Appendix \ref{subsec:appendix_d24}).
If we take the Serre derivative $\partial_{30}$, by \eqref{eqn:serre_prod}, \eqref{eqn:d24ssf}, and \eqref{eqn:d24ssg}, we have
\begin{equation}
    \label{eqn:d24L20}
    \partial_{30}\cL_{1, 0} = \cL_{2, 0} := (\partial_{14}^{2} F) G - F (\partial_{14}^{2}G) = c \Delta X_{8, 2}G > 0
\end{equation}
since $X_{8, 2}$ is (completely) positive by Proposition \ref{prop:d2extposexceptional}.
As in the proof of Proposition \ref{prop:d8decrease}, to prove $\cL_{1, 0}(it) > 0$ for sufficiently large $t$, it is enough to show that the vanishing order of $F$ at infinity is larger than that of $G$.
The vanishing order of $F$ is 3 and that of $G$ is $\frac{5}{2}$, hence $\cL_{1, 0}(it)$ is positive for sufficiently large $t > 0$.
Hence Corollary \ref{cor:serrepos} applies and we get the positivity of $\cL_{1, 0}$.
\end{proof}

\begin{remark}
As in the $d = 8$ case, \eqref{eqn:d24ssf} and \eqref{eqn:d24ssg} describe the action of the operator $L_{2, 14}$ of type $(14, 18)$ in \cite{kaneko1998supersingular,nagatomo2024modular} on $F$ and $G$. Especially, $G$ is a solution of the linear differential equation $L_{2, 14} G = 0$.
\end{remark}

\subsection{``Harder'' inequality \eqref{eqn:d24ineq3new}}
\label{subsec:d24harder}

The last inequality \eqref{eqn:d24ineq3new} is more involved than the previous inequalities because of the presence of the non-modular terms $t^{10}$ and $e^{2 \pi t}$.
We first replace the exponential term $e^{2 \pi t}$ with $1 / \Delta$ using the following inequality.
\begin{lemma}
\label{lem:discineq}
For $t > 0$, we have
\begin{equation}
    \Delta(it) < e^{-2 \pi t}. \label{eqn:discineq}
\end{equation}
\end{lemma}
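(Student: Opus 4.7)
The plan is to use the Euler product (or equivalently, the $\eta$-product) representation of $\Delta$, which is recalled at the end of Section~\ref{sec:prelim}:
\[
\Delta(z) = q \prod_{n \geq 1}(1-q^n)^{24}, \qquad q = e^{2\pi i z}.
\]
Specializing to $z = it$ with $t > 0$ gives $q = e^{-2\pi t} \in (0,1)$, so
\[
\Delta(it) = e^{-2\pi t}\prod_{n \geq 1}\bigl(1 - e^{-2\pi n t}\bigr)^{24}.
\]
Thus the claimed inequality $\Delta(it) < e^{-2\pi t}$ is equivalent to the elementary bound
\[
\prod_{n \geq 1}\bigl(1 - e^{-2\pi n t}\bigr)^{24} < 1.
\]

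For each $n \geq 1$ and $t > 0$ we have $0 < e^{-2\pi n t} < 1$, hence $0 < 1 - e^{-2\pi n t} < 1$, and so every factor in the infinite product lies strictly between $0$ and $1$. In particular the very first factor satisfies $(1 - e^{-2\pi t})^{24} < 1$, and the subsequent factors are all positive and at most $1$. Since the product converges to a positive real number (as $\eta(it) \neq 0$), it is bounded above by $(1 - e^{-2\pi t})^{24} < 1$. This gives the strict inequality.

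There is essentially no obstacle here: once one invokes the product formula, the argument reduces to the trivial observation that a convergent product of strictly positive factors, one of which is strictly less than $1$, is itself strictly less than $1$. The only thing worth being careful about is the convergence of the infinite product (so that the bound passes from finite partial products to the limit), which is standard and follows from the fact that $\sum_{n \geq 1} e^{-2\pi n t} < \infty$ for $t > 0$.
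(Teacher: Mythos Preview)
Your proof is correct and is essentially identical to the paper's: both invoke the product expansion $\Delta(it)=e^{-2\pi t}\prod_{n\ge 1}(1-e^{-2\pi n t})^{24}$ and observe that each factor lies in $(0,1)$, so the product is strictly less than $1$. The paper states this in one line; you have simply spelled out the convergence and strictness details more carefully.
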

\begin{proof}
This directly follows from the product formula of $\Delta$,
\[
    \Delta(it) = e^{-2 \pi t} \prod_{n \geq 1} (1 - e^{-2 \pi n t})^{24} < e^{-2 \pi t}.
\]
\end{proof}
The inequality \eqref{eqn:d24ineq3new} is true for $1 \leq t \leq \frac{10}{3\pi}$, since the left hand side (resp. the right hand side) is nonnegative (resp. nonpositive) on this range (by \eqref{eqn:d24ineq2new}).
Hence it is enough to show for $t > \frac{10}{3 \pi}$.
On this range, we can bound the right hand side of \eqref{eqn:d24ineq3new} with \eqref{eqn:discineq}
\[
    \frac{725760}{\pi} e^{2 \pi t} \left(t - \frac{10}{3 \pi}\right) < \frac{725760}{\pi} \frac{1}{\Delta(it)} \left(t - \frac{10}{3 \pi}\right) = \frac{725760}{\pi} \frac{t^{12}}{\Delta(i/t)} \left(t - \frac{10}{3 \pi}\right)
\]
and the inequality reduces to
\begin{align*}
    t^{10} \left(- \frac{F(i/t)}{\Delta(i/t)^2} + \frac{432}{\pi^2} \frac{G(i/t)}{\Delta(i/t)^2}\right) &> \frac{725760}{\pi} \frac{t^{12}}{\Delta(i/t)} \left(t - \frac{10}{3 \pi}\right) \nonumber \\
    \Leftrightarrow \frac{1}{t^2} \left(- \frac{F(i/t)}{\Delta(i/t)} + \frac{432}{\pi^2} \frac{G(i/t)}{\Delta(i/t)}\right) &> \frac{725760}{\pi} \left(t - \frac{10}{3 \pi}\right)
\end{align*}
for $t > \frac{10}{3 \pi}$.
If we replace $t$ by $1/t$, the last inequality becomes
\begin{equation}
    t^{3} \left( - \frac{F(it)}{\Delta(it)} + \frac{432}{\pi^2} \frac{G(it)}{\Delta(it)} \right) > \frac{725760}{\pi} \left(1 - \frac{10 t}{3 \pi} \right)
    \Leftrightarrow \frac{432}{\pi^2} - \frac{F(it)}{G(it)} >  \frac{725760\Delta(it)}{G(it)} \left(\frac{1}{\pi t^3} - \frac{10}{3 \pi^2 t^2}\right) \label{eqn:d24ineq3new2}
\end{equation}
for $0 < t < \frac{3\pi}{10}$.
From Proposition \ref{prop:d24decrease}, we know that the left hand side of \eqref{eqn:d24ineq3new2} is monotone increasing in $t$.
Our main observation is that the difference between two sides of \eqref{eqn:d24ineq3new2} is also monotone increasing (Figure \ref{fig:d24harder_mono}).

\begin{figure}[t]
    \centering
    \includesvg[width=0.6\textwidth]{figs/d24harder_mono.svg}
    \caption{Graph of $\mathrm{LHS}(t)$, $\mathrm{RHS}(t)$, and $g(t) = \mathrm{LHS}(t) - \mathrm{RHS}(t)$ of \eqref{eqn:d24ineq3new2}.}
    \label{fig:d24harder_mono}
\end{figure}

\begin{proposition}
\label{prop:d24harder}
The function
\[
    g(t) := \frac{432}{\pi^2} - \frac{F(it)}{G(it)} - \frac{725760\Delta(it)}{G(it)} \left(\frac{1}{\pi t^3} - \frac{10}{3 \pi^2 t^2}\right) 
\]
is monotone increasing in $t$ for $0 < t < \frac{3\pi}{10}$ and $\lim_{t \to 0^+} g(t) = 0$.
Especially, we have $g(t) > 0$ for all $0 < t < \frac{3\pi}{10}$.
\end{proposition}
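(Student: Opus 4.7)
The plan proceeds in two parts: computing the limit $\lim_{t\to 0^+} g(t) = 0$, and establishing monotonicity on $(0, 3\pi/10)$; together these imply $g(t) > 0$.

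For the limit, I would expand $F(it), G(it), \Delta(it)$ asymptotically as $t \to 0^+$ using the $S$-transformation laws for $E_2, E_4, E_6, H_2, H_4$. Since $F$ is quasimodular of depth $2$, its transformation produces correction terms quadratic in $E_2(is) = 1 + O(e^{-2\pi s})$, yielding
\[
F(it) = \frac{864}{\pi^2 t^{14}} + \left[\frac{2177280}{\pi^2 t^{14}} - \frac{725760}{\pi t^{15}}\right] e^{-2\pi/t} + O(e^{-4\pi/t}),
\]
with an analogous (but simpler) theta-function computation giving $G(it) = 2\, t^{-14} - 560\, t^{-14} e^{-2\pi/t} + O(e^{-4\pi/t})$. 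Careful division then reveals that the leading $e^{-2\pi/t}$ correction of $F(it)/G(it) - 432/\pi^2$ exactly matches the leading $e^{-2\pi/t}$ behavior of $725760\,\Delta(it)\, p(t)/G(it)$ (using $\Delta(it) = t^{-12}e^{-2\pi/t}(1 + O(e^{-2\pi/t}))$), so the two pieces cancel in $g(t)$ and $g(t) = O(e^{-4\pi/t}) \to 0$. This cancellation is precisely the algebraic meaning of the specific constants $725760$ and $10/(3\pi)$ appearing in the original inequality \eqref{eqn:d24ineq3}.

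For monotonicity, differentiating $g(t) = \frac{432}{\pi^2} - \frac{F(it) + 725760\Delta(it) p(t)}{G(it)}$ gives
\[
G(it)^2 g'(t) = 2\pi\, \cL_{1,0}(it) + 1451520\pi\, p(t)\,\Delta(it)\bigl(E_2 G - G'\bigr)(it) - 725760\,\Delta(it)\, p'(t)\, G(it),
\]
where the first term is positive by Proposition \ref{prop:d24decrease}. I would then use the generalized Serre derivative $\partial_k = -\tfrac{1}{2\pi}\tfrac{d}{dt} - \tfrac{k}{12}E_2(it)$ from Remark \ref{rem:serrepos}, together with the identity
\[
\partial_{12}\bigl(\Delta(it)\phi(t)\bigr) = -\tfrac{1}{2\pi}\Delta(it)\phi'(t)
\]
(which follows from $\partial_{12}\Delta = 0$) and the modular differential equations \eqref{eqn:d24ssf}--\eqref{eqn:d24ssg}, to rewrite $G(it)^2 g'(t)$ as a sum of manifestly nonnegative pieces on $(0, 3\pi/10)$. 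As an alternative route, one may apply the generalized form of Proposition \ref{prop:serrepos} directly to $g(t)$: choosing $k$ so that $\partial_k g \le 0$ on $(0, 3\pi/10)$ makes $g(t)/\Delta(it)^{k/12}$ monotone increasing, and combined with $g(3\pi/10) > 0$ (which holds since $p(3\pi/10) = 0$ and Proposition \ref{prop:d24decrease} applies), this yields $g(t) > 0$.

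The main obstacle is the non-modular factor $p(t)$ together with the weight mismatch between $F$ (weight $16$) and $G$ (weight $14$): the Serre-derivative calculus from Section \ref{sec:posqmf} cannot be applied off the shelf, and one must combine the generalized Serre derivatives from Remark \ref{rem:serrepos} with explicit polynomial manipulations of $p(t), p'(t)$. The algebraic identities required are directly verifiable using the SageMath code of the paper, in the same spirit as the proofs of Proposition \ref{prop:d24decrease} and Corollary \ref{cor:F16pos}.
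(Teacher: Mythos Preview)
Your limit argument is correct but does more than necessary: since $\Delta|_{12}S = \Delta$ is a cusp form while $G|_{14}S$ is not, the term $\Delta(it)p(t)/G(it)$ is $O(t^{-1}e^{-2\pi/t})$ and vanishes as $t\to 0^+$ without any cancellation; combined with Proposition~\ref{prop:d24limit} this already gives $g(0^+)=0$. The stronger claim $g(t)=O(e^{-4\pi/t})$ is not needed.

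The monotonicity plan, however, has a real gap. Your expression for $G^{2}g'$ is correct and coincides with $2\pi\widetilde{\cL}_{1,0}$ in the paper's notation, but the expectation that one pass with the Serre derivative identity $\partial_{12}(\Delta\phi)=-\tfrac{1}{2\pi}\Delta\phi'$ and \eqref{eqn:d24ssf}--\eqref{eqn:d24ssg} will leave ``manifestly nonnegative pieces'' is not borne out: of your three summands only $\cL_{1,0}$ and $-725760\,\Delta\,p'\,G$ are positive on $(0,3\pi/10)$, while the sign of $E_{2}G-G'=-\partial_{12}G$ is not controlled there. What the paper actually does is take one further Serre derivative $\partial_{30}\widetilde{\cL}_{1,0}$ (with boundary condition $\widetilde{\cL}_{1,0}(\tfrac{3\pi i}{10})>0$), after which $\Delta G$ factors out thanks to \eqref{eqn:d24ssg}; but the remaining factor $h(t)$ in \eqref{eqn:d24ineq3fac} is \emph{still} not manifestly positive. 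The key step you are missing is the substitution $t\mapsto 1/t$, which converts $h$ into a nonhomogeneous expression \eqref{eqn:htinv} that can then be bounded using the auxiliary completely positive forms $J_{1},J_{2}$ of Lemma~\ref{lem:auxineq} and finally reduced to the positivity of $J_{3}=E_{2}E_{4}-\tfrac{1}{10}E_{6}-\tfrac{9}{10}E_{4}$, proved via Fourier coefficient signs and the special value $E_{2}(i)=3/\pi$. None of this chain is captured by ``explicit polynomial manipulations of $p(t),p'(t)$''.

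Your alternative route is also flawed as stated: if $\partial_{k}g\le 0$ then $g/\Delta^{k/12}$ is increasing, so knowing $g(3\pi/10)>0$ gives only an \emph{upper} bound on $(0,3\pi/10)$. To get a lower bound you would instead need $\lim_{t\to 0^{+}}g(t)/\Delta(it)^{k/12}\ge 0$, which for $k>0$ is an indeterminate $0/0$ and brings you back to a delicate rate comparison.
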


\begin{proof}
After writing the limit as $\lim_{t \to 0^+} g(t) = \lim_{t \to \infty} g(1/t)$, we can compute the limit from Proposition \ref{prop:d24limit} and the fact that $\Delta|_{12}S = \Delta$ is a cusp form but $G|_{14}S = -H_4^5(7 H_2^2 + 7 H_2 H_4 + 2 H_4^2)$ is not, so the third term in $g(t)$ vanishes as $t \to 0^+$.
We will omit the details and focus on the monotonicity part.
We have
\[
    \frac{\dd}{\dd t} \left(\frac{F(it)}{G(it)}\right) = -2 \pi \frac{\cL_{1, 0}(it)}{G(it)^2}
\]
and by $\Delta' = E_2 \Delta$,
\begin{align*}
    \frac{\dd}{\dd t} \left[\frac{\Delta(it)}{G(it)} \left(\frac{1}{\pi t^3} - \frac{10}{3 \pi^2 t^2}\right)\right] 
    &= (-2\pi) \frac{\Delta(it)(E_2(it)G(it) - G'(it))}{G(it)^2} \left(\frac{1}{\pi t^3} - \frac{10}{3 \pi^2 t^2}\right) + \frac{\Delta(it)}{G(it)} \left(-\frac{3}{\pi t^4} + \frac{20}{3 \pi^2 t^3}\right) \\
    &= (2\pi) \frac{\Delta(it)}{G(it)^2} \left[(\partial_{12}G)(it) \left(\frac{1}{\pi t^3} - \frac{10}{3 \pi^2 t^2}\right) - G(it) \left(\frac{3}{2\pi^2 t^4} - \frac{10}{3 \pi^3 t^3}\right) \right],
\end{align*}
so $\dd g / \dd t > 0$ if and only if (after factoring out $1 / G^2$)
\begin{small}
\[
    \widetilde{\cL}_{1, 0}(it) := \cL_{1, 0}(it) - 725760\Delta(it) \left[(\partial_{12}G)(it) \left(\frac{1}{\pi t^3} - \frac{10}{3 \pi^2 t^2}\right) - G(it) \left(\frac{3}{2\pi^2 t^4} - \frac{10}{3 \pi^3 t^3}\right) \right] > 0.
\]
\end{small}
We have $\widetilde{\cL}_{1, 0}(\frac{3\pi i}{10}) > 0$, since Proposition \ref{prop:d24decrease} gives $\cL_{1, 0}(\frac{3 \pi i}{10}) > 0$ and when $t = \frac{3 \pi}{10}$,
\[
    (\partial_{12}G)(it) \left(\frac{1}{\pi t^3} - \frac{10}{3 \pi^2 t^2}\right) - G(it) \left(\frac{3}{2\pi^2 t^4} - \frac{10}{3 \pi^3 t^3}\right) = -G\left(\frac{3\pi i}{10}\right) \cdot \left(\frac{5000}{81 \pi^6}\right) < 0.
\]
From Proposition \ref{prop:serrepos} (see also Remark \ref{rem:serrepos}), it is enough to show that its Serre derivative
\[
    \partial_{30}\widetilde{\cL}_{1, 0}(it) = \widetilde{\cL}_{1, 0}'(it) - \frac{5}{2} E_2(it) \widetilde{\cL}_{1, 0}(it) = - \frac{1}{2\pi} \frac{\dd \widetilde{\cL}_{1, 0}(it)}{\dd t} - \frac{5}{2} E_2(it) \widetilde{\cL}_{1, 0}(it)
\]
is positive (i.e. $t \mapsto \widetilde{\cL}_{1,0}(it) / \eta(it)^{60}$ is a monotone decreasing function in $t$) on $0 < t < \frac{3\pi}{10}$.
Recall $\partial_{30}\cL_{1, 0} = c \Delta X_{8, 2} G$ \eqref{eqn:d24L20}.
Using \eqref{eqn:serre_prod}, $\partial_{12} \Delta = 0$, and \eqref{eqn:d24ssg}, one can check that the Serre derivative of the second term of $\widetilde{\cL}_{1, 0}$ is $725760 \Delta$ times
% \begin{footnotesize}
\begin{align}
    &\partial_{18} \left[(\partial_{12}G)(it) \left(\frac{1}{\pi t^3} - \frac{10}{3 \pi^2 t^2}\right) - G(it) \left(\frac{3}{2\pi^2 t^4} - \frac{10}{3 \pi^3 t^3}\right) \right] \nonumber \\
    % &= (\partial_{14}\partial_{12}G)(it) \left(\frac{1}{\pi t^3} - \frac{10}{3 \pi^2 t^2}\right) + (\partial_{12}G)(it) \partial_{4} \left(\frac{1}{\pi t^3} - \frac{10}{3 \pi^2 t^2}\right) \\
    % &\quad + (\partial_{12}G)(it) \left(-\frac{3}{2\pi^2 t^4} + \frac{10}{3 \pi^3 t^3}\right) + G(it)\partial_{6} \left(-\frac{3}{2\pi^2 t^4} + \frac{10}{3 \pi^3 t^3}\right) \\
    % &= \left(\partial_{14}^{2} G + \frac{1}{3} E_{2} \partial_{14} G + \left(\frac{E_{2}^{2} - E_{4}}{72}\right)G\right) \left(\frac{1}{\pi t^3} - \frac{10}{3 \pi^2 t^2}\right) \\
    % &\quad + (\partial_{12}G)(it) \left(\frac{3}{2 \pi^2 t^4} - \frac{10}{3 \pi^3 t^3}\right) - (\partial_{12}G)(it) \frac{1}{3} E_2(it) \left(\frac{1}{\pi t^3} - \frac{10}{3 \pi^2 t^2}\right) \\
    % &\quad + (\partial_{12}G)(it) \left(-\frac{3}{2\pi^2 t^4} + \frac{10}{3 \pi^3 t^3}\right) \\
    % &\quad + G(it) \left(-\frac{3}{\pi^3 t^5} + \frac{5}{\pi^4 t^4} - \frac{1}{2}E_2(it)\left(-\frac{3}{2 \pi^2 t^4} + \frac{10}{3 \pi^3 t^3}\right)\right) \\
    % &= \left[\left(\frac{14}{9}E_4 + \frac{E_2^2 - E_4}{72} - \frac{1}{18}E_2^2\right)\left(\frac{1}{\pi t^3} - \frac{10}{3 \pi^2 t^2}\right) \right. \\
    % &\quad\left.\left(-\frac{3}{\pi^3 t^5} + \frac{5}{\pi^4 t^4} - \frac{1}{2}E_2(it)\left(-\frac{3}{2 \pi^2 t^4} + \frac{10}{3 \pi^3 t^3}\right)\right) \right] G(it) \\
    &= \left[\frac{37 E_4(it) - E_2(it)^2}{24}\left(\frac{1}{\pi t^3} - \frac{10}{3 \pi^2 t^2}\right) + E_2(it)\left(\frac{3}{4 \pi^2 t^4} - \frac{5}{3 \pi^3 t^3}\right) - \left(\frac{3}{\pi^3 t^5} - \frac{5}{\pi^4 t^4}\right)\right] G(it), \label{eqn:S18}
\end{align}
% \end{footnotesize}
so $\Delta G$ factors out from $\partial_{30}\widetilde{\cL}_{1,0}(it) > 0$ and it reduces to the positivity of
% \begin{footnotesize}
\begin{equation}
    \label{eqn:d24ineq3fac}
    7560 X_{8, 2}(it) - 
    \frac{37 E_4(it) - E_2(it)^2}{24} \left(\frac{1}{\pi t^3} - \frac{10}{3 \pi^2 t^2}\right) - E_2(it)\left(\frac{3}{4 \pi^2 t^4} - \frac{5}{3 \pi^3 t^3}\right) + \left(\frac{3}{\pi^3 t^5} - \frac{5}{\pi^4 t^4}\right).
\end{equation}
% \end{footnotesize}
Let $h(t)$ be the above function in \eqref{eqn:d24ineq3fac}.
We have (see Table \ref{tab:extform})
\[
    7560 X_{8, 2} = \frac{-7 E_2^2 E_4 + 2 E_2 E_6 + 5 E_4^2}{48}
\]
and similarly
\[
    7560 X_{8, 2}|_S = 7560 \left(X_{8, 2} + \frac{7 E_2 E_4 - E_6}{30240 \pi t} - \frac{E_4}{1440 \pi^2 t^2}\right) = \frac{-7 E_2^2 E_4 + 2 E_2 E_6 + 5 E_4^2}{48} + \frac{7 E_2 E_4 - E_6}{4 \pi t} - \frac{21 E_4}{4\pi^2 t^2}.
\]
Then
% \begin{footnotesize}
\begin{align}
    &t^{-8}h\left(\frac{1}{t}\right) \nonumber \\
    &= 7560 \left(X_{8, 2}(it) + \frac{7 E_2(it) E_4(it) - E_6(it)}{30240 \pi t} - \frac{E_4(it)}{1440 \pi^2 t^2}\right) \nonumber \\
    &\quad - \frac{1}{24} \left(37 E_4(it) - E_2(it)^2 + \frac{12 E_2(it)}{\pi t} - \frac{36}{\pi^2 t^2}\right) \left(\frac{1}{\pi t} - \frac{10}{3 \pi^2 t^2}\right) \nonumber \\
    &\quad - \left(- E_2(it) + \frac{6}{\pi t}\right) \left(\frac{3}{4 \pi^2 t^2} - \frac{5}{3 \pi^3 t^3}\right) + \left(\frac{3}{\pi^3 t^3} - \frac{5}{\pi^4 t^4}\right) \nonumber \\
    &= 7560 X_{8, 2}(it) \nonumber \\
    &\quad + \frac{1}{\pi t} \left(\frac{7 E_2(it) E_4(it) - E_6(it)}{4} - \frac{37 E_4(it) - E_2(it)^2}{24}\right) + \frac{1}{\pi^2 t^2} \left(- \frac{4 E_4(it) + 5 E_2(it)^2}{36} + \frac{E_2(it)}{4}\right) \label{eqn:htinv}
\end{align}
% \end{footnotesize}
and since $X_{8, 2}$ is (completely) positive (Proposition \ref{prop:d2extposexceptional}), it is enough to show that \eqref{eqn:htinv} is positive, i.e. show the following inhomogeneous inequality (after factoring out $1 /\pi t$)
% \begin{small}
\begin{equation}
    \label{eqn:d24hardernonhomo}
    \frac{7 E_2(it) E_4(it) - E_6(it)}{4} - \frac{37 E_4(it) - E_2(it)^2}{24} - \frac{1}{\pi t} \left(\frac{5 E_2(it)^{2} + 4 E_{4}(it)}{36} - \frac{1}{4} E_{2}(it)\right) > 0
\end{equation}
% \end{small}
for $t \geq \frac{10}{3 \pi}$.
We can further reduce it to an inequality with only quasimodular terms (i.e. no rational terms) with the following lemma.
\begin{lemma}
\label{lem:auxineq}
The following (nonhomogeneous) quasimodular forms are completely positive:
\begin{align}
    J_{1} &= \frac{5}{36} E_{2}^{2} + \frac{1}{9} E_{4} - \frac{1}{4} E_{2}, \\
    J_{2} &= E_{2} - E_{6}.
\end{align}
\end{lemma}
\begin{proof}
By using the Fourier expansions of $E_{2}$, $E_{4}$, $E_{6}$, and \eqref{eqn:e2der}, we can compute the Fourier expansions of the above forms explicitly as
\begin{align*}
    J_{1} &= \frac{5}{3} E_{2}' -\frac{1}{4} E_{2} + \frac{1}{4} E_{4} = \sum_{n \geq 1} (60 \sigma_{3}(n) - 40 n\sigma_{1}(n) + 6 \sigma_{1}(n)) q^{n}, \\
    J_{2} &= \sum_{n \geq 1} (504 \sigma_{5}(n) - 24 \sigma_{1}(n)) q^{n}.
\end{align*}
The complete positivity of $J_{1}$ follows from the trivial estimates $\sigma_{3}(n) \ge n^{3}$ and $\sigma_{1}(n) \leq 1 + 2 + \cdots + n = \frac{n(n+1)}{2} \leq n^2$, and that of $J_{2}$ follows from $504 \sigma_{5}(n) - 24 \sigma_{1}(n) = \sum_{d|n} (504 d^{5} - 24 d) > 0$.
\end{proof}
By Lemma \ref{lem:auxineq}, for $t \geq \frac{10}{3 \pi}$ we have
% \begin{small}
\begin{align}
    &\frac{7 E_2(it) E_4(it) - E_6(it)}{4} - \frac{37 E_4(it) - E_2(it)^2}{24}  - \frac{1}{\pi t} \left(\frac{5E_{2}(it)^{2} + 4 E_{4}(it)}{36}  - \frac{E_{2}(it)}{4}\right) \nonumber \\
    &\ge \frac{7 E_2(it) E_4(it) - E_6(it)}{4} - \frac{37 E_4(it) - E_2(it)^2}{24} - \frac{3}{10} \left(\frac{5E_{2}(it)^{2} + 4 E_{4}(it)}{36}  - \frac{E_{2}(it)}{4}\right) \label{eqn:usej1pos} \\
    &= \frac{7 E_2(it) E_4(it) - E_6(it)}{4} - \frac{63}{40} E_{4}(it) + \frac{3}{40} E_{2}(it) \nonumber \\
    &> \frac{7 E_2(it) E_4(it) - E_6(it)}{4} - \frac{63}{40} E_{4}(it) + \frac{3}{40} E_{6}(it) \label{eqn:usej2pos} \\
    &= \frac{7}{4} \left(E_{2}(it) E_{4}(it) - \frac{1}{10}E_{6}(it) - \frac{9}{10}E_{4}(it)\right) =: \frac{7}{4} J_{3} \label{eqn:d24hardernonhomo2}
\end{align}
% \end{small}
where the positivity of $J_{1}$ and $J_{2}$ are used in \eqref{eqn:usej1pos} and \eqref{eqn:usej2pos}, respectively.
Now, we can prove the positivity of \eqref{eqn:d24hardernonhomo2} (i.e. $J_{3}$) as follows.
As in Lemma \ref{lem:auxineq}, we can compute the Fourier expansion of $J_{3}$ as
\begin{align*}
    J_{3} &=E_{2} E_{4} - \frac{1}{10} E_{6} - \frac{9}{10} E_{4} \\
    &= 3 E_{4}' + \frac{9}{10} E_{6} - \frac{9}{10} E_{4} \\
    &= \sum_{n \geq 1} \left(720 n \sigma_{3}(n)- \frac{2268}{5} \sigma_{5}(n) - 216 \sigma_{3}(n)\right) q^{n}  \\
    &=: \sum_{n \geq 1} a_{n}q^{n}.
\end{align*}
We have $a_{1} = \frac{252}{5} > 0$.
For $n \geq 2$,
\[
n \sigma_{3}(n) \leq n (1^{3} + 2^{3} + \cdots + n^{3}) = \frac{n^{3}(n+1)^{2}}{4} \leq \frac{9}{16} n^{5} < \frac{9}{16} \sigma_{5}(n) < \frac{2268}{720 \cdot 5} \sigma_{5}(n)
\]
and we get $a_{n} < 0$.
From this observation, the function 
\[
    t \mapsto e^{2\pi t} J_{3}(it) = a_{1} + \sum_{n\geq 2}a_{n}e^{-2\pi(n-1)t}
\]
is monotone increasing, and by \eqref{eqn:eisval}
\[
    e^{2\pi t} J_{3}(it) \geq e^{2 \pi} J_{3}(i) = e^{2\pi} \left(\frac{3}{\pi} - \frac{9}{10}\right) E_{4}(i) > 0 \Rightarrow J_{3}(it) > 0
\]
for $t \geq 1$, hence for $t > \frac{10}{3 \pi}$.
\end{proof}

\begin{remark}
The estimate $\pi < \frac{10}{3}$ is used in the proof (e.g. \eqref{eqn:usej1pos}), and this can be verified \emph{geometrically} (without calculators) by considering the area of a regular octagon circumscribed to a unit circle:
\[
    \pi < 8 \tan\left(\frac{\pi}{8}\right) = 8 (\sqrt{2} - 1) < \frac{10}{3}.
\]
\end{remark}

\begin{remark}
The inequalities \eqref{eqn:d24ineq3new2} and \eqref{eqn:d24ineq3fac} are ``homogeneous'' if one regards $\frac{1}{t} = \frac{i}{z}$ and $\frac{1}{\pi}$ as ``weight 1'' objects, which makes sense if we consider the transformation law of $E_2$ \eqref{eqn:e2trans}.
However, we had to flip it under $t \leftrightarrow \frac{1}{t}$ and prove the nonhomogeneous inequalities \eqref{eqn:d24hardernonhomo}-\eqref{eqn:d24hardernonhomo2} instead.
It would be interesting if one can prove the inequality \eqref{eqn:d24ineq3new} in a purely homogeneous way.
\end{remark}

% \newpage

\bibliographystyle{plain} % We choose the "plain" reference style
\bibliography{refs} % Entries are in the refs.bib file

@article{cohn2017sphere,
  title={{The sphere packing problem in dimension 24}},
  author={Cohn, Henry and Kumar, Abhinav and Miller, Stephen and Radchenko, Danylo and Viazovska, Maryna},
  journal={Annals of Mathematics},
  volume={185},
  number={3},
  pages={1017--1033},
  year={2017},
  publisher={Department of Mathematics, Princeton University Princeton, New Jersey, USA}
}

@article{cohn2003new,
  title={{New upper bounds on sphere packings I}},
  author={Cohn, Henry and Elkies, Noam},
  journal={Annals of mathematics},
  pages={689--714},
  year={2003},
  publisher={JSTOR}
}

@article{
romik2023viazovska,
author = {Dan Romik },
title = {On {Viazovska}'s modular form inequalities},
journal = {Proceedings of the National Academy of Sciences},
volume = {120},
number = {43},
pages = {e2304891120},
year = {2023},
doi = {10.1073/pnas.2304891120},
URL = {https://www.pnas.org/doi/abs/10.1073/pnas.2304891120},
eprint = {https://www.pnas.org/doi/pdf/10.1073/pnas.2304891120},
abstract = {Viazovska proved that the E8 lattice sphere packing is the densest sphere packing in 8 dimensions. Her proof relies on two inequalities between functions defined in terms of modular and quasimodular forms. We give a direct proof of these inequalities that does not rely on computer calculations.}}

@article{viazovska2017sphere,
  title={The sphere packing problem in dimension 8},
  author={Viazovska, Maryna},
  journal={Annals of mathematics},
  pages={991--1015},
  year={2017},
  publisher={JSTOR}
}

@article{bruinier2008elliptic,
  title={Elliptic modular forms and their applications},
  author={Bruinier, Jan Hendrik and van der Geer, Gerard and Harder, G{\"u}nter and Zagier, Don},
  journal={The 1-2-3 of modular forms: Lectures at a summer school in Nordfjordeid, Norway},
  pages={1--103},
  year={2008},
  publisher={Springer}
}

@article{kaneko2006extremal,
  title={On extremal quasimodular forms},
  author={Kaneko, Masanobu and Koike, Masao},
  journal={Kyushu Journal of Mathematics},
  volume={60},
  number={2},
  pages={457--470},
  year={2006},
  publisher={Faculty of Mathematics, Kyushu University}
}

@article{grabner2020quasimodular,
  title={Quasimodular forms as solutions of modular differential equations},
  author={Grabner, Peter J},
  journal={International Journal of Number Theory},
  volume={16},
  number={10},
  pages={2233--2274},
  year={2020},
  publisher={World Scientific}
}

@article{pellarin2020extremal,
  title={On extremal quasi-modular forms after {K}aneko and {K}oike},
  author={Pellarin, F and Nebe, G},
  journal={KYUSHU JOURNAL OF MATHEMATICS},
  volume={74},
  number={2},
  pages={401--413},
  year={2020},
  publisher={Kyushu University, Faculty of Science}
}

@article{grabner2022asymptotic,
  title={Asymptotic expansions for the coefficients of extremal quasimodular forms and a conjecture of {K}aneko and {K}oike},
  author={Grabner, Peter J},
  journal={The Ramanujan Journal},
  volume={57},
  number={3},
  pages={1021--1041},
  year={2022},
  publisher={Springer}
}

@article{jenkins2011bounds,
  title={Bounds for coefficients of cusp forms and extremal lattices},
  author={Jenkins, Paul and Rouse, Jeremy},
  journal={Bulletin of the London Mathematical Society},
  volume={43},
  number={5},
  pages={927--938},
  year={2011},
  publisher={Wiley Online Library}
}

@inproceedings{feigenbaum2021eigenfunctions,
  title={{Eigenfunctions of the Fourier Transform with specified zeros}},
  author={Feigenbaum, Ahram S and Grabner, Peter J and Hardin, Douglas P},
  booktitle={Mathematical Proceedings of the Cambridge Philosophical Society},
  volume={171},
  pages={329--367},
  year={2021},
  organization={Cambridge University Press}
}

@manual{sagemath,
  Key          = {SageMath},
  Author       = {{The Sage Developers}},
  Title        = {{S}ageMath, the {S}age {M}athematics {S}oftware {S}ystem ({V}ersion 9.8)},
  note         = {{\tt https://www.sagemath.org}},
  Year         = {2023},
}

@article{kaneko1998supersingular,
  title={{Supersingular $j$-invariants, hypergeometric series, and Atkin's orthogonal polynomials}},
  author={Kaneko, Masanobu and Zagier, Don},
  journal={AMS/IP Studies in Advanced Mathematics},
  volume={7},
  pages={97--126},
  year={1998},
  publisher={American Mathematical Seciety}
}

@book{serre2012course,
  title={A course in arithmetic},
  author={Serre, Jean-Pierre},
  volume={7},
  year={2012},
  publisher={Springer Science \& Business Media}
}

@article{nagatomo2024modular,
  title={Modular linear differential operators and generalized Rankin-Cohen brackets},
  author={Nagatomo, Kiyokazu and Sakai, Yuichi and Zagier, Don},
  journal={Transactions of the American Mathematical Society},
  volume={377},
  number={11},
  pages={7657--7698},
  year={2024}
}

@book{schilling2009bernstein,
  title={Bernstein functions: theory and applications},
  author={Schilling, Ren{\'e} L and Song, Renming and Vondra{\v{c}}ek, Zoran},
  year={2009},
  publisher={Walter de Gruyter}
}

\newpage
\section*{Appendix}
\label{sec:appendix}
\begin{appendices}

\section{Brief description of SageMath codes}
\label{subsec:appendix_sage}

All the codes are available in the GitHub repository \url{https://github.com/seewoo5/posqmf}.
Our codes are based on the current implementation of quasimodular forms, thanks to David Ayotte.
In SageMath, the ring of quasimodular forms of level $\Gamma_0(N)$ or $\Gamma_1(N)$ are essentially implemented as polynomial rings of one variable ($E_2$) with the ring of (genuine) modular forms as a coefficient ring, based on \cite{kaneko1998supersingular}.
For the quasimodular forms of level $\SL_2(\mathbb{Z})$, we can simply define the ring as \texttt{QM = QuasiModularForms(1)}.
However, for the ring of quasimodular forms of level $\Gamma(2)$, we had to implement it ourselves from scratch since the current implementation of the modular forms does not support the level.
Especially, they can have $q$-expansion with half-integral powers (e.g. \eqref{eqn:h2}), where SageMath does not support power series with fractional powers.
The ring of quasimodular forms of level $\Gamma(2)$ is isomorphic to a polynomial ring with three generators, namely $H_2 = \Theta_2^4$, $H_4 = \Theta_4^4$, and $E_2$.
So we simply define it as a polynomial ring \texttt{QM2.<H2,H4,E2\_> = QQ[`H2,H4,E2']}.
Also, for the proof of the harder inequality (Section \ref{subsec:d24harder}), we implement auxiliary rings \texttt{RQM} and \texttt{RQM2}, which correspond to 
$$
    \RQM(\Gamma) := \QM(\Gamma)\left[\frac{1}{\pi}, \frac{i}{z}\right]
$$
for $\Gamma = \SL_2(\bZ)$ and $\Gamma = \Gamma(2)$.
We also implemented (Serre-)derivatives and $S$-actions on these rings, which are used to check the intermediate steps of the computations of the limits (Proposition \ref{prop:d8limit} and \ref{prop:d24limit}) and the intermediate steps of the proof of Proposition \ref{prop:d24harder}.

The repository consists of a directory \texttt{posqmf} and a Jupyter notebook \href{https://github.com/seewoo5/posqmf/blob/main/sphere_packing_ineq.ipynb}{\texttt{sphere\_packing\_ineq.ipynb}}.
The utility files are located under the directory \href{https://github.com/seewoo5/posqmf/tree/main/posqmf/sage}{\texttt{posqmf/sage}}, including basic APIs for computing quasimodular forms.
There are four \texttt{.sage} files in total:
\begin{itemize}
    \item \texttt{extremal\_qm.sage}: It computes an extremal quasimodular form of given weight and depth. For depth 1 and 2, it uses the recurrence relations in \cite{grabner2020quasimodular} mentioned in Section \ref{sec:extremal}.
    \item \texttt{utils\_l1.sage}: It contains basic APIs for level 1 quasimodular forms, including the computation of $q$-expansions and (Serre-)derivatives.
    \item \texttt{utils\_l2.sage}: It contains basic APIs for level $\Gamma(2)$ quasimodular forms. It also provides a function \texttt{l1\_to\_l2} that converts a level 1 quasimodular form into a level $\Gamma(2)$ quasimodular form.
    \item \texttt{utils\_rqm.sage}: It contains basic APIs for the rings \texttt{RQM} and \texttt{RQM2}. The (Serre-)derivative on $\QM(\Gamma)$ is extended to $\RQM(\Gamma)$, and slash action under $S$ for homogeneous elements are implemented.
\end{itemize}
See \href{https://github.com/seewoo5/posqmf/blob/main/posqmf/README.md}{\texttt{posqmf/README.md}} for more details.

The Jupyter notebook \href{https://github.com/seewoo5/posqmf/blob/main/sphere_packing_ineq.ipynb}{\texttt{sphere\_packing\_ineq.ipynb}} consists of three parts.
The first part titled \textbf{Extremal quasimodular forms} checks the identities \eqref{eqn:dx82}-\eqref{eqn:dx142} of Proposition \ref{prop:d2extposexceptional}.
Second part \textbf{Dimension 8} checks the identities \eqref{eqn:d8ssf} and \eqref{eqn:d8ssg}, along with the identity \eqref{eqn:d8ineqfactor} used in the alternative proof of the monotonicity (Remark \ref{rem:d8proof2}) and the expansion of $F|S$ and $G|S$ for the limit \eqref{eqn:d8lim}.
The last part \textbf{Dimension 24} verifies the identities \eqref{eqn:serrederF16}, \eqref{eqn:d24ssf}, \eqref{eqn:d24ssg}, the expansion of $F|S$ and $G|S$ for the limit \eqref{eqn:d24lim}, along with some of the intermediate steps in Section \ref{subsec:d24harder}.
It takes a few seconds to run the whole notebook.

\newpage

\section{Pen-and-paper proofs of the quasimodular form identities}
\label{subsec:penpaperproof}

In this section, we give pen-and-paper proofs of the quasimodular form identities.

\subsection{Useful identities}
\label{subsec:appendix_identities}

From \eqref{eqn:serre_prod} and \eqref{eqn:serre_e2}-\eqref{eqn:serre_e6}, we have
\begin{align}
    \partial_{w}(E_2F) &= E_2 (\partial_{w-1}F) - \frac{1}{12} E_4 F, \label{eqn:serre_e2}\\
    \partial_{w}(E_4F) &= E_4 (\partial_{w-4}F) - \frac{1}{3}E_6 F, \label{eqn:serre_e4} \\
    \partial_{w}(E_6F) &= E_6 (\partial_{w-6}F) - \frac{1}{2} E_4^2 F, \label{eqn:serre_e6}
\end{align}
and
\begin{align}
    \partial_{12} \Delta &= \frac{\partial_{12}(E_4^3 - E_6^2)}{1728} = \frac{3 E_4^2 \cdot (-\frac{1}{3} E_6) - 2 E_6 \cdot (-\frac{1}{2} E_4^2)}{1728} = 0 \label{eqn:serredisc}\\
    \partial_{w} (\Delta F) &= (\partial_{12}\Delta) F + \Delta (\partial_{w-12}F) = \Delta (\partial_{w-12}F).\label{eqn:serredisc2}
\end{align}
Also, combining the product rule with the Serre derivatives of theta functions, we have the following formula:
For $a, b \geq 0$, we have
\begin{equation}
\label{eqn:thetaprodserreder}
\partial_{2a + 2b} (H_{2}^{a} H_{4}^{b}) = \frac{1}{6} H_{2}^{a}H_{4}^{b}((a-2b)H_{2} + (2a-b)H_{4}).
\end{equation}

The general ideas to prove the identities like \eqref{eqn:dx82} or \eqref{eqn:d24ssf} by hand are two ways, (1) by matching the first few $q$-coefficients and invoking the uniqueness of extremal quasimodular forms \cite[Theorem 1.3, p. 403]{pellarin2020extremal}, or (2) by expressing them in terms of Eisenstein series or theta functions and use Ramanujan's identities \eqref{eqn:e2der}-\eqref{eqn:e4der} or \eqref{eqn:thetaprodserreder}.

\subsection{Proof of the depth 2 extremal form identities \eqref{eqn:dx82}-\eqref{eqn:dx142}}
\label{subsec:appendix_xd2}

Using Lemma \ref{lem:lowpos} and the $q$-expansions in Table \ref{tab:extform}, one can check that the first dimension-many $q$-coefficients are matching, and \cite[Theorem 1.3, p. 403]{pellarin2020extremal} implies the identity.
For example, both $X_{8, 2}'$ and $2 X_{4, 2} X_{6, 1}$ have weight 10 and depth 3, where $\dim \QM_{10, 3} = 4$ \cite[eq. (2.10), p. 2237]{grabner2020quasimodular}.
Their $q$-expansions are
\begin{align*}
    X_{8, 2}' &= 2q^2 + 48q^3 + O(q^4) \\
    2 X_{4, 2} X_{6, 1} &= 2(q + 6q^2 + O(q^3))(q + 18 q^2 + O(q^3)) = 2q^2 + 48q^3 + O(q^4)
\end{align*}
hence $X_{8, 2}' - 2 X_{4, 2} X_{6, 1} = O(q^4)$.
By the uniqueness of extremal quasimodular forms \cite[Theorem 1.3]{pellarin2020extremal}, this has to be a constant multiple of $X_{10, 3} = q^3 + O(q^4)$, so is identically zero. This proves \eqref{eqn:dx82}, and the other identities can be proved similarly.

\subsection{Proof of the 8-dimensional identities \eqref{eqn:d8ssf}, \eqref{eqn:d8ssg}}
\label{subsec:appendix_d8}

The following lemma will be useful in the proofs.
\begin{lemma}
    \label{lem:auxidentity}
    We have
    \begin{equation}
        \label{eqn:auxidentity}
        \frac{49}{24} X_{8, 1}^2 - \frac{25}{24} E_4 X_{6, 1}^2 = \Delta X_{4, 2}.
    \end{equation}
\end{lemma}
\begin{proof}
    By Lemma \ref{lem:lowpos},
    \begin{align*}
        49 X_{8, 1}^2 - 25 E_4 X_{6, 1}^2 &= \frac{7^2}{12^4 \cdot 7^2}  (E_4^2 - E_2 E_6)^2 - \frac{5^2}{12^4 \cdot 5^2} E_4 (E_2 E_4 - E_6)^2 \\
        &= \frac{(E_4^3 - E_6^2)(E_4 - E_2^2)}{12^4} \\
        &= 2 \cdot 12 \cdot \Delta X_{4, 2}
    \end{align*}
    and we get the desired identity.
\end{proof}

To prove \eqref{eqn:d8ssf}, note that $F = 720^2 X_{6, 1}^2$. Since $\frac{12^3 \cdot 100}{720^2}  = \frac{1}{3}$, \eqref{eqn:d8ssf} is equivalent to
\begin{equation}
\label{eqn:d8ssf2}
\partial_{10}^2 X_{6, 1}^2 = \frac{5}{6} E_4 X_{6, 1}^2 + \frac{1}{3} \Delta X_{4, 2}.
\end{equation}
By \eqref{eqn:serre_prod}, \eqref{eqn:d1rec1}${}_{6}$ and \eqref{eqn:oded1serre}${}_{6}$, we have
\begin{align*}
    \partial_{10} X_{6, 1}^2 &= 2 X_{6, 1} \partial_{5} X_{6, 1}, \\
    \partial_{10}^2 X_{6, 1}^2 &= 2 ((\partial_{5} X_{6, 1})^2 + X_{6, 1} \partial_{5}^2 X_{6, 1}) = \frac{7}{72} (7 X_{8, 1}^2 + 5 E_4 X_{6, 1}^2),
\end{align*}
so Lemma \ref{lem:auxidentity} implies
\[
    \partial_{10}^2 X_{6, 1}^2 - \frac{5}{6} E_4 X_{6, 1}^2 = \frac{49 X_{8, 1}^2 - 25 E_4 X_{6, 1}^2}{72} = \frac{1}{3} \Delta X_{4,2}
\]
and we get \eqref{eqn:d8ssf2}.
For \eqref{eqn:d8ssg}, use \eqref{eqn:thetaprodserreder} twice to compute $\partial_{10}^2 G$, then rewriting $E_4$ and $\Delta$ using \eqref{eqn:e4theta} and \eqref{eqn:disctheta} completes a proof. \qed

\subsection{Alternative proof of Proposition \ref{prop:d8decrease}}
\label{subsec:appendix_d8newproof}
We give another proof of Proposition \ref{prop:d8decrease} as mentioned in Remark \ref{rem:d8proof2}.
\begin{lemma}
We have    
\begin{equation}
\label{eqn:d8ineqfactor}
    \cL_{1, 0} = \frac{15}{2} H_2^3 (H_2 + H_4)^2 H_4^2 (E_2 E_4 - E_6)\left(E_4 - \frac{1}{2}E_2(H_2 + 2H_4)\right).
    % \cL_{1, 0} = \frac{15}{2} H_2^3 (E_2 E_4 - E_6) \left[1728 \Delta + \right]
\end{equation}
\end{lemma}
\begin{proof}
By using \eqref{eqn:serre_prod}, \eqref{eqn:thetaprodserreder}, \eqref{eqn:e4theta} and Ramanujan's identities,
\begin{align*}
    \partial_{10}F &= \frac{5}{6} (E_2 E_4 - E_6) (E_4^2 - E_2 E_6) \\
    \partial_{10}G &= \frac{10}{6} H_2^3 (H_2^3 + 3 H_2^2 H_4 + 3 H_2 H_4^2 + 2 H_4^3)  \\
    &= \frac{10}{6} H_2^3 (H_2 + 2 H_4) (H_2^2 + H_2 H_4 + H_4^2) = \frac{5}{3} H_2^3 (H_2 + 2H_4) E_4
\end{align*}
Substituting $\partial_{10}F$ and $\partial_{10}G$ gives
\begin{align*}
    &(\partial_{10} F) G - F (\partial_{10}G) \\
    &= \frac{5}{6}H_2^3 (E_2 E_4 - E_6) \left[(E_4^2 - E_2 E_6) (2H_2^2 + 5 H_2 H_4 + 5 H_4^2) - 2 (E_2 E_4 - E_6) E_4 (H_2 + 2 H_4)\right].
\end{align*}
We claim that the third factor is
\begin{equation}
\label{eqn:L10factor}
9 H_4^2 (H_2 + H_4)^2 \left(E_4 - \frac{1}{2} E_2 (H_2 + 2H_4)\right).
\end{equation}
To prove this, we regroup the factor according to depth, and write it as a form of $A + B E_2$ where
\begin{align*}
    A &= E_4^2 (2H_2^2 + 5 H_2 H_4 + 5 H_4^2) + 2 E_4 E_6 (H_2 + 2 H_4) \\
    B &= - E_6 (2H_2^2 + 5 H_2 H_4 + 5 H_4^2) - 2 E_4^2 (H_2 + 2 H_4).
\end{align*}
Let $J := H_2 + 2 H_4$. We will use the following identities for simpler computations: from \eqref{eqn:e4theta} and \eqref{eqn:e6theta},
\begin{align*}
    2 H_2^2 + 5 H_2 H_4 + 5 H_4^2 &= H_2^2 + H_2 H_4 + H_4^2 + (H_2 + 2H_4)^2 = E_4 + J^2, \\
    E_4 - J^2 &= H_2^2 + H_2 H_4 + H_4^2 - (H_2 + 2H_4)^2 =-3 H_4 (H_2 + H_4) \\
    E_6 
    &= \frac{1}{2} J (-2H_2^2 + H_2 H_4 + H_4^2) = \frac{1}{2} J(-2E_4 + 3 H_4(H_2 + H_4))
\end{align*}
Now, $A$ is
\begin{align*}
    E_4^2 (2 H_2^2 + 5 H_2 H_4 + 5 H_4^2) + 2 E_4 E_6 (H_2 + 2 H_4)
    &= E_4^2 (E_4 + J^2) + J^2 E_4 (-2E_4 + 3 H_4 (H_2 + H_4)) \\
    &= E_4 (E_4 (E_4 - J^2) + 3 J^2 H_4 (H_2 + H_4)) \\
    &= 3 E_4 H_4 (H_2 + H_4) (-E_4 + J^2) = 9 E_4 H_4^2 (H_2 + H_4)^2
\end{align*}
and $B$ is
\begin{align*}
    - E_6 (2H_2^2 + 5 H_2 H_4 + 5 H_4^2) - 2 E_4^2 (H_2 + 2 H_4) &= \frac{1}{2}J(2E_4 - 3 H_4 (H_2 + H_4)) (E_4 + J^2) - 2 E_4^2 J \\
    &= \frac{J}{2} (-2 E_4^2 + 2 E_4 J^2 - 3 E_4 H_4 (H_2 + H_4) - 3 J^2 H_4 (H_2 + H_4)) \\
    &= \frac{J}{2} (6 E_4 H_4 (H_2 + H_4) - 3 E_4 H_4 (H_2 + H_4) - 3 J^2 H_4 (H_2 + H_4)) \\
    &= \frac{3J}{2} H_4 (H_2 + H_4) (E_4 - J^2) \\
    &= \frac{9J}{2}H_4^2 (H_2 + H_4)^2
\end{align*}
and the identity follows.
\end{proof}
Notice that the first four factors of \eqref{eqn:d8ineqfactor} are all positive, where the fourth factor is $E_2 E_4 - E_6 = 3 E_4' = 720 X_{6, 1}$.
Hence the positivity of \eqref{eqn:d8ineqfactor} reduces to that of the last factor.
\begin{lemma}
$L := E_4 - \frac{1}{2} E_2 (H_2 + 2 H_4)$ is positive.
\end{lemma}
\begin{proof}
With \eqref{eqn:e4theta}, it can be decomposed as
\[
E_4 - \frac{1}{2} E_2 (H_2 + 2H_4) = \frac{3}{4} H_2^2 + \frac{1}{4} (H_2 + 2 H_4) (H_2 + 2 H_4 - 2 E_2)
\]
and the second term of the last factor is positive, since (from \eqref{eqn:h2} and \eqref{eqn:h4})
\[
H_2 + 2 H_4 - 2 E_2 = (H_2 + 2 H_4 - 2) + 2 (1 - E_2) = 2 \sum_{n \ge 1} r_4(2n) q^n + 48 \sum_{n \ge 1} \sigma_1(n) q^n.
\]
\end{proof}

\subsection{Proof of the 24-dimensional identities \eqref{eqn:serrederF16}, \eqref{eqn:d24ssf}, \eqref{eqn:d24ssg}}
\label{subsec:appendix_d24}

The following lemma will be useful.
\begin{lemma}
    \label{lem:F16identity}
    We have
    \begin{equation}
    \label{eqn:F16eq1}
        F = 2 \cdot 7^2 \cdot 12^5 (X_{8, 1}^2 - \Delta X_{4, 2}).
    \end{equation}
\end{lemma}
\begin{proof}
    We can rewrite $F$ as
    \begin{align*}
        F &= 49 E_2^2 E_4^3 - 25 E_2^2 E_6^2 - 48 E_2 E_4^2 E_6 - 25 E_4^4 + 49 E_4 E_6^2 \\
        &= 49 (E_2^2 - E_4) (E_4^3 - E_6^2) + 24 (E_4^2 - E_2 E_6)^2
    \end{align*}
    Now apply Lemma \ref{lem:lowpos} and $E_4^3 - E_6^2 = 12^3 \Delta$ to conclude that
    \[
        F = 7^2 \cdot (-2 \cdot 12^2 X_{4, 2}) (12^3 \Delta) + 2 \cdot 12 \cdot 7^2 \cdot 12^4 X_{8, 1}^2 = 2 \cdot 7^2 \cdot 12^5 (X_{8, 1}^2 - \Delta X_{4, 2}).
    \]
\end{proof}

\begin{proof}[Proof of \eqref{eqn:serrederF16}]
From $6706022400 = 7^2 12^5 \cdot 2 \cdot 5^2 \cdot 11$ and Lemma \ref{lem:F16identity}, \eqref{eqn:serrederF16} is equivalent to
\begin{equation}
    \label{eqn:serrederF16_2}
    \partial_{14}(X_{8, 1}^2 - \Delta X_{4, 2}) = 5^2 \cdot 11 X_{6, 1} X_{12, 1}.
\end{equation}
Since both of sides of \eqref{eqn:serrederF16_2} have weight 18 and depth $\le 2$ (note that $14 = 16 - 2$, so the left hand side has depth $\le 2$ by \cite[Proposition 3.3, p. 467]{kaneko2006extremal}), $\dim \QM_{18, 2} = 5$ \cite[eq. (2.10), p. 2237]{grabner2020quasimodular} and \cite[Theorem 1.3, p. 403]{pellarin2020extremal} imply that we only need to verify that the first 5 $q$-coefficients matches.
From \eqref{eqn:F16eq1} and Lemma \ref{lem:lowpos},
\begin{align}
    &X_{8, 1}^2 - X_{4, 2} \Delta \nonumber \\
    &= \left(\sum_{n \ge 1} n \sigma_5(n) q^n\right)^2 - \left(\sum_{n \ge 1} n \sigma_1(n) q^n\right) q \prod_{n \ge 1} (1 - q^n)^{24} \nonumber\\
    &=  (q + 66 q^2 + 732 q^3 + O(q^4))^2 - (q + 6 q^2 + 12 q^3 + O(q^4))(q - 24 q^2 + 252 q^3 + O(q^4)) \nonumber \\
    &= 150 (q^3 + 38 q^4 + O(q^5)) \label{eqn:F16qexp}
\end{align}
and by \eqref{eqn:serrederqexp}, we have
\begin{align*}
    &\partial_{14} (X_{8, 1}^2 - X_{4, 2} \Delta) \\
    &= 150 \left[(3q^3 +152q^4 + O(q^5)) - \frac{7}{6} (1 - 24q - 72q^2 + O(q^3))(q^3 + 38q^4 + O(q^5))\right] \\
    &= 275 q^3 + 20350 q^4 + O(q^5).
\end{align*}
For the right hand side of \eqref{eqn:serrederF16_2}, \eqref{eqn:d1rec3} gives
\begin{equation}
    \label{eqn:x121}
    X_{12, 1} = \frac{12}{864 \cdot 11} (E_4 X_{8, 1} - E_6 X_{6, 1}) = \frac{1}{72 \cdot 11} (E_4 X_{8, 1} - E_6 X_{6, 1})
\end{equation}
and by Lemma \ref{lem:lowpos},
\begin{align*}
    5^2 \cdot 11 X_{6, 1} X_{12, 1} &= \frac{5^2 \cdot 11}{72 \cdot 11} \left(\sum_{n \ge 1} n\sigma_3(n)q^n\right) \left[E_4 \left(\sum_{n \ge 1} n \sigma_5(n) q^n\right) - E_6 \left(\sum_{n \ge 1} n \sigma_3(n) q^n\right)\right] \\
    &= \frac{5^2}{72} (q + 18 q^2 + O(q^3)) \times [(1 + 240q + 2160 q^2 + O(q^3))(q + 66 q^2 + 732 q^3 + O(q^4)) \\
    &\quad - (1 - 504q - 16632 q^2 + O(q^3))(q + 18q^2 + 84q^3 + O(q^4))] \\
    &= 275 q^3 + 20350 q^4 + O(q^5).
\end{align*}
This shows $\partial_{14}(X_{8, 1}^2 - X_{4, 2}\Delta) - 5^2 \cdot 11 X_{6, 1} X_{12, 1} = O(q^5)$.
The difference has to be a constant multiple of $X_{18, 2} = q^4 + O(q^5)$, hence zero.
Note that \eqref{eqn:F16qexp} also shows that the first nonzero $q$-coefficient of $F$ is positive, hence $F(it) > 0$ for sufficiently large $t > 0$.
\end{proof}

\begin{proof}[Proof of \eqref{eqn:d24ssf} and \eqref{eqn:d24ssg}]
To show \eqref{eqn:d24ssf}, Lemma \ref{lem:F16identity} and $\frac{2^{11} \cdot 3^7 \cdot 5^2 \cdot 7^2}{2 \cdot 7^2 \cdot 12^5} = 225$ shows that \eqref{eqn:d24ssf} is equivalent to
\[
\partial_{14}^2 (X_{8, 1}^2 - \Delta X_{4, 2}) = \frac{14}{9} E_4 (X_{8, 1}^2 - \Delta X_{4, 2}) + 225 \Delta X_{8, 2}.
\]
By the product rule \eqref{eqn:serre_prod} and \eqref{eqn:serredisc2},
\begin{align*}
    \partial_{14}(X_{8, 1}^2 - \Delta X_{4, 2}) &= 2 X_{8, 1} \cdot \partial_{7}X_{8, 1} - \Delta \partial_{2} X_{4, 2} = \frac{5}{6} E_4 X_{6,1} X_{8, 1} - \Delta \partial_2 X_{4, 2} \\
    \partial_{14}^{2}(X_{8, 1}^2 - \Delta X_{4, 2}) &= -\frac{5}{18} E_6 X_{6, 1} X_{8, 1} + \frac{35}{72} E_4 X_{8, 1}^2 + \frac{25}{72} E_4^2 X_{6, 1}^2 - \Delta\partial_{2}^2 X_{4, 2} \\
    &= -\frac{5}{18} E_6 X_{6, 1} X_{8, 1} + \frac{35}{72} E_4 X_{8, 1}^2 + \frac{25}{72} E_4^2 X_{6, 1}^2 -\Delta  \left(\frac{5}{9} E_4 X_{4, 2} - 105 X_{8, 2}\right)
\end{align*}
where \eqref{eqn:d1rec1}${}_{6}$, \eqref{eqn:d1rec4}${}_{6}$, \eqref{eqn:d2eq1}${}_{4}$ are used in the last two equalities. By Lemma \ref{lem:auxidentity}, 
\begin{align*}
    \partial_{14}^2 (X_{8, 1}^2 - \Delta X_{4, 2}) - \frac{14}{9} E_4 (X_{8, 1}^2 - \Delta X_{4, 2}) &= \frac{35}{36} E_4 X_{8, 1}^2 - \frac{5}{18} E_6 X_{6, 1} X_{8, 1} -\frac{25}{36} E_4^2 X_{6, 1}^2 + 105 \Delta X_{8, 2}
\end{align*}
so it is enough to prove
\[
\frac{35}{36} E_4 X_{8, 1}^2 - \frac{5}{18} E_6 X_{6, 1} X_{8, 1} -\frac{25}{36} E_4^2 X_{6, 1}^2 = \frac{5}{36} (7 E_4 X_{8, 1}^2 - 2 E_6 X_{6, 1} X_{8,1 } - 5 E_4^2 X_{6, 1}^2) = 120 \Delta X_{8, 2}.
\]
From Lemma \ref{lem:lowpos} and $720 = 5 \cdot 12^2, 1008 = 7 \cdot 12^2$,
\begin{align*}
    &7 E_4 X_{8, 1}^2 - 2 E_6 X_{6, 1} X_{8, 1} - 5 E_4^2 X_{6, 1}^2 &\\
    &= \frac{1}{5 \cdot 7 \cdot 12^4} (5 E_4(- E_2 E_6 + E_4^2)^2 - 2 E_6 (E_2 E_4 - E_6) (-E_2 E_6 + E_4^2) - 7 E_4^2 (E_2 E_4 - E_6)^2) \\
    &= \frac{1}{5 \cdot 7 \cdot 12^4} (-7E_2^2 E_4 + 2 E_2 E_6 + 5 E_4^2) (E_4^3 - E_6^2) \\
    &= \Delta \cdot \frac{-7E_2^2 E_4 + 2 E_2 E_6 + 5 E_4^2}{5 \cdot 7 \cdot 12} \\
    &= \Delta \cdot 864 X_{8, 2}
\end{align*}
(see Table \ref{tab:extform}, where $\frac{362880}{5 \cdot 7 \cdot 12} = 864$), and $\frac{5 \cdot 864}{36} = 120$ proves the desired identity.

The identity \eqref{eqn:d24ssg} can be proved similarly as \eqref{eqn:d8ssg}. By applying \eqref{eqn:thetaprodserreder} twice, we can compute $\partial_{14}^2 G$ as
\begin{align*}
\partial_{14}^2 G &= \frac{14}{9} H_2^5 (2H_2^4 + 9 H_2^3 H_4 + 16 H_2^2 H_4^2 + 14 H_2 H_4^3 + 7 H_4^4) \\
&= \frac{14}{9} (H_2^2 + H_2 H_4 + H_4^2) \cdot H_2^5(2 H_2^2 + 7 H_2 H_4 + 7 H_4^2),
\end{align*}
and the last equation is equal to $\frac{14}{9} E_4 G$ by \eqref{eqn:e4theta}.
\end{proof}

\newpage
\section{Table of extremal forms}
\label{subsec:exttab}

\renewcommand*{\arraystretch}{2}

Table \ref{tab:extform} gives first few extremal quasimodular forms of Kaneko and Koike \cite{kaneko2006extremal}.

\begin{table}[h]
\begin{center}
    \begin{tabular}{c|c|l}
        \toprule
        $s$ & $w$ & $X_{w, s}$ \\
        \midrule
        \multirow{5}{*}{$1$} & $6$ & $\dfrac{E_2 E_4 - E_6}{720} = q + 18 q^2 + 84 q^3 + 292 q^4 + 630 q^5 + \cdots$ \\
            & $8$ & $\dfrac{-E_2 E_6 + E_4^2}{1008} = q + 66q^2 + 732q^3 + 4228q^4 + 15630q^5 + \cdots$ \\
            & $10$ & $\dfrac{E_2 E_4^2 - E_4 E_6}{720} = q + 258q^2 + 6564q^3 + 66052q^4 + 390630q^5 + \cdots$ \\
            & $12$ & $\dfrac{-12 E_2 E_4 E_6 + 5E_4^3 + 7E_6^2}{3991680} = q^2 + 56q^3 + 1002q^4 + 9296q^5 + 57708q^6 + \cdots$\\
            & $14$ & $\dfrac{7 E_2 E_4^3 + 5 E_2 E_6^2 - 12 E_4^2 E_6}{4717440} = q^2 + 128 q^3 + 4050 q^4 + 58880 q^5 + 525300 q^6 + \cdots$\\
        \midrule
        \multirow{5}{*}{$2$} & $4$ & $\dfrac{-E_2^2 + E_4}{288} = q + 6q^2 + 12q^3 + 28q^4 + 30q^5 + \cdots$ \\
            & $8$ & $\dfrac{-7 E_2^2 E_4 + 2 E_2 E_6 + 5 E_4^2}{362880} = q^2 + 16 q^3 + 102 q^4 + 416 q^5 + 1308 q^6 + \cdots$\\
            & $10$ & $\dfrac{5 E_2^2 E_6 + 2 E_2 E_4^2 - 7 E_4 E_6}{1088640} = q^2 + \dfrac{104}{3}q^3 + 390q^4 + 2480q^5 + 11140 q^6 + \cdots$\\
            & $12$ & $\dfrac{-77 E_2^2 E_4^2 + 34 E_2 E_4 E_6 + 50 E_4^3 - 7 E_6^2}{798336000} = q^3 + \dfrac{51}{2}q^4 + \dfrac{1422}{5}q^5 + 1944q^6 + 9714q^7 + \cdots$\\
            & $14$ & $\dfrac{13 E_2^2 E_4 E_6 + E_2 E_4^3 - 3 E_2 E_6^2 - 11 E_4^2 E_6}{415134720} = q^3 + \dfrac{93}{2}q^4 + 810q^5 + 8004q^6 + 54474q^7 + \cdots$ \\
        \bottomrule
    \end{tabular}
    \caption{Extremal forms of depth $\leq 2$ and weight $\leq 14$.}
    \label{tab:extform}
\end{center}
\end{table}

\end{appendices}

\end{document}